\documentclass[12pt,a4paper]{article}
\usepackage[a4paper,top=25mm,bottom=25mm,left=25mm,right=25mm]{geometry}
\RequirePackage{iftex} 
\ifXeTeX
\else
  \errmessage{This document must be compiled with XeLaTeX}
\fi
\usepackage{mathtools}
\usepackage{amsmath,amssymb,amsthm,mathrsfs}
\usepackage{microtype}
\usepackage{enumitem}
\usepackage{booktabs}
\usepackage{array,xcolor}
\usepackage{hyperref}
\usepackage[backend=biber,style=alphabetic,sorting=nyt,doi=false,url=false,isbn=false,giveninits=true]{biblatex}
\hypersetup{
  hidelinks,
  bookmarksopen=true,
  bookmarksopenlevel=2,
  bookmarksnumbered=true,
}

\allowdisplaybreaks
\numberwithin{equation}{section}
\setlist[enumerate]{itemsep=3pt,topsep=4pt}
\setlist[itemize]{itemsep=3pt,topsep=4pt}

\usepackage{amsthm}
\numberwithin{equation}{section} 

\newtheorem{theorem}{Theorem}[section]
\newtheorem{lemma}[theorem]{Lemma}
\newtheorem{proposition}[theorem]{Proposition}
\newtheorem{corollary}[theorem]{Corollary}

\theoremstyle{definition}
\newtheorem{definition}[theorem]{Definition}

\newtheorem{remark}[theorem]{Remark}

\newcommand{\G}{\mathbb G}
\newcommand{\Lop}{\mathcal L}
\newcommand{\E}{\mathcal E}
\newcommand{\K}{\mathcal K}
\newcommand{\Rvec}{\mathbf R}
\newcommand{\gradH}{\nabla_{H}}
\newcommand{\1}{\mathbf 1}

\title{\Large\bf A $p = 2$ dichotomy for uniform Riesz transform bounds on stratified Lie groups}
\author{Sheng-Chen Mao \and Yaojun Wang \and Ye Zhang}
\date{}

\begin{document}

\maketitle
\thispagestyle{plain}
\pagestyle{plain}

\begin{abstract}
Let \(\G\) be a stratified Lie group and \(\Lop\) be its
sub-Laplacian. We prove that the full horizontal Riesz transform
$\gradH\Lop^{-1/2}$
is of weak type \((1,1)\) on real-valued functions, with constant at
most \(2\). In particular, the constant is independent of the horizontal dimension,
the homogeneous dimension, the step, and the underlying group structure  of \(\G\). Our result provides a noncommutative generalization of the dimension-free
Euclidean theorem of Ouyang, Spector, and Stockdale \cite{OSS26}, with
the same universal constant. Our proof relies upon a fractional obstacle problem adapted to stratified Lie groups by using the functional calculus of $\Lop$ instead of the Fourier transform.
As a consequence, by interpolation we obtain uniform $L^p$ bounds for the full horizontal Riesz transform $\gradH\Lop^{-1/2}$ for $p \in (1,2]$. 
By contrast, for every $p > 2$, we construct a sequence of stratified Lie groups with fixed horizontal dimension $5$ and steps tending to infinity for which the $L^p$ norms of the horizontal Riesz transforms diverge.
\end{abstract}

\medskip
\noindent\textbf{Keywords:} Riesz transform; Stratified Lie group; Obstacle problem; Dirichlet form

\smallskip
\noindent\textbf{MSC2020:} Primary 42B20, 22E30; Secondary 43A80, 35R11.

\medskip

\section{Introduction}

Standard Calder\'on--Zygmund proofs of endpoint bounds for singular integrals on homogeneous groups inherit constants from the doubling property and therefore from the homogeneous dimension. However, the Riesz transform has an additional structure that is not visible when its components are treated separately: the components combine into an exact $L^2$ isometry. We show that this identity, coupled with an obstacle decomposition that preserves mass, yields a weak type $(1,1)$ constant at most $2$ on every stratified Lie group, independently of the horizontal dimension, homogeneous dimension, step, and group structure.

\medskip

Recall that a connected and simply connected Lie group $\G$ is called a stratified Lie group if its Lie algebra admits the following stratification:
\begin{align}\label{defdeV}
\mathfrak g=V_1\oplus\cdots\oplus V_s,
\qquad [V_1,V_k]=V_{k+1}\quad (1\leq k<s),
\qquad [V_1,V_s]=\{0\}.
\end{align}
The number $s$ is called the step of the stratified Lie group $\G$.
We use $m$ to denote the dimension of $V_1$, which we will also call horizontal dimension.  Fix an inner product on $V_1$, choose an orthonormal basis $X_1,\ldots,X_m$, and identify these vectors with the corresponding left-invariant vector fields. We write
\[
\gradH f=(X_1f,\ldots,X_mf),
\qquad
\Lop=-\sum_{j=1}^{m}X_j^2.
\]
Here $\gradH$ and $\Lop$ are called the horizontal gradient and the sub-Laplacian respectively. Recall that 
$\Lop$ is a nonnegative self-adjoint operator on $L^2(\G)$, where integration is taken with respect to Haar measure. The horizontal vector Riesz transform is
\[
\Rvec f= \Rvec^\G f = \gradH\Lop^{-1/2}f.
\]
Here and in the following when the underlying space is clear, we will drop the superscript $\G$.
The components of $\Rvec$ are classical Calder\'on--Zygmund operators on $\G$; the homogeneous group theory of Folland and Stein \cite{FS82} therefore gives weak type $(1,1)$ estimates with constants that depend on the group. More generally, Alexopoulos \cite{Al92} proved $L^p$ bounds and weak type $(1,1)$ estimates for Riesz transforms on connected Lie groups of polynomial growth. Those qualitative results do not assert uniformity as the underlying group varies.

In Euclidean space, Stein \cite{Ste83, St86} initiated the study of estimates that remain uniform as the dimension tends to infinity and asked whether the weak type constants of the Riesz transforms could be chosen independently of the dimension. Janakiraman \cite{Jan04} obtained an $O(\log n)$ bound. Spector and Stockdale \cite{SS21} subsequently introduced a decomposition based on Dirac masses, and recently Ouyang, Spector, and Stockdale \cite{OSS26}  proved a dimension-free bound with constant at most $2$ for the full vector Riesz transform.

Quantitative estimates of dimension-free type for Riesz transforms on noncommutative models have also received considerable attention. Coulhon, M\"uller, and Zienkiewicz \cite{CMZ96} studied uniform $L^p$ estimates on Heisenberg groups. Lust-Piquard \cite{LP04} obtained similar estimates on groups of Heisenberg-type (H-type for short) and for the CCR heat flow, and then Barbas \cite{Bar10} investigated a better bound in the former case. Bañuelos and Os\k{e}kowski \cite{BO15} used martingale inequalities to derive sharp strong type and related endpoint estimates in geometric settings that include compact Lie groups. These results either address a more specialized class of groups or do not give the universal weak type estimate for the full horizontal vector established here on every stratified group.

\medskip

Our first theorem is the endpoint estimate for the full horizontal Riesz transform.

\begin{theorem}\label{mthm}
Let $\G$ be a stratified Lie group. The operator
$\Rvec=\gradH\Lop^{-1/2}$ extends uniquely to a continuous linear operator from
real-valued $L^1(\G)$ to $L^{1,\infty}(\G;\mathbb R^m)$, and
\begin{equation}\label{wtyp}
 \|\Rvec f\|_{L^{1,\infty}(\G;\mathbb R^m)}
 :=\sup_{\lambda>0}\lambda
 \big|\{x\in\G:|\Rvec f(x)|>\lambda\}\big|
 \leq 2\|f\|_{L^1(\G)}.
\end{equation}
Here $|\cdot|$ (factoring on sets) denotes the Haar measure, and the constant $2$ is independent of $m$, the homogeneous dimension, the step, and the group structure of $\G$.
\end{theorem}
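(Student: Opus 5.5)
The plan is to adapt the mass-preserving obstacle decomposition of \cite{OSS26}, replacing Fourier multipliers by the spectral calculus of $\Lop$. Write $\Lambda=\Lop^{1/2}$ and $\E(w)=\|\Lop^{1/4}w\|_2^2$; this is a Dirichlet form, because the heat semigroup $e^{-t\Lop}$ is positivity preserving and sub-Markovian on $\G$. The only $L^2$ input is the exact isometry $\|\Rvec h\|_{L^2(\G;\mathbb R^m)}^2=\sum_j\|X_j\Lop^{-1/2}h\|_2^2=\langle \Lop\Lop^{-1/2}h,\Lop^{-1/2}h\rangle=\|h\|_2^2$. No Calder\'on--Zygmund machinery is used, so nothing depends on $m$, the homogeneous dimension or the step.

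Fix $\lambda>0$ and real-valued $f\in L^1\cap L^2$ (extension by density follows once \eqref{wtyp} is proved there). Put $u=\Lambda^{-1}f$. I will look for a splitting $f=g+b$ with $g=\Lambda w$ and $b=\Lambda v$, where $v=u-w$, satisfying:
\begin{enumerate}
\item[(i)] $|g|\le\lambda$ a.e.;
\item[(ii)] $\|g\|_1\le\|f\|_1$ (mass preservation);
\item[(iii)] $\{v\neq0\}\subset\{|g|=\lambda\}$ up to null sets.
\end{enumerate}
Granting these, $\Rvec b=\gradH\Lop^{-1/2}\Lambda v=\gradH v$, and this vanishes a.e. on $\{v=0\}$ by the locality of the horizontal gradient on level sets of horizontal Sobolev functions. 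Hence
\[
|\{|\Rvec f|>\lambda\}|\le|\{v\neq0\}|+|\{|\Rvec g|>\lambda\}|
\le\frac{\|g\|_1}{\lambda}+\frac{\|g\|_2^2}{\lambda^2}
\le\frac{2\|f\|_1}{\lambda}.
\]
The middle bound uses Chebyshev with (iii) for the first term, and Chebyshev with the isometry for the second. The last bound uses $\|g\|_2^2\le\lambda\|g\|_1$ from (i), together with (ii). This already gives the constant $2$ for signed $f$, with no need to split into positive and negative parts.

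To construct $w$, I will solve a two-sided (thin) obstacle problem. Minimize
\[
J(w)=\tfrac12\E(w)-\langle f,w\rangle+\lambda\|\ldots\|
\]
in the form that is natural for the penalized problem
\[
\min_w\ \tfrac12\E(u-w)+\lambda\|u-w\|_{L^1}\quad\text{over } v=u-w .
\]
Equivalently, minimize $\tfrac12\E(v)-\langle f,v\rangle+\lambda\|v\|_1$ over $v$ in the form domain intersected with $L^1$; this is coercive and strictly convex. The Euler--Lagrange inclusion $f-\Lambda v\in\lambda\,\partial\|v\|_1$ gives exactly $g=f-\Lambda v\in[-\lambda,\lambda]$ and $g=\lambda\operatorname{sgn}v$ on $\{v\neq0\}$, which are (i) and (iii). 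For (ii) I will use the Markov property of $\E$: testing the variational inequality against normal contractions of $v$ should give $\int|g|\le\int|f|$. One concrete route is to truncate $\operatorname{sgn} g$ via $\psi_\epsilon(v)$ and use $\langle\Lambda v,\psi(v)\rangle\ge0$ for monotone $\psi$ (Beurling--Deny). Alternatively, $g$ is the image of $f$ under an $L^1$-contraction, being a resolvent-type map of an accretive operator in $L^1$.

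I expect the main obstacle to be the rigorous functional-analytic setup on $\G$. This means making sense of the minimization in the homogeneous form domain where $u=\Lambda^{-1}f$ may fail to be in $L^2$, and proving mass preservation (ii) from the sub-Markovian property alone, without Fourier transform. I would first try to regularize: work with $\Lop+\epsilon$, solve there, derive (i)--(iii) uniformly in $\epsilon$, and pass to the limit $\epsilon\to0$ using weak compactness and lower semicontinuity.
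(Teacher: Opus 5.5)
Your reduction is sound: given (i)--(iii) and $v\in D(\Lop^{1/2})$, the two Chebyshev bounds give $2\|f\|_1/\lambda$. For the signed $L^1$-penalized problem, (i) and (iii) also come out easily. The one-sided derivatives of $t\mapsto\|v+t\phi\|_1$ give $|\langle f,\phi\rangle-\E(v,\phi)|\le\lambda\|\phi\|_1$, hence $g\in L^\infty$, $|g|\le\lambda$, and $g=\lambda\operatorname{sgn}v$ on $\{v\neq0\}$.

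\textbf{The gap is (ii), and it carries weight twice.} Your concrete route does not prove it. Testing with $\psi_\epsilon(v)$ and using $\E(v,\psi_\epsilon(v))\ge0$ gives, as $\epsilon\to0$, only
\[
\lambda|\{v\neq0\}|\le\int_{\{v\neq0\}}f\operatorname{sgn}v\le\|f\|_1 .
\]
The reason is that $\psi_\epsilon(v)$ vanishes on the contact set $\{v=0\}$, which is exactly where $g$ is constrained only by $|g|\le\lambda$. Since $\Lambda$ is nonlocal, $\Lambda v\neq0$ on $\{v=0\}$ in general. So you cannot conclude $g=f$ there, and $\int_{\{v=0\}}|g|$ is not controlled. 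This handles your first Chebyshev term but not $\|g\|_2^2\le\lambda\|f\|_1$.

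Worse, at this stage you only know $g\in L^\infty$. Without $g\in L^1$ (hence $g\in L^2$) you cannot conclude $\Lambda v=f-g\in L^2$, i.e.\ $v\in D(\Lop^{1/2})=W^{1,2}_H$. You need that both for $\Rvec b=\gradH v$ and for the truncation lemma on $\{v=0\}$.

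The accretive-operator alternative is the right kind of statement: a Brezis--Strauss bound for $\Lambda v+\lambda\operatorname{Sgn}(v)\ni f$. But its proof tests against a selection $\sigma\in\operatorname{Sgn}(v)$ equal to $\operatorname{sgn}g$ on $\{v=0\}$. Such a $\sigma$ is discontinuous, lies outside the form domain, and is not known to be in $L^2$. So $\langle\Lambda v,\sigma\rangle$ has no meaning until you already know what you are trying to prove. An approximation scheme for this is the missing piece, not a routine detail.

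\textbf{How the paper avoids this.} It splits $f=f^+-f^-$, which costs nothing since $\|f^+\|_1+\|f^-\|_1=\|f\|_1$ and the constant is still $2$. It then solves the one-sided problem over $\K_1=\{v\ge0\}$.
\begin{itemize}
\item The Lewy--Stampacchia argument, testing with $(u-\varepsilon\phi)^+$, gives $0\le\eta\le(\lambda-f)^+$. Hence the reaction satisfies $\mu=\lambda-\eta\ge\min\{f,\lambda\}\ge0$.
\item Test the weak equation with dilated cutoffs $\chi_R=\chi\circ\delta_{R^{-1}}$. By homogeneity, $|\E(u,\chi_R)|\le\|u\|_1\|\Lop^{1/2}\chi_R\|_\infty\le CR^{-1}\|u\|_1$, so $\int\mu\chi_R\to\int f$.
\item Positivity of $\mu$ lets Fatou upgrade this to $\mu\in L^1$ with $\int\mu=\int f$.
\item Only then is $\mu\in L^1\cap L^\infty\subset L^2$. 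This yields $u\in D(\Lop^{1/2})$ and the complementarity $\mu=\lambda$ on $\{u>0\}$.
\end{itemize}
No $\Lop+\epsilon$ regularization is needed. Coercivity on $\mathsf V_1\cap L^1$ comes from the Nash inequality, whose group-dependent constant affects only existence, not the final bound.

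Separately, your displayed penalized problem should read $\min_w\frac12\E(w)+\lambda\|u-w\|_1$; as written it is minimized at $w=u$.
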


In the abelian case $\G=\mathbb R^m$, Theorem~\ref{mthm} recovers the estimate of Ouyang, Spector, and Stockdale \cite{OSS26}.  While in the noncommutative case,  it yields the same constant on, for instance, Heisenberg and H-type groups, filiform Carnot groups (including the Engel group), and free Carnot groups of arbitrary rank and step. As a result, every component $X_j\Lop^{-1/2}$ therefore satisfies the same estimate.

Our proof of Theorem \ref{mthm} adapts the strategy of Ouyang, Spector, and Stockdale \cite{OSS26} to the noncommutative setting. To be more precise, for $0<\alpha\le2$, we have the decomposition in Theorem \ref{dcmp}. However, the passage is not formal: a general stratified group has neither a simple Fourier multiplier representation nor the rotational symmetry available in Euclidean space. We instead use the functional calculus of the sub-Laplacian $\Lop$ to recover this approach. From now on we will use the notation $D(\mathcal{A})$ to denote the domain of an unbounded operator $\mathcal{A}$. 

\begin{theorem}\label{dcmp}
Let $\lambda>0$ and $0<\alpha\le2$. For every nonnegative $f\in L^1(\G)\cap L^2(\G)$, there are nonnegative functions
$$
\mu\in L^1(\G)\cap L^\infty(\G),
 \qquad
 u\in L^1(\G)\cap D(\Lop^{\alpha/2})
$$
such that
\begin{equation}\label{deco}
 f=\mu+\Lop^{\alpha/2}u,
\end{equation}
and the following assertions hold:
\begin{enumerate}[label=\textup{(\roman*)}]
\item $\min\{f,\lambda\}\leq\mu\leq\lambda$ almost everywhere and
$\|\mu\|_{L^1(\G)}=\|f\|_{L^1(\G)}$;
\item $\mu=\lambda$ almost everywhere on
$\Omega=\{x\in\G:u(x)>0\}$;
\item $\lambda|\Omega|\leq\|f\|_{L^1(\G)}$.
\end{enumerate}
Finally, if $\alpha\geq1$, then $u\in D(\Lop^{1/2})$ and
$\gradH u=0$ almost everywhere on $\G\setminus\Omega$.
\end{theorem}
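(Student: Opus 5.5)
We would obtain the decomposition from a fractional obstacle problem solved variationally in the energy space of the Dirichlet form $\E_\alpha(v,v)=\|\Lop^{\alpha/4}v\|_{L^2}^2$. Since $\Lop$ has no Fourier multiplier representation, everything is phrased through the spectral calculus of $\Lop$ and the Markov (sub-Markovian, symmetric) property of the heat semigroup $e^{-t\Lop}$. For $0<\alpha\le2$, the semigroup $e^{-t\Lop^{\alpha/2}}$ is again symmetric and sub-Markovian by Bochner subordination, so $\E_\alpha$ is a Dirichlet form. In particular, normal contractions such as $v\mapsto v^+$ and $v\mapsto\min\{v,c\}$ operate on it.

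\textbf{Step 1: the obstacle problem.} Set $g=(f-\lambda)^+\in L^1\cap L^2$. We want $u\ge0$ solving the complementarity system
\[
u\ge0,\qquad \Lop^{\alpha/2}u\le g,\qquad \Lop^{\alpha/2}u=f-\lambda\ \text{on }\{u>0\}.
\]
Define $\mu:=f-\Lop^{\alpha/2}u$. Then $\mu=\lambda$ on $\Omega$, which gives (ii). Off $\Omega$, $u$ vanishes and attains its minimum there, so $\Lop^{\alpha/2}u\le0$ in the sense of the maximum principle for nonlocal (or, when $\alpha=2$, subelliptic) operators. This yields $\mu\ge f\ge\min\{f,\lambda\}$, and combining with the upper bound $\Lop^{\alpha/2}u\ge f-\lambda$ gives $\mu\le\lambda$ off $\Omega$.

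To make this rigorous, we would first work with a penalized or truncated problem: minimize $\tfrac12\E_\alpha(v,v)+\varepsilon\|v\|_2^2-\int (f-\lambda)v$ over $v\ge0$ in the extended Dirichlet space. We then read off the variational inequality, derive the bounds $\min\{f,\lambda\}\le\mu\le\lambda$ by testing with $(u-t\varphi)^+$ and $\min\{u,\cdot\}$ type competitors (this is where the Markov property replaces Euclidean positivity arguments), and finally let $\varepsilon\to0$.

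\textbf{Step 2: mass and measure.} The identity $\|\mu\|_1=\|f\|_1$ amounts to $\int\Lop^{\alpha/2}u=0$. We would prove this through conservativeness of the heat semigroup on $\G$ (stochastic completeness, $e^{-t\Lop}1=1$), together with $u\in L^1$ and $\Lop^{\alpha/2}u\in L^1$. Concretely, we pair with cutoffs $\psi_R$ built from dilations of a fixed bump and use $\|\Lop^{\alpha/2}\psi_R\|_\infty\to0$ by homogeneity. Nonnegativity of $u$ and the fact that $\Omega$ has finite measure (needed for $u\in L^1$) come from the $\varepsilon$-regularized problem, via $\lambda|\Omega|\le\int_\Omega\mu\le\|\mu\|_1=\|f\|_1$. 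Since this bound is uniform in $\varepsilon$, it gives (iii), and it is also what allows the limit $\varepsilon\to0$ to be taken in $L^1$. The $L^1$ bound on $u$ itself, uniform in $\varepsilon$, we would get by duality against $\Lop^{-\alpha/2}$ of indicators. Alternatively, we could take $u$ as the increasing limit of the $\varepsilon$-solutions and use the monotone convergence theorem.

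\textbf{Step 3: the case $\alpha\ge1$.} Here $D(\Lop^{\alpha/2})\cap L^2\subset D(\Lop^{1/2})$ by spectral calculus, since $\lambda^{1/2}\le1+\lambda^{\alpha/2}$. The claim $\gradH u=0$ a.e. on $\{u=0\}$ then follows from the locality of the horizontal gradient on level sets. That is, for $u\in D(\Lop^{1/2})$ we have $u^+\in D(\Lop^{1/2})$ with $\gradH u^+=\1_{\{u>0\}}\gradH u$, because $\gradH$ is a first-order derivation and the Sobolev chain rule for Lipschitz functions holds along vector fields. Since $u=u^+$, this gives $\gradH u=0$ a.e. on $\G\setminus\Omega$.

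\textbf{Main obstacle.} We expect the main difficulty to be Step 1–2 at the level of rigor without Fourier analysis: justifying the comparison $\min\{f,\lambda\}\le\mu\le\lambda$ and the exact mass identity for the limiting solution in the non-reflexive $L^1$ setting. We would address this first using the Beurling–Deny criteria for $\E_\alpha$ together with the conservativeness of $e^{-t\Lop^{\alpha/2}}$.
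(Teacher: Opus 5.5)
\textbf{Gap: existence of the minimizer.} Several pieces of your plan match the paper's proof:
\begin{itemize}
\item the Lewy--Stampacchia bounds, obtained by testing the variational inequality with $(u-\varepsilon\phi)^+$;
\item the mass identity, obtained by pairing with dilated cutoffs $\chi\circ\delta_{R^{-1}}$ and using $\|\Lop^{\alpha/2}(\chi\circ\delta_{R^{-1}})\|_\infty\le CR^{-\alpha}$ against $u\in L^1$;
\item the final claim, via spectral calculus and $\gradH u^+=\1_{\{u>0\}}\gradH u$.
\end{itemize}
What fails is the construction of $u$ in $L^1(\G)\cap\mathsf V_\alpha$. Your $\varepsilon\|v\|_2^2$-penalization only postpones the problem. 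To let $\varepsilon\to0$ you need bounds on $\E_\alpha(u_\varepsilon)$, $\|u_\varepsilon\|_2$ and $\|u_\varepsilon\|_1$ that are uniform in $\varepsilon$, and your plan does not produce them.

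The uniform estimate $\lambda|\{u_\varepsilon>0\}|\le\|f\|_1$ does not control the energy by itself. The energy identity gives only $\E_\alpha(u_\varepsilon)\le\|f\|_2\|u_\varepsilon\|_2$. Closing this requires a Faber--Krahn inequality $\|v\|_2^2\lesssim|\{v\neq0\}|^{\alpha/Q}\E_\alpha(v)$, i.e.\ a Nash- or Sobolev-type inequality for $\E_\alpha$, which you never invoke.

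Your substitute, duality against $\Lop^{-\alpha/2}\1_\Omega$, does not repair this:
\begin{itemize}
\item $\Lop^{-\alpha/2}\1_\Omega$ is not even finite when $\alpha\ge Q$. This happens, e.g., for $\G=\mathbb R$ with $1\le\alpha\le2$, or $\G=\mathbb R^2$ with $\alpha=2$, both covered by the theorem.
\item When $\alpha<Q$ it gives only an $L^1$ bound. That does not control $\int fu_\varepsilon$ for $f\in L^2$, so it gives no weak compactness in $\mathsf V_\alpha$.
\end{itemize}
The monotone-limit alternative needs the same uniform bound. The Beurling--Deny criteria and conservativeness you cite give positivity and mass, not compactness. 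Finally, (ii) does not pass to weak limits on its own, because weak convergence does not carry the sets $\{u_\varepsilon>0\}$ along. You would have to show that the limit minimizes the unpenalized functional and then redo complementarity for it.

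\textbf{Missing ingredient: the Nash inequality.} The paper uses
$\|v\|_2^2\le C_{\G,\alpha}\|v\|_1^{2\alpha/(Q+\alpha)}\E_\alpha(v)^{Q/(Q+\alpha)}$,
which follows from $\|P_tv\|_\infty\le t^{-Q/2}\|p_1\|_\infty\|v\|_1$ and holds for every $Q$. With it no penalization is needed. The functional $\tfrac12\E_\alpha(v)-\int(f-\lambda)v$ already contains the term $\lambda\|v\|_1$. Cauchy--Schwarz, Nash and a three-term Young inequality give $\int fv\le\tfrac14\E_\alpha(v)+\tfrac\lambda2\|v\|_1+C_0$, so the functional is coercive on $\{v\in L^1\cap\mathsf V_\alpha:v\ge0\}$.

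The direct method then yields a unique minimizer $u\in L^1$, using weak compactness in $\mathsf V_\alpha$ and weak lower semicontinuity of $\|\cdot\|_1$ (by testing against $0\le\phi\le1$). All remaining steps are proved for this single $u$, with no limit to take:
\begin{itemize}
\item the Lewy--Stampacchia estimate;
\item mass conservation;
\item the identity $\E_\alpha(u,\psi)=\int(f-\mu)\psi$ for all $\psi\in\mathsf V_\alpha$, via the core property;
\item complementarity $\int\eta u=0$, from the energy identity.
\end{itemize}
Once Nash is added your penalized route can be completed, but it then becomes a detour around exactly this coercivity estimate.
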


Theorem~1.2 is in fact a stratified-group generalization of the Euclidean decomposition recently obtained by Ouyang, Spector, and Stockdale \cite[Theorem~1.2]{OSS26}. Although we only need the case $\alpha=1$ for the proof of Theorem~\ref{mthm}, the decomposition is of independent interest so we also prove the result for general $\alpha$. Note that for the sake of completeness, we also include the endpoint case $\alpha = 2$, which is not included in \cite{OSS26}. Decompositions of this kind are related to partial balayage and obstacle problems. In the classical Euclidean setting, it dates back to Kinderlehrer and Stampacchia \cite{KS00}. Then it has been developed in different settings: nonlocal operators on Euclidean spaces \cite{SV13}; compact Riemannian manifolds \cite{GR18}; Heisenberg group \cite{PV13, GRS22}; Dirichlet forms \cite{Kli21}. However, we cannot apply their results directly since their results work mainly on bounded domains (or similar conditions).  We therefore adapt the variational argument to setting of the whole group. In particular, the conservation of mass is recovered by means of a suitable cutoff argument at infinity.

\medskip

Furthermore, by Proposition \ref{l2iso} and the Marcinkiewicz interpolation theorem, we have the following uniform $L^p$ bounds for $p \in (1,2]$.

\begin{corollary}\label{lpbd}
For $1<p\leq2$ there exists a constant $C_p > 0$ which depends only  on $p$ such that for every real-valued $f\in L^p(\G)$ we have
\begin{equation}\label{lpup}
 \|\Rvec f\|_{L^p(\G;\mathbb R^m)}
 \leq C_p \|f\|_{L^p(\G)}.
\end{equation}
In particular, the constant $C_p$ is independent of $m$, the homogeneous dimension, the step, and the group structure of $\G$.
\end{corollary}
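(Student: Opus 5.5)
The plan is Marcinkiewicz interpolation for the sublinear operator $f\mapsto|\Rvec f|$, between the weak type $(1,1)$ bound of Theorem~\ref{mthm} (constant $2$) and the strong type $(2,2)$ bound coming from the $L^2$ isometry of Proposition~\ref{l2iso}.

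\emph{The $L^2$ endpoint.} For $f\in L^2(\G)$ real-valued, $\Lop^{-1/2}f$ need not lie in $L^2$. So we first work with $f$ in the range of $\Lop^{1/2}$, i.e.\ $f=\Lop^{1/2}g$ with $g\in D(\Lop^{1/2})$. Then
\[
\|\gradH g\|_{L^2}^2=\sum_j\langle -X_j^2 g,g\rangle=\|\Lop^{1/2}g\|_{L^2}^2 .
\]
The range of $\Lop^{1/2}$ is dense in $L^2$ because $\Lop$ is injective: $\Lop h=0$ forces $\gradH h=0$, so $h$ is constant by H\"ormander's condition, and hence $h=0$ in $L^2$. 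Therefore $\|\Rvec f\|_{L^2}=\|f\|_{L^2}$ extends to all of $L^2$, giving strong type $(2,2)$ with constant $1$. I take this as the content of Proposition~\ref{l2iso}.

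\emph{Interpolation.} The function $T f:=|\Rvec f|$ is sublinear and is defined consistently on $L^1+L^2$, since both extensions agree on $L^1\cap L^2$. This consistency is the one point to check. Theorem~\ref{mthm} gives the uniqueness of the continuous extension, and an $L^1$-limit and an $L^2$-limit of the same approximating sequence share an a.e.-convergent subsequence. The $L^p$ functions with $1<p<2$ lie in $L^1+L^2$.

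The Marcinkiewicz theorem, applied with $M_1=2$ at $p_0=1$ and $M_2=1$ at $p_1=2$, gives
\[
\|\Rvec f\|_{L^p}\le C_p\,\|f\|_{L^p}.
\]
Here $C_p$ comes from the standard formula
\[
C_p=2\Big(\frac{p}{p-1}+\frac{p}{2-p}\Big)^{1/p}M_1^{1-\theta}M_2^{\theta},\qquad \frac1p=1-\theta+\frac{\theta}{2}.
\]
It depends only on $p$ and on the endpoint constants, and both endpoint constants are universal. At $p=2$ we simply take $C_2=1$. If one prefers a $C_p$ bounded near $p=2$, the Riesz--Thorin/Marcinkiewicz version with weak $(2,2)$ at a point beyond $p$ is not available, so we keep the blow-up of $p/(2-p)$ only on the open interval and treat $p=2$ directly.

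The main obstacle is making sure no group-dependent constant enters. The only ingredients are the two endpoint bounds and the distribution-function layer-cake argument. That argument uses only Haar measure and the truncation $f=f\1_{|f|>t}+f\1_{|f|\le t}$, with no doubling or geometry. Hence $C_p$ is independent of $m$, the homogeneous dimension, the step and the group structure.
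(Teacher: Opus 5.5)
Your proposal is correct and is essentially the paper's argument: the paper obtains Corollary~\ref{lpbd} directly from the weak type $(1,1)$ bound with constant $2$ in Theorem~\ref{mthm} and the $L^2$ isometry of Proposition~\ref{l2iso} via the Marcinkiewicz interpolation theorem, so $C_p$ depends only on $p$ and the two universal endpoint constants. Your side remark about $p$ near $2$ is not needed. In fact, a bound uniform near $p=2$ can be obtained by Riesz--Thorin, because $\Rvec$ is linear: interpolate between a fixed $p_0\in(1,2)$ and $p=2$ for the complexified operator. This does not affect your proof.
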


Corollary~\ref{lpbd} does not assert the estimates for $p>2$.  Regarding the results in Euclidean spaces \cite{Ste83}, Heisenberg groups \cite{CMZ96}, and H-type groups \cite{LP04, Bar10}, it is therefore natural to ask whether similar results can hold for $p > 2$. However, we will give  a negative answer for the uniform $L^p$ bounds with $p > 2$ for the class of general
stratified Lie groups, which is a new feature of the Riesz transforms on stratified Lie groups (compared to those on Euclidean spaces).

\begin{theorem}\label{thmcce}
For any $p > 2$, there is a sequence of stratified Lie groups $\{\G_k\}_{k = 1}^\infty$ such that
\begin{equation}\label{eq:main-divergence}
  \| \Rvec^{\G_k} \|_{
       L^p(\G_k)\to L^p(\G_k;\mathbb{R}^{m_k})} \to \infty, \qquad \mbox{as} \quad k \to \infty,
\end{equation}
where $m_k$ is the corresponding horizontal dimension of $\G_k$. Consequently, for $p > 2$ there is no finite constant $C_p > 0$ such that
\begin{equation}\label{eq:universal-bound}
  \|\Rvec f\|_{L^p(\G;\mathbb{R}^m)}
  \le C_p\|f\|_{L^p(\G)}
\end{equation}
holds for all stratified Lie groups $\G$ and all $f\in L^p(\G)$.
\end{theorem}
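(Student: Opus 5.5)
The plan is a transference-and-limiting argument. It reduces \eqref{eq:main-divergence} to the failure of a single $L^p$ Riesz transform bound, $p>2$, on one fixed connected Lie group $H$ generated by five vector fields. I have not identified such an $H$; everything else is comparatively routine.

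\emph{Groups and monotonicity.} Take $\G_k=N_{5,k}$, the free Carnot group of rank $5$ and step $k$, with $V_1$ spanned by an orthonormal basis $X_1,\dots,X_5$. For $k\le k'$, $N_{5,k}$ is the quotient of $N_{5,k'}$ by the central-series subgroup $\exp(V_{k+1}\oplus\cdots\oplus V_{k'})$. The quotient map sends horizontal fields to horizontal fields, so it intertwines the sub-Laplacians and the horizontal gradients. A Coifman--Weiss type transference for quotients by closed normal subgroups of amenable (here nilpotent) groups should then give
\[
\|\Rvec^{N_{5,k}}\|_{p\to p}\le \|\Rvec^{N_{5,k'}}\|_{p\to p}.
\]
Concretely, one writes the Riesz transform through the heat semigroup, $\Lop^{-1/2}=\pi^{-1/2}\int_0^\infty e^{-t\Lop}t^{-1/2}\,dt$, and uses that the heat kernel of the quotient is the fiber integral of the heat kernel upstairs. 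So the norms are nondecreasing in $k$. Suppose, for contradiction, that they are bounded by some $A<\infty$.

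\emph{Passage to a limit group.} Let $H$ be a simply connected Lie group whose Lie algebra $\mathfrak h$ is generated by five vectors $Y_1,\dots,Y_5$, and equip it with the sub-Laplacian $-\sum Y_j^2$. Since $\mathfrak h$ is a quotient of the free Lie algebra on five generators, the truncated Baker--Campbell--Hausdorff maps from $N_{5,k}$ carry the horizontal fields to the fields $Y_j$ up to errors of order $k+1$ in the exponential coordinates. The aim is to show that for $f\in C_c^\infty(H)$ the transplanted heat kernels $p_t^{N_{5,k}}$ converge to $p_t^H$ locally uniformly, together with one horizontal derivative. This should follow from Gaussian bounds and hypoelliptic compactness, uniform for $t$ in compact subsets of $(0,\infty)$. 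Small and large times in the subordination integral are handled separately, using the cancellation $\int \nabla_H p_t=0$ and the $L^2$ isometry of Proposition~\ref{l2iso} to control the tails. One then obtains $\|\Rvec^H f\|_{L^p(K)}\le A\|f\|_p$ for every compact $K$, hence $\|\Rvec^H\|_{p\to p}\le A$. If $H$ is not unimodular, the modular function has to be tracked and Haar measures matched on compacts.

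\emph{Choice of $H$, and the main obstacle.} The contradiction requires an $H$ with five generators for which $\Rvec^H$ is unbounded on $L^p$ for every $p>2$, or at least for a sequence of such $H$ with norms tending to infinity; the latter suffices because each has only finitely many brackets that matter on compacts. This step is the hard part. The first thing I would try is a non-nilpotent group of exponential volume growth built as a semidirect product $\mathbb R^a\rtimes\mathbb R^b$ with $a+b=5$. The $p>2$ unboundedness would come from the Coulhon--Duong mechanism: the reverse inequality $\|\Lop^{1/2}f\|_{p'}\lesssim\|\nabla_H f\|_{p'}$ fails at the dual exponent, or, alternatively, the pointwise gradient heat kernel bound $|\nabla_H p_t|\lesssim t^{-1/2}p_{ct}$ fails. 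Either failure is exhibited by test functions concentrated along an expanding direction, where the sub-Laplacian acquires an effective drift. If this does not work, the fallback is to build the divergence directly on $N_{5,k}$ with explicit test functions $f_k$ adapted to a filiform-type quotient of step $k$, where $\Rvec f_k$ concentrates on a region of size $e^{ck}$ relative to $f_k$.
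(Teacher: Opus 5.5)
Your proposal has a genuine gap, and it is exactly the content of the theorem. You reduce \eqref{eq:main-divergence} to finding a five-generator Lie group $H$ (or a family of them) whose horizontal Riesz transform is unbounded on $L^p$ for $p>2$, and you never produce one.

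The search space is also unpromising:
\begin{itemize}
\item A single $H$ of polynomial growth is excluded by Alexopoulos \cite{Al92}, which gives boundedness for all $1<p<\infty$.
\item A sequence of nilpotent $H$'s with diverging norms just restates what is to be proved, since each is a quotient of some $N_{5,k}$.
\item So the reduction has content only for a non-nilpotent $H$ of exponential growth. For the most natural such candidates, the solvable extensions $N\rtimes\mathbb R$ of stratified groups with the lifted sub-Laplacian, the first-order Riesz transforms are known to be bounded on $L^p$ for every $1<p<\infty$ (Martini--Vallarino).
\end{itemize}

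For non-nilpotent $H$ your limiting step is also unjustified. There is no homomorphism $N_{5,k}\to H$, and $\exp$ need not be a global diffeomorphism. BCH truncation matches $H$ only to finite order near the identity, whereas $\Lop^{-1/2}$ depends on the global geometry: volume growth of degree $Q_k\to\infty$ versus exponential growth, and Haar measures that do not match. The filiform fallback comes with no mechanism that would produce the concentration you describe.

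The missing idea is that the limiting object should be a representation, not a group. The paper argues in three steps.
\begin{enumerate}
\item It starts from the Schr\"odinger operator $\Delta+V+1$ on $\mathbb R^3$ with $V=(x^2+y^2)^{(\varepsilon-2)/2}$ and $0<\varepsilon<1-2/p$. By the construction of \cite{S95,D26}, there is a bounded, compactly supported $g$ such that $u=(\Delta+V+1)^{-1}g$ satisfies $|\partial_xu|\sim|x|(x^2+y^2)^{\varepsilon/2-1}$ near the $z$-axis. Hence $\partial_x(\Delta+V+1)^{-1/2}$ is unbounded on $L^p$.
\item It approximates $V^{1/2}$ by polynomials $P_k$. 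This gives Mosco convergence of the associated forms, hence strong convergence $(\Delta+P_k^2+1)^{-1/2}\to(\Delta+V+1)^{-1/2}$ on $L^2$. Lower semicontinuity of the $L^p$ norm then yields $\sup_k\|\partial_x(\Delta+P_k^2+1)^{-1/2}\|_{p\to p}=\infty$.
\item It writes $\Delta+P_k^2+1=-\sum_{j=1}^5(A_j^k)^2$ with $A^k=(\partial_{x_1},\partial_{x_2},\partial_{x_3},iP_k,iI)$. These generate a nilpotent Lie algebra of step $\deg P_k+1$, whose group acts isometrically on every $L^p(\mathbb R^3;\mathbb C)$. The transference theorem of Robinson--Sikora \cite{RS16}, applied through the free Carnot group $\G_k$ of rank $5$ and that step, gives $\|\Rvec^{\G_k}\|_{p\to p}\ge\frac12\|\partial_x(\Delta+P_k^2+1)^{-1/2}\|_{p\to p}$.
\end{enumerate}

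This route needs neither your monotonicity step nor any heat-kernel limit; one simply passes to a subsequence.
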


\begin{remark}
We will see in the construction of $\G_k$ that the horizontal dimension of $\G_k$, which is denoted by $m_k$, can be chosen to be constant $5$, and the step of $\G_k$, which is denoted by $s_k$, tends to infinity as $k \to \infty$ (see Remark \ref{restep}). As a consequence, it is still plausible that such uniform $L^p$ bound ($p>2$) still exists for some special class of stratified Lie groups with certain restriction on the structure or the step, besides known results on Euclidean spaces \cite{Ste83} and Heisenberg groups \cite{CMZ96}. In fact, we will provide another affirmative answer in the setting of H-type groups in a forthcoming work \cite{MWZ26}, where the group structure is more involved.
\end{remark}

Nevertheless, it is not surprising that the behavior of Riesz transform changes from the range $1 < p \le 2$ to $p > 2$. See for example \cite{CD99, DOY06} for the results for a single Riesz transform and \cite{LF04, D26} for the uniform bound results of Riesz transforms in different settings. In fact, we will make use of the counterexample in \cite{D26} to give the construction of the sequence $\{\G_k\}_{k = 1}^\infty$.

\medskip

We remark that counterparts of Theorem \ref{mthm} and Corollary \ref{lpbd} are also valid for the complex-valued case. We will not expand it in the Introduction but refer the interested reader to Theorem \ref{mthmC} and Corollary \ref{lpbdC} for more details.

\medskip
The paper is organized as follows.
In Section~\ref{pre} we collect the required facts about stratified groups. In Section~\ref{pf1} we prove Theorem~\ref{mthm} from Theorem~\ref{dcmp}. In Section~\ref{pf2}, we construct the obstacle minimizer, prove the Lewy--Stampacchia estimate, establish conservation of mass and complementarity, and complete the proof of Theorem~\ref{dcmp} with all the materials developed in Section~\ref{pre}. Finally, Section~\ref{sce} is dedicated to give a construction of the sequence $\{\G_k\}_{k = 1}^\infty$ in Theorem~\ref{thmcce}.

\section{Preliminaries}\label{pre}

\subsection{The sub-Laplacian and  Riesz transform on stratified groups}

For $r>0$, define the linear map $\delta_r:\mathfrak g\to\mathfrak g$ by
\[
 \delta_r(v_1+\cdots+v_s):=rv_1+r^2v_2+\cdots+r^sv_s,
 \qquad v_k\in V_k, 1 \le k \le s.
\]
The stratification \eqref{defdeV} implies that $\delta_r$ is a Lie algebra automorphism. Since
$\G$ is connected, simply connected, and nilpotent, via the exponential map it induces a global diffeomorphism on $\G$, which we denote by $\delta_r$ as well.  Let
\[
 Q:=\sum_{k=1}^{s}k\dim V_k
\]
be the homogeneous dimension. A stratified group is unimodular, so its Haar
measure $|\cdot|$ is both left and right invariant; under the preceding dilation it
satisfies $|\delta_rE|=r^Q|E|$. In the following we will use the notation $L^p(\G)$ ($p \in [1,\infty)$) to denote the real $p$-integrable functions with respect to Haar measure while the notation $L^p(\G;\mathbb{R}^n)$ is reserved to vector-valued $L^p$-spaces. In particular, we have $L^p(\G;\mathbb{C})$, the complex-valued $L^p$-spaces. When the situation is clear from the context we will write the $L^p$-norm as $\|\cdot\|_p$. The same conventions apply to the $L^\infty$-spaces, the weak $L^p$-spaces, Sobolev spaces, smooth functions with compact supports, etc. In the rest of this section (except Subsection \ref{ssAhti}) we will focus on the complex-valued case while the results are also true for the real-valued case. We use the convention that the  inner product on $L^2(\G;\mathbb{C})$ is linear in its first
variable:
\[
 \langle v,w\rangle_2=\int_\G v(x)\overline{w(x)}\,dx.
\]
As a consequence of the unimodularity every $X_j$ is skew-adjoint on $C_c^\infty(\G;\mathbb{C})$, and
\begin{equation}\label{grad}
 \langle\Lop v,v\rangle_2
 =\sum_{j=1}^{m}\|X_jv\|_2^2
 =\|\gradH v\|_2^2,
 \qquad v\in C_c^\infty(\G;\mathbb{C}).
\end{equation}
The closure of this form has domain $D(\Lop^{1/2})$, which is the horizontal Sobolev space $W_H^{1,2}(\G;\mathbb{C})$. Standard structural and analytic facts used here can be found in e.g. \cite{Fol75,FS82,VSC92}. The following proposition shows that the Riesz transform is an isometry on $L^2(\G;\mathbb{C})$.

\begin{proposition}\label{l2iso}
The  kernel of $\Lop$ in $L^2(\G;\mathbb{C})$ is $\{0\}$. Moreover, the operators
$\Rvec_j=X_j\Lop^{-1/2}$, initially defined on the range of $\Lop^{1/2}$, extend uniquely to $L^2(\G;\mathbb{C})$ and satisfy
\begin{equation}\label{riso}
 \sum_{j=1}^{m}\|\Rvec_jf\|_2^2=\|f\|_2^2,
 \qquad f\in L^2(\G;\mathbb{C}).
\end{equation}
\end{proposition}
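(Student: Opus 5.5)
The plan is to prove the two assertions in order: first that $\ker\Lop=\{0\}$, then that $\Rvec$ is isometric on the range of $\Lop^{1/2}$, which is dense, so that the unique extension follows.

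\textbf{Step 1: the kernel is trivial.} Let $v\in D(\Lop)$ with $\Lop v=0$. Since \eqref{grad} passes to the closed form, we get
\[
\|\gradH v\|_2^2=\langle \Lop v,v\rangle_2=0,
\]
so $X_jv=0$ in the distributional sense for every $j$. By the stratification \eqref{defdeV}, iterated brackets of $X_1,\dots,X_m$ span $\mathfrak g$. Hence every left-invariant vector field annihilates $v$ distributionally. A distribution killed by all left-invariant vector fields is constant, because its right convolution derivatives vanish and $\G$ is connected. The only constant in $L^2(\G;\mathbb{C})$ is $0$, since $|\G|=\infty$. As an alternative to the bracket argument, one could invoke hypoellipticity of $\Lop$ (H\"ormander) to get $v$ smooth, and then argue pointwise.

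\textbf{Step 2: density of the range.} Because $\Lop$ is self-adjoint and nonnegative, $\ker\Lop^{1/2}=\ker\Lop=\{0\}$. Therefore $\overline{\operatorname{Ran}\Lop^{1/2}}=(\ker\Lop^{1/2})^\perp=L^2(\G;\mathbb{C})$, and $\Lop^{-1/2}$ is a well-defined (unbounded) self-adjoint operator on this dense range.

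\textbf{Step 3: the isometry.} Take $f=\Lop^{1/2}g$ with $g\in D(\Lop^{1/2})$. Then $\Rvec_jf=X_jg$, and
\[
\sum_j\|X_jg\|_2^2=\|\gradH g\|_2^2=\|\Lop^{1/2}g\|_2^2=\|f\|_2^2.
\]
Here the middle equality is the identity \eqref{grad} extended from $C_c^\infty$ to the form domain $D(\Lop^{1/2})=W^{1,2}_H$. This extension is the point needing care. It holds because the form $v\mapsto\|\gradH v\|_2^2$ on $C_c^\infty$ is closable with closure $\|\Lop^{1/2}v\|_2^2$: $\Lop$ is essentially self-adjoint on $C_c^\infty$, and $C_c^\infty$ is a form core. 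So for $g$ in the form domain we take $g_n\in C_c^\infty$ with $g_n\to g$ in the form norm. Then $X_jg_n$ is Cauchy in $L^2$, and its limit coincides with the distributional $X_jg$ by skew-adjointness. Passing to the limit gives the identity.

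\textbf{Step 4: extension.} The map $f\mapsto(\Rvec_jf)_j$ is thus a linear isometry from a dense subspace into $L^2(\G;\mathbb{C}^m)$. It extends uniquely by continuity, and \eqref{riso} persists in the limit.

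I expect the main obstacle to be Step 1, namely justifying that a horizontally constant $L^2$ function vanishes. The Hörmander/hypoellipticity route makes this clean.
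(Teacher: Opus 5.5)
Your proposal is correct and follows the paper's proof. The kernel is shown to be trivial via $\|\gradH v\|_2^2=\langle\Lop v,v\rangle_2=0$, bracket generation and infinite Haar measure, and then the isometry $\Lop^{1/2}g\mapsto\gradH g$ is defined on the dense range $\operatorname{Ran}(\Lop^{1/2})$ and extended by continuity. The differences are minor: you do the constancy step purely distributionally, whereas the paper first invokes hypoellipticity to get smoothness, and you spell out the extension of \eqref{grad} to $D(\Lop^{1/2})$, which the paper uses implicitly ``by closure''.
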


\begin{proof}
If $h\in\ker \Lop$, then $h\in D(\Lop^{1/2})$ and \eqref{grad}, extended by closure, gives $X_jh=0$ in the sense of distributions for every $j$. By hypoellipticity the function $h$ is also smooth.  Repeated commutators also annihilate $h$, while the vector fields $X_1,\ldots,X_m$ and their commutators span $\mathfrak g$. Thus every left-invariant derivative of $h$ vanishes, and thus $h$ is constant on the whole group $\G$. Since a nontrivial stratified group has infinite Haar measure, the function $h$ has to be $0$.

Now define an operator on $\operatorname{Ran}(\Lop^{1/2})$ by
\[
 U(\Lop^{1/2}h)=\gradH h,
 \qquad h\in D(\Lop^{1/2}).
\]
The definition is unambiguous because $\ker\Lop^{1/2}=\ker\Lop=\{0\}$.
The identity \eqref{grad} gives 
\[
 \|U(\Lop^{1/2}h)\|_{2}
 =\|\Lop^{1/2}h\|_2.
\]
Moreover,
$\overline{\operatorname{Ran}(\Lop^{1/2})}=(\ker\Lop^{1/2})^{\perp} = (\ker\Lop)^{\perp}=L^2(\G;\mathbb{C})$.
Thus $U$ extends uniquely to an isometry from $L^2(\G;\mathbb{C})$ into
$L^2(\G;\mathbb C^m)$. We define $\Rvec=U$ and let $\Rvec_j$ be its
$j$th component. This proves \eqref{riso}.

\end{proof}

\subsection{Fractional Dirichlet forms}

Let $P_t=e^{-t\Lop}$ be the semigroup of the generator $\Lop$. With the standard convolution convention, we have
\begin{equation} \label{ff7} 
     P_tv(x)=(v*p_t)(x)
     :=\int_\G v(y)p_t(y^{-1}x)\,dy
     =\int_\G v(xz)p_t(z)\,dz.
\end{equation}
For the last equality, we used a change of variable $y=xz$ and the left invariance of the Haar
measure, and then a change of variables $z \mapsto z^{-1}$ and the unimodularity. We also recall that Folland \cite[Theorem~(3.1)]{Fol75} (see also \cite[\S~1G]{FS82}) proved that $P_t$ is a symmetric, positivity
preserving contraction semigroup whose convolution kernel has mass one and
satisfies
\begin{equation}\label{hsca}
 p_t(x)=t^{-Q/2}p_1(\delta_{t^{-1/2}}x),
 \qquad p_t(x^{-1})=p_t(x), \qquad \forall \, t > 0, x \in \G.
\end{equation}
In addition, $p_1$ is a Schwartz function; see 
\cite[Corollaries (3.5) and (3.6)]{Fol75}.
For $0<\alpha\le2$, define
\[
 \mathsf V_\alpha:=D(\Lop^{\alpha/4}),
 \qquad
 \E_\alpha(v,w)
 :=\langle\Lop^{\alpha/4}v,\Lop^{\alpha/4}w\rangle_2,
 \qquad
 \E_\alpha(v):=\E_\alpha(v,v).
\]
If $E_\lambda$ denotes the spectral resolution of $\Lop$, then
\[
 \mathsf V_\alpha
 =\left\{v\in L^2(\G;\mathbb{C}):
   \int_{[0,\infty)}\lambda^{\alpha/2}
       \,d\langle E_\lambda v,v\rangle_2<\infty\right\},
 \qquad
 \E_\alpha(v)=\|\Lop^{\alpha/4}v\|_2^2.
\]
Thus $\mathsf V_\alpha$ is the finite energy space and is a Hilbert space
for the norm $\bigl(\|v\|_2^2+\E_\alpha(v)\bigr)^{1/2}$, which coincides with the graph norm of $\Lop^{\alpha/4}$.

The following difference kernel representation of $\E_\alpha(\cdot,\cdot) $ will be useful.

\begin{lemma}\label{jrep}
For $\alpha \in (0,2]$, we adopt the convention that when $v \in L^2(\G;\mathbb{C})$ but $v \notin \mathsf V_\alpha$, we have $\E_\alpha(v) =  \infty$. Then the following statements hold.
\begin{enumerate}[label=\textup{(\roman*)}]
\item 
Let $0<\alpha<2$ and put
\[
 c_\alpha=\frac{\alpha/2}{\Gamma(1-\alpha/2)},
 \qquad
 J_\alpha(z)=c_\alpha\int_0^\infty
 p_t(z)\,\frac{dt}{t^{1+\alpha/2}},
 \qquad z\in\G\setminus\{e\}.
\]
Here $e$ denotes the identity element of $\G$.
Then $J_\alpha$ is finite away from the identity, positive, symmetric, and
homogeneous of degree $-Q-\alpha$.
Then for $v \in L^2(\G;\mathbb{C})$ we have 
\begin{equation}\label{jumpp}
 \E_\alpha(v) = \frac12\int_\G\int_\G|v(x)-v(xz)|^2J_\alpha(z)\,dz\,dx.
\end{equation}
Furthermore, for $v,w\in\mathsf V_\alpha$,
\begin{equation}\label{jump}
 \E_\alpha(v,w)
 =\frac12\int_\G\int_\G
 \big(v(x)-v(xz)\big)\overline{\big(w(x)-w(xz)\big)}
 J_\alpha(z)\,dz\,dx,
\end{equation}
and the integral is absolutely convergent.
\item Let $\alpha = 2$, for $v \in L^2(\G;\mathbb{C})$ we have 
\begin{equation}\label{jump22p}
 \E_2(v)
 = \lim_{t \to 0^+}\frac{1}{2t}\int_\G\int_\G
 \big|v(x)-v(xz)\big|^2
 p_t(z)\,dz\,dx,
\end{equation}
where the limit on the right-hand side always exists (could be $\infty$).
Furthermore, for $v,w\in\mathsf V_2$, the following limit holds:
\begin{equation}\label{jump22}
 \E_2(v,w)
 = \lim_{t \to 0^+}\frac{1}{2t}\int_\G\int_\G
 \big(v(x)-v(xz)\big)\overline{\big(w(x)-w(xz)\big)}
 p_t(z)\,dz\,dx.
\end{equation}
\end{enumerate}
\end{lemma}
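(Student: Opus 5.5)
The plan is to reduce everything to the semigroup identity
\[
 \frac{1}{2}\int_\G\int_\G |v(x)-v(xz)|^2 p_t(z)\,dz\,dx=\langle (I-P_t)v,v\rangle_2,
 \qquad v\in L^2(\G;\mathbb{C}),\ t>0,
\]
and then feed it into the spectral theorem. First I check the identity. Expand the square and use \eqref{ff7}. Since $p_t$ has mass one and the Haar measure is right invariant, $\int\int|v(xz)|^2p_t(z)\,dz\,dx=\|v\|_2^2$. The cross terms give $\operatorname{Re}\langle P_tv,v\rangle_2$, and $\langle P_tv,v\rangle_2$ is real because $P_t$ is self-adjoint. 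All integrals converge absolutely by Tonelli, since $p_t\ge0$. The sesquilinear version, with $v,w$ in place of $v,\bar v$, follows in the same way (or by polarization) and equals $\langle (I-P_t)v,w\rangle_2$.

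For (i), I first record the structural properties of $J_\alpha$ from \eqref{hsca}. Write $p_t(z)=t^{-Q/2}p_1(\delta_{t^{-1/2}}z)$ and substitute $t=|z|^2 s$ for a homogeneous norm $|z|$.
\begin{itemize}
\item Homogeneity of degree $-Q-\alpha$ follows from this substitution.
\item Finiteness away from $e$ uses that $p_1$ is Schwartz: this handles $t\to0$, while $t^{-Q/2}$ together with $t^{-1-\alpha/2}$ handles $t\to\infty$.
\item Symmetry comes from $p_t(z^{-1})=p_t(z)$.
\item Positivity comes from $p_t>0$ (the Gaussian lower bound, or just $p_t\ge0$ and not identically zero).
\end{itemize}

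Next, the scalar identity $\lambda^{\alpha/2}=c_\alpha\int_0^\infty(1-e^{-t\lambda})t^{-1-\alpha/2}\,dt$ for $\lambda\ge0$ follows by integration by parts and the Gamma function. Apply the spectral theorem and Tonelli (everything is nonnegative):
\[
 c_\alpha\int_0^\infty\langle(I-P_t)v,v\rangle_2\frac{dt}{t^{1+\alpha/2}}
 =\int\lambda^{\alpha/2}\,d\langle E_\lambda v,v\rangle_2,
\]
which equals $\E_\alpha(v)$, including the value $\infty$ when $v\notin\mathsf V_\alpha$. On the left, insert the difference formula and exchange the $t$-integral with the $dz\,dx$ integrals by Tonelli to get \eqref{jumpp}.

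For \eqref{jump}, the integrand is absolutely integrable by Cauchy--Schwarz against $J_\alpha\,dz\,dx$, using \eqref{jumpp} for $v$ and $w$. The formula then follows by polarization: both sides are sesquilinear forms agreeing on the diagonal.

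For (ii), by the spectral theorem
\[
 \frac{1}{t}\langle(I-P_t)v,v\rangle_2=\int\frac{1-e^{-t\lambda}}{t}\,d\langle E_\lambda v,v\rangle_2 .
\]
The integrand increases to $\lambda$ as $t\downarrow0$, since $(1-e^{-s})/s$ is decreasing in $s$. Monotone convergence therefore gives the existence of the limit in \eqref{jump22p} and identifies it with $\int\lambda\,d\langle E_\lambda v,v\rangle_2=\E_2(v)$, possibly $\infty$.

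For \eqref{jump22} with $v,w\in\mathsf V_2$, polarize. Alternatively, apply dominated convergence to $\int\frac{1-e^{-t\lambda}}{t}\,d\langle E_\lambda v,w\rangle_2$: the complex spectral measure has total variation controlled by $\lambda\,d\langle E_\lambda v,v\rangle^{1/2}d\langle E_\lambda w,w\rangle^{1/2}$-type bounds, and $|(1-e^{-t\lambda})/t|\le\lambda$.

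The main technical care lies in (i), namely the justification that $J_\alpha$ is finite and homogeneous. That rests on the decay of $p_1$, which is given. Everything else is Tonelli and the spectral theorem.
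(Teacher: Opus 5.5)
Your proposal is correct and follows the paper's proof essentially step by step. The shared steps are the difference identity \eqref{ff9}, the scalar formula \eqref{scal} with the spectral theorem and Tonelli for \eqref{jumpp}, Cauchy--Schwarz plus polarization for \eqref{jump}, and monotone convergence of $(1-e^{-t\lambda})/t\uparrow\lambda$ followed by polarization for part (ii). The only difference is cosmetic: you get finiteness of $J_\alpha$ away from the identity from the Schwartz decay of $p_1$ and the scaling \eqref{hsca}, whereas the paper reads both finiteness and strict positivity off the two-sided Gaussian bounds \eqref{gaus}. Note that your fallback of $p_t\ge 0$ not identically zero only gives $J_\alpha\ge 0$; strict positivity $J_\alpha(z)>0$ at every $z\neq e$ needs the Gaussian lower bound, which is your first option.
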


\begin{proof}
We begin with the first assertion. Thus we have $0 < \alpha < 2$. It is easy to verify the scalar identity:
\begin{equation}\label{scal}
 \lambda^{\alpha/2}
 =c_\alpha\int_0^\infty(1-e^{-t\lambda})
       \frac{dt}{t^{1+{\alpha/2}}},
 \qquad c_\alpha=\frac{{\alpha/2}}{\Gamma(1-{\alpha/2})}.
\end{equation}
Applying \eqref{scal} to the spectral resolution of $\Lop$ gives 
\begin{equation} \label{ff8} 
     \E_\alpha(v)
     =c_\alpha\int_0^\infty
         \langle(I-P_t)v,v\rangle_2\frac{dt}{t^{1+{\alpha/2}}}, \qquad \forall \, v\in L^2(\G;\mathbb{C}).
\end{equation}
Note that the
double integral below is absolutely convergent for every $v \in L^2(\G;\mathbb{C})$
 and
 \begin{equation} \label{ff9}       \int_\G\int_\G|v(x)-v(xz)|^2p_t(z)\,dz\,dx
      =2\langle(I-P_t)v,v\rangle_2\leq4\|v\|_2^2,
 \end{equation}
where we have used that $|v(x)-v(xz)|^2= |v(x)|^2+|v(xz)|^2-2\mathrm{Re}\,({v(x)v(xz)})$,  \eqref{ff7} and the self-adjointness of $P_t$, by direct computation.
Recall the two-sided Gaussian estimates with respect to the
Carnot--Carath\'eodory distance $\rho$ (cf. e.g. \cite[Theorem~VIII.2.9]{VSC92}),
\begin{equation}\label{gaus}
 C_1t^{-Q/2}\exp\!\left(-c_1\frac{\rho(z)^2}{t}\right)
 \leq p_t(z)\leq
 C_2t^{-Q/2}\exp\!\left(-c_2\frac{\rho(z)^2}{t}\right).
\end{equation}
In particular, $p_t(z)>0$. For $z\ne e$,
 \eqref{gaus} yields that $J_\alpha$ is finite and strictly positive.
The value at $e$ is immaterial because a singleton has Haar measure zero;
we may set $J_\alpha(e)=0$. Symmetry follows from
$p_t(z^{-1})=p_t(z)$. Formula \eqref{hsca} and the substitution
$t=r^2\tau$ give
\[
 J_\alpha(\delta_rz)=r^{-Q-\alpha}J_\alpha(z),
\]
which proves the homogeneity of $J_\alpha$. Then inserting \eqref{ff9} into \eqref{ff8}, since all integrands are nonnegative, Fubini's theorem permits the
$t$ and spatial integrations to be interchanged and this yields \eqref{jumpp}. 
A standard complex polarization verifies \eqref{jump}. Finally, Cauchy--Schwarz with
respect to $J_\alpha(z)\,dz\,dx$ proves absolute convergence for every
$v,w\in\mathsf V_\alpha$. 

Now we are left to prove the second assertion, namely $\alpha = 2$. Notice that, for every $\lambda \ge 0$, as $t \to 0^+$, we have $ \frac{1 - e^{-\lambda t}}{t} (\ge 0)$ increases to $\lambda$. By the monotone convergence theorem, we obtain
\[
\E_2(v) = \lim_{t \to 0^+} \frac{1}{t} \langle(I-P_t)v,v\rangle_2. 
\]
Then inserting \eqref{ff9} into the equation above we obtain \eqref{jump22p} and then a standard complex polarization gives \eqref{jump22}.
\end{proof}

The next lemma is an immediate consequence of Lemma  \ref{jrep}, which shows that $\E_\alpha$ is indeed a Dirichlet form (see for example \cite{FOT11}).

\begin{lemma}\label{mark}
Let $0<\alpha\le 2$. The following properties hold:
\begin{enumerate}[label=\textup{(\roman*)}]
\item If $\Phi:\mathbb C\to\mathbb C$ is $1$-Lipschitz and $\Phi(0)=0$, then
$\Phi\circ v\in\mathsf V_\alpha$ and
$\E_\alpha(\Phi\circ v)\leq\E_\alpha(v)$ for every
$v\in\mathsf V_\alpha$.
\item If $v,w\in\mathsf V_\alpha$ are nonnegative and $vw=0$ almost
everywhere, then $\E_\alpha(v,w)\leq0$.
\item For every real-valued $v\in\mathsf V_\alpha$,
\[
 \E_\alpha(v,v^-)
 \leq-\E_\alpha(v^-)
 \leq0,
\]
where $v^- := -\min\{v,0\}$.
\end{enumerate}
\end{lemma}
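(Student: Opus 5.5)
The plan is to read all three assertions off the difference representations of Lemma~\ref{jrep}: \eqref{jumpp} when $0<\alpha<2$, and the limit formula \eqref{jump22p} when $\alpha=2$. In both cases $\E_\alpha(v)$ is a supremum, or a monotone limit, of integrals of $|v(x)-v(xz)|^2$ against a nonnegative kernel, namely $J_\alpha(z)\,dz\,dx$ or $\frac{1}{2t}p_t(z)\,dz\,dx$. Pointwise inequalities between the integrands therefore pass directly to the energies. For the polarized quantities in (ii) and (iii) we use \eqref{jump} and \eqref{jump22}. These are legitimate once every function involved is known to lie in $\mathsf V_\alpha$, and part (i) provides exactly that.

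For (i), take $v\in\mathsf V_\alpha$. Since $\Phi$ is $1$-Lipschitz with $\Phi(0)=0$, we have $|\Phi(v(x))|\le |v(x)|$, so $\Phi\circ v\in L^2$. We also have the pointwise bound $|\Phi(v(x))-\Phi(v(xz))|\le |v(x)-v(xz)|$.
\begin{itemize}
\item For $\alpha<2$, integrate this bound against $J_\alpha(z)\,dz\,dx$ and apply \eqref{jumpp}. This gives $\E_\alpha(\Phi\circ v)\le\E_\alpha(v)<\infty$. By the convention in Lemma~\ref{jrep}, finite energy means $\Phi\circ v\in\mathsf V_\alpha$.
\item For $\alpha=2$, fix $t>0$ and integrate against $\frac1{2t}p_t$. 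Then let $t\to0^+$ in \eqref{jump22p}. The limits exist for both functions, so the inequality survives.
\end{itemize}

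For (ii), take $v,w\ge0$ with $vw=0$ almost everywhere. Expanding the integrand of \eqref{jump} gives
\[
\big(v(x)-v(xz)\big)\big(w(x)-w(xz)\big)=v(x)w(x)+v(xz)w(xz)-v(x)w(xz)-v(xz)w(x).
\]
We need the first two terms to vanish for almost every $(x,z)$. For $v(xz)w(xz)$ this holds because the map $(x,z)\mapsto xz$ preserves null sets: Fubini's theorem together with the invariance of Haar measure shows that $\{(x,z):xz\in N\}$ is null whenever $N$ is null. What remains is $-v(x)w(xz)-v(xz)w(x)\le0$, integrated against a nonnegative kernel. For $\alpha<2$, absolute convergence of \eqref{jump} then gives $\E_\alpha(v,w)\le0$. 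For $\alpha=2$, each approximating integral in \eqref{jump22} is $\le0$, and so is the limit.

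For (iii), write $v=v^+-v^-$, where $v^\pm$ are images of $v$ under $1$-Lipschitz maps vanishing at $0$. By (i), both lie in $\mathsf V_\alpha$. Bilinearity gives $\E_\alpha(v,v^-)=\E_\alpha(v^+,v^-)-\E_\alpha(v^-)$, and $\E_\alpha(v^+,v^-)\le0$ by (ii) since $v^+v^-=0$. Hence $\E_\alpha(v,v^-)\le-\E_\alpha(v^-)\le0$.

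The only delicate points are bookkeeping. One is the null-set issue under $(x,z)\mapsto xz$ in (ii). The other is making sure that for $\alpha=2$ we pass to the limit only after establishing the inequality at fixed $t$, using the existence of the limits guaranteed by Lemma~\ref{jrep}(ii).
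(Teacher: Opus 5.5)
Your proposal is correct and follows the paper's proof essentially verbatim. Both use the difference representations of Lemma~\ref{jrep}. Part (i) comes from the pointwise contraction $|\Phi(a)-\Phi(b)|\le|a-b|$, part (ii) from the sign of the expanded integrand $-v(x)w(xz)-v(xz)w(x)\le 0$, and part (iii) from $v=v^+-v^-$ combined with (ii). You also make explicit two points the paper leaves implicit: the vanishing of $v(xz)w(xz)$ for almost every $(x,z)$ via the null-set argument, and, for $\alpha=2$, proving the inequality at fixed $t$ before passing to the limit.
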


\begin{proof}
For (i), note that $|\Phi(v)|\leq|v|$ because $\Phi(0)=0$, so
$\Phi(v)\in L^2(\G;\mathbb{C})$. Formula \eqref{jumpp} for $\alpha \in (0,2)$ or formula \eqref{jump22p} for $\alpha = 2$, and
$|\Phi(a)-\Phi(b)|\leq|a-b|$ show that
$\Phi(v)\in\mathsf V_\alpha$. If $v,w\geq0$ and $vw=0$, then
\[
(v(x)-v(xz))\overline{(w(x)-w(xz)) } 
 =-v(x)w(xz)-v(xz)w(x)\leq0,
\]
which proves (ii). Finally, for real-valued $v$, the contractions
$v\mapsto v^+ := 
\max\{v,0\}$, $v\mapsto v^- := -\min\{v,0\}$, and (i) show that $v^\pm\in\mathsf V_\alpha$. Since
$v=v^+-v^-$, applying (ii) to $v^+$ and $v^-$ gives (iii).
\end{proof}

\subsection{Cores, cutoffs, and interpolation}

The operator $\Lop$ on $C_c^\infty(\G;\mathbb{C})$ is essentially self-adjoint. We
need the following precise fractional core property:
\begin{equation}\label{core}
 \overline{C_c^\infty(\G;\mathbb{C})}^{\,\sqrt{\|\cdot\|_2^2+\E_\alpha(\cdot)}}
 =\mathsf V_\alpha.
\end{equation}
Indeed, in Folland's notation the Sobolev space $S_a^2$ is
$D(\Lop^{a/2})$ with its graph norm. Thus,
\cite[Theorem~(4.5)]{Fol75}, applied with $p=2$ and $a=\alpha/2$, 
states exactly that $C_c^\infty(\G;\mathbb{C})$ is dense in
$D(\Lop^{\alpha/4})=\mathsf V_\alpha$ for the form norm or equivalently the graph norm.  

\medskip

The following cutoff estimate is needed to recover mass on the whole group in the proof of Theorem~\ref{dcmp}.

\begin{lemma}\label{cut}
Let $0<\alpha\le2$. Choose $\chi\in C_c^\infty(\G)$ such that
$0\leq\chi\leq1$ and $\chi=1$ in a neighborhood of the identity, and set
$\chi_R=\chi\circ\delta_{R^{-1}}$ for $R\geq1$. Then
$\chi_R(x)\to1$ for every $x\in\G$ and
\begin{equation}\label{cest}
 \|\Lop^{\alpha/2}\chi_R\|_\infty
 =R^{-\alpha}\|\Lop^{\alpha/2}\chi\|_\infty
 \leq C_{\chi,\alpha}R^{-\alpha}.
\end{equation}
\end{lemma}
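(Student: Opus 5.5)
The plan is to obtain the equality in \eqref{cest} from the dilation structure, and then to bound $\|\Lop^{\alpha/2}\chi\|_\infty$ separately.

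\emph{Pointwise convergence.} Since $\delta_{R^{-1}}x\to e$ as $R\to\infty$, and $\chi=1$ on a neighborhood of $e$, we get $\chi_R(x)=1$ for all large $R$. Hence $\chi_R(x)\to1$ for every $x\in\G$.

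\emph{Scaling identity.} Write $U_R v:=v\circ\delta_{R^{-1}}$. Each $X_j$ is homogeneous of degree one, so
\[
X_j(v\circ\delta_{R^{-1}})=R^{-1}(X_jv)\circ\delta_{R^{-1}},
\qquad\text{hence}\qquad
\Lop U_R=R^{-2}U_R\Lop .
\]
The map $R^{-Q/2}U_R$ is unitary on $L^2(\G)$, because $|\delta_rE|=r^Q|E|$. Conjugating by it, the spectral calculus gives $\Phi(\Lop)U_R=U_R\Phi(R^{-2}\Lop)$ for bounded Borel $\Phi$, and therefore $\Lop^{\alpha/2}U_R=R^{-\alpha}U_R\Lop^{\alpha/2}$ on $D(\Lop^{\alpha/2})$. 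The same identity can be read off from the heat kernel scaling \eqref{hsca} via the subordination formula. Since $\chi\in C_c^\infty\subset D(\Lop)\subset D(\Lop^{\alpha/2})$, we obtain $\Lop^{\alpha/2}\chi_R=R^{-\alpha}(\Lop^{\alpha/2}\chi)\circ\delta_{R^{-1}}$. Composing with a bijection does not change the sup norm, which proves the equality in \eqref{cest}.

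\emph{Boundedness of $\Lop^{\alpha/2}\chi$.} This is the main step. For $\alpha=2$, $\Lop\chi=-\sum_j X_j^2\chi$ is in $C_c^\infty$, so it is bounded. For $0<\alpha<2$, I would use the representation \eqref{scal} in operator form,
\[
\Lop^{\alpha/2}\chi=c_\alpha\int_0^\infty(\chi-P_t\chi)\,\frac{dt}{t^{1+\alpha/2}},
\]
which converges in $L^2$ since $\chi\in D(\Lop)$. I would then estimate the integrand pointwise, splitting at $t=1$.
\begin{itemize}
\item For $t\ge1$: since $0\le\chi\le1$ and $P_t$ is Markovian, $|\chi-P_t\chi|\le1$, and $\int_1^\infty t^{-1-\alpha/2}\,dt<\infty$.
\item For $t<1$: write $\chi-P_t\chi=\int_0^tP_s\Lop\chi\,ds$. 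Since $P_s$ contracts $L^\infty$, this gives $\|\chi-P_t\chi\|_\infty\le t\|\Lop\chi\|_\infty$, and $\int_0^1 t^{-\alpha/2}\,dt<\infty$ because $\alpha<2$.
\end{itemize}
The Bochner integral therefore converges absolutely in $L^\infty$ as well, and its limit coincides a.e.\ with the $L^2$ limit. This yields $\|\Lop^{\alpha/2}\chi\|_\infty\le c_\alpha\bigl(2\|\Lop\chi\|_\infty/(2-\alpha)+2/\alpha\bigr)=:C_{\chi,\alpha}$.

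The only delicate point is the identity $\chi-P_t\chi=\int_0^tP_s\Lop\chi\,ds$ with an $L^\infty$ bound. It holds in $L^2$ because $\chi\in D(\Lop)$. Each $P_s\Lop\chi=(\Lop\chi)*p_s$ is bounded by $\|\Lop\chi\|_\infty$, since the kernel $p_s$ has mass one. This is enough to justify the pointwise bound almost everywhere.
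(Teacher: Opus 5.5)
Your proof is correct and follows essentially the same route as the paper. The scaling identity comes from the unitary conjugation $U_R^{-1}\Lop U_R=R^{-2}\Lop$ passed through the spectral calculus, and the $L^\infty$ bound on $\Lop^{\alpha/2}\chi$ comes from the subordination formula split at $t=1$, using $\chi-P_t\chi=\int_0^tP_s\Lop\chi\,ds$ for small $t$. You also verify the pointwise convergence $\chi_R\to1$, which the paper leaves implicit, and your bound $|\chi-P_t\chi|\le1$ for $t\ge1$ is slightly sharper than the paper's $2\|\chi\|_\infty$.
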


\begin{proof}
Note that
$\chi\in D(\Lop^{\alpha/2})$. We first show that $\Lop^{\alpha/2} \chi \in L^\infty (\G)$. For $\alpha = 2$, it is clear. In the remaining case $\alpha \in (0,2)$, applying \eqref{scal} to the spectral resolution of $\Lop$ gives the following formula for fractional powers:
\begin{equation}\label{bal}
 \Lop^{\alpha/2}\chi
 =c_\alpha\int_0^\infty (\chi-P_t \chi)\,
 \frac{dt}{t^{1+\alpha/2}}.
\end{equation}
The integral converges in $L^2(\G;\mathbb{C})$ and it also converges in $L^\infty(\G;\mathbb{C})$, since the contractivity of
$P_t$ on $L^\infty(\G;\mathbb{C})$ gives
\begin{align}\label{estdif}
 \|\chi-P_t\chi\|_\infty
 \leq
 \begin{cases}
 t\|\Lop\chi\|_\infty,&0<t\leq1,\\
 2\|\chi\|_\infty,&t>1.
 \end{cases}
\end{align}
To be more precise, to prove \eqref{estdif}, for the case  $0 < t\leq1$ one uses
$\chi-P_t\chi=\int_0^tP_r\Lop\chi\,dr$.
The integral in  \eqref{bal} near zero is finite because $\alpha<2$, and  near
infinity is finite because $\alpha>0$. Now if
$D_Rh=h\circ\delta_{R^{-1}}$, then
$U_R := R^{-Q/2}D_R$ is unitary on $L^2(\G;\mathbb{C})$ and
$\Lop D_R=R^{-2}D_R\Lop$ on $C_c^\infty(\G;\mathbb{C})$, which implies
\[
U_R^{-1} \Lop U_R = R^{-2}  \Lop, \qquad \forall \, R > 0.
\]Then the spectral calculus therefore gives
\[
U_R^{-1} \Lop^{\alpha/2} U_R = R^{-\alpha} \Lop^{\alpha/2}, \qquad \forall \, \alpha \in (0,2], R > 0,
\]
which is exactly
\[
 \Lop^{\alpha/2}(\chi\circ\delta_{R^{-1}})
 =R^{-\alpha}(\Lop^{\alpha/2}\chi)\circ\delta_{R^{-1}},  \qquad \forall \, \alpha \in (0,2], R > 0.
\]
This proves \eqref{cest}.
\end{proof}

We shall also use
the spectral interpolation inequality
\begin{equation}\label{intr}
 \|\Lop^\theta h\|_2
 \leq \|h\|_2^{1-\theta}\|\Lop h\|_2^\theta,
 \qquad h\in D(\Lop),\quad 0\le \theta \le1,
\end{equation}
which follows immediately from the spectral theorem and H\"older's
inequality for the spectral measure. Furthermore we need the following Nash estimate in the proof of Theorem \ref{dcmp}.

\begin{lemma}\label{nash}
For $0<\alpha\le 2$, there is a finite constant $C_{\G,\alpha}$ such that
\begin{equation}\label{nshq}
 \|v\|_2^2
 \leq C_{\G,\alpha}
 \|v\|_1^{\frac{2\alpha}{Q+\alpha}}
 \E_\alpha(v)^{\frac{Q}{Q+\alpha}}
\end{equation}
for every $v\in L^1(\G;\mathbb{C})\cap\mathsf V_\alpha$.
\end{lemma}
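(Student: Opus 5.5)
The plan is to use the standard Nash argument based on heat semigroup ultracontractivity. Only the constant $C_{\G,\alpha}$ matters, and it may depend on $\G$. The key input is the scaling \eqref{hsca}, which gives
\[
\|P_t v\|_\infty \le \|p_t\|_\infty \|v\|_1 = t^{-Q/2}\|p_1\|_\infty\|v\|_1 .
\]
Since $p_1$ is Schwartz, $\|p_1\|_\infty<\infty$.

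First I bound $\|P_tv\|_2$. Combining the $L^1\to L^\infty$ estimate with $L^1$-contractivity, or simply by Young's inequality,
\[
\|P_tv\|_2\le \|p_t\|_2\|v\|_1=t^{-Q/4}\|p_1\|_2\|v\|_1 .
\]
Next I control the defect $v-P_tv$ by the fractional energy through the spectral theorem. The scalar bound $(1-e^{-t\lambda})^2\le 1-e^{-t\lambda}\le \min\{1,t\lambda\}\le (t\lambda)^{\alpha/2}$ holds for $0<\alpha\le2$. Integrating it against $d\langle E_\lambda v,v\rangle_2$ gives
\[
\|v-P_tv\|_2^2\le t^{\alpha/2}\E_\alpha(v).
\]
Combining the two estimates,
\[
\|v\|_2\le \|v-P_tv\|_2+\|P_tv\|_2\le t^{\alpha/4}\E_\alpha(v)^{1/2}+t^{-Q/4}\|p_1\|_2\|v\|_1 ,\qquad t>0.
\]

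Finally I optimize in $t$. If $\E_\alpha(v)=0$, then $v=0$: the spectral measure of $v$ is concentrated at $\lambda=0$, and $\ker\Lop=\{0\}$ by Proposition~\ref{l2iso}. So I may assume $\E_\alpha(v)>0$. I choose
\[
t^{(Q+\alpha)/4}=\frac{\|v\|_1}{\E_\alpha(v)^{1/2}}.
\]
Then both terms are comparable to $\|v\|_1^{\alpha/(Q+\alpha)}\E_\alpha(v)^{Q/(2(Q+\alpha))}$. Squaring gives \eqref{nshq} with $C_{\G,\alpha}=(1+\|p_1\|_2)^2$.

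The only point needing care is the finiteness of $\|p_1\|_2$, and this follows from the Schwartz property, or from the Gaussian bound \eqref{gaus}. I do not expect a real obstacle. Complex-valued $v$ is handled verbatim, since every step is an $L^2$ spectral or Young-type bound. The case $\alpha=2$ is covered by the same scalar inequality $1-e^{-s}\le s$.
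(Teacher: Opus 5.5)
Your proof is correct and is essentially the paper's Nash argument: split off $P_tv$, control it by the heat-kernel scaling \eqref{hsca}, bound the defect through the spectral inequality $1-e^{-s}\le s^{\alpha/2}$, then optimize in $t$. The only differences are cosmetic: you work at the level of norms, with the triangle inequality and the $L^1\to L^2$ bound $\|p_t\|_2=t^{-Q/4}\|p_1\|_2$, whereas the paper splits the quadratic form $\|v\|_2^2=\langle P_tv,v\rangle_2+\langle (I-P_t)v,v\rangle_2$ and uses the $L^1\to L^\infty$ bound; your version yields the explicit constant $(1+\|p_1\|_2)^2$.
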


\begin{proof}
From Young's convolution inequality and the first equation of \eqref{hsca} it follows that
\[
 \|P_t v\|_\infty
 \leq\|p_t\|_\infty\|v\|_1
 =t^{-Q/2}\|p_1\|_\infty\|v\|_1.
\]
Hence $\langle P_tv,v\rangle_2\leq  \|P_t v\|_\infty \|v\|_1 \le 
Ct^{-Q/2}\|v\|_1^2$. The spectral theorem and the elementary bound
$1-e^{-r}\leq C_\alpha r^{\alpha/2}$ give
\[
 \begin{split}
 \|v\|_2^2
 &=\langle P_tv,v\rangle_2
   +\langle(I-P_t)v,v\rangle_2\\
 &\leq Ct^{-Q/2}\|v\|_1^2
   +C_\alpha t^{\alpha/2}\E_\alpha(v).
 \end{split}
\]
We split the proof into three cases: (i). if $\|v\|_1\E_\alpha(v) > 0$,
optimizing over $t>0$ proves \eqref{nshq}; (ii).  If $\|v\|_1=0$, then $v=0$ and \eqref{nshq} holds automatically; (iii). If  $\E_\alpha(v)=0$, we have $\Lop^{\alpha/4} v = 0$, which implies $\Lop v = 0$. Proposition~\ref{l2iso} (see also \eqref{jumpp} and \eqref{jump22p}) gives $v=0$ again. 
\end{proof}

\subsection{A horizontal truncation identity}\label{ssAhti}

Recall that we use the notation $v^+ := 
\max\{v,0\}$ and $v^- := -\min\{v,0\}$. Moreover, we use $\1_A$ to denote the characteristic function of a set $A$, namely
\[
\1_A(x) := \begin{cases}
    1, \qquad x \in A, \\
    0, \qquad x \notin A.
\end{cases}
\]
We shall also use the following horizontal truncation lemma. 

\begin{lemma}\label{stam}
If $v\in W_{H,\mathrm{loc}}^{1,1}(\G)$ is real-valued and $c\in\mathbb R$,
then
$(v-c)^\pm\in W_{H,\mathrm{loc}}^{1,1}(\G)$ and
\[
 \gradH(v-c)^+=\1_{\{v>c\}}\gradH v,
 \qquad
 \gradH(v-c)^-=-\1_{\{v<c\}}\gradH v.
\]
Consequently, $\gradH v=0$ almost everywhere on $\{v=c\}$.
\end{lemma}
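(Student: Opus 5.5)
We follow the classical Euclidean route (Stampacchia's truncation lemma), adapted to the horizontal vector fields by regularizing with a smooth convex approximation of the positive part. By replacing $v$ by $v-c$, it suffices to treat $c=0$. Moreover $(v)^-=(-v)^+$, so the statement for $v^-$ follows from the one for $v^+$ applied to $-v$. For the last assertion we use the two formulas together: $|v|=v^++v^-$, so $\gradH v = \1_{\{v>0\}}\gradH v+\1_{\{v<0\}}\gradH v$ almost everywhere. This only follows once we also know $\gradH v^+-\gradH v^- = \gradH v$. That identity holds because $v=v^+-v^-$ and $\gradH$ is linear on $W^{1,1}_{H,\mathrm{loc}}$. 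Hence $\1_{\{v=0\}}\gradH v=0$ almost everywhere.

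For the main identity, fix $\varepsilon>0$ and set $G_\varepsilon(t)=\sqrt{t^2+\varepsilon^2}-\varepsilon$ for $t>0$ and $G_\varepsilon(t)=0$ for $t\le0$. This function is $C^1$ with bounded derivative $G_\varepsilon'(t)=\frac{t}{\sqrt{t^2+\varepsilon^2}}\1_{\{t>0\}}\in[0,1)$. The first step is a chain rule: for $G\in C^1(\mathbb R)$ with $G'$ bounded and $G(0)=0$, we have $G\circ v\in W^{1,1}_{H,\mathrm{loc}}$ and $X_j(G\circ v)=G'(v)X_jv$. To prove it, mollify $v$ by group convolution with an approximate identity $\phi_\delta$. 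The right choice is $v_\delta = v*\phi_\delta$, where $(v*\phi)(x)=\int v(y)\phi(y^{-1}x)\,dy$. Left-invariant fields commute with right convolution, so $X_j(v*\phi_\delta)=(X_jv)*\phi_\delta$. Hence $v_\delta\to v$ and $X_jv_\delta\to X_jv$ in $L^1_{\mathrm{loc}}$, and a subsequence converges almost everywhere. For smooth $v_\delta$ the classical chain rule gives $X_j G(v_\delta)=G'(v_\delta)X_jv_\delta$. We then pass to the limit in the distributional pairing against a test function $\psi$, using $\int G(v_\delta)X_j^*\psi$ with $X_j^*=-X_j$ by unimodularity. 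Convergence follows from the Lipschitz bound on $G$ and from dominated convergence, since $G'$ is bounded and continuous.

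Next we let $\varepsilon\to0$. We have $G_\varepsilon(v)\to v^+$ pointwise, with $|G_\varepsilon(v)|\le|v|$, so the convergence also holds in $L^1_{\mathrm{loc}}$. On $\{v>0\}$ we have $G_\varepsilon'(v)\to 1$, and on $\{v\le0\}$ it equals $0$. Dominated convergence, with dominating function $|X_jv|$, then gives $X_jG_\varepsilon(v)\to\1_{\{v>0\}}X_jv$ in $L^1_{\mathrm{loc}}$. Closedness of distributional derivatives yields $X_jv^+=\1_{\{v>0\}}X_jv$, so $v^+\in W^{1,1}_{H,\mathrm{loc}}$.

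The main obstacle is the chain rule step. One must check that the mollification commutes with the horizontal fields, which requires the convolution side to match left invariance. One also needs the boundedness of $G'$ so that the composition stays in $L^1_{\mathrm{loc}}$. We handle this with the right-convolution choice described above.
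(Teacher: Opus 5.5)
\textbf{Gap in the mollification step.} The step you identify as the main obstacle is done on the wrong side, and as written the identity is false. With $(v*\phi)(x)=\int_\G v(y)\phi(y^{-1}x)\,dy$, the variable $x$ enters only through the left translate $x\mapsto\phi(y^{-1}x)$ of $\phi$. Left invariance therefore puts the derivative on the mollifier:
\[
X_j(v*\phi_\delta)=v*(X_j\phi_\delta).
\]
By contrast, $(X_jv)*\phi_\delta=v*(\widetilde X_j\phi_\delta)$, where $\widetilde X_j$ is the right-invariant field equal to $X_j$ at $e$.

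Equivalently, $v*\phi_\delta(x)=\int_\G v(xz)\phi_\delta(z^{-1})\,dz$ averages \emph{right} translates of $v$. Left-invariant fields commute with left translations, not right ones. On the Heisenberg group, for instance, $X_1(v*\phi_\delta)-(X_1v)*\phi_\delta$ involves the vertical derivative $[X_1,X_2]v$. So $X_jv_\delta=(X_jv)*\phi_\delta$ fails, and your claim $X_jv_\delta\to X_jv$ in $L^1_{\mathrm{loc}}$ would need a Friedrichs-type commutator estimate that you do not supply.

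\textbf{How to repair it.} Mollify on the left: set $v_\delta:=\phi_\delta*v$, i.e. $v_\delta(x)=\int_\G\phi_\delta(y)\,v(y^{-1}x)\,dy$, which averages the left translates $v\circ L_{y^{-1}}$. Test against $\psi\in C_c^\infty(\G)$, and use left invariance of Haar measure together with $X_j(\psi\circ L_y)=(X_j\psi)\circ L_y$. This gives $X_j(\phi_\delta*v)=\phi_\delta*(X_jv)$ in the sense of distributions. Moreover $v_\delta$ is smooth, and $v_\delta\to v$ and $X_jv_\delta\to X_jv$ in $L^1_{\mathrm{loc}}$, by continuity of left translation in $L^1_{\mathrm{loc}}$ as $\operatorname{supp}\phi_\delta$ shrinks to $e$.

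With this change the rest of your argument is correct:
\begin{itemize}
\item the limit in the $C^1$ chain rule;
\item the passage $\varepsilon\to0$ with $G_\varepsilon$;
\item the reductions via $v-c$ and $v^-=(-v)^+$;
\item the final claim, obtained from $\gradH v=\gradH v^+-\gradH v^-$.
\end{itemize}
The aside about $|v|=v^++v^-$ plays no role and can be dropped.

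\textbf{Comparison with the paper.} The repaired argument is a self-contained, group-theoretic alternative to the paper's proof. The paper instead works in exponential coordinates on each $\Omega_0\Subset\G$, quotes Garofalo's truncation result (Proposition~5.4) for general H\"ormander vector fields with $p=1$, and then subtracts the two formulas exactly as you do. Your version uses left translations to get an exact commutation, so no commutator estimate is needed.
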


\begin{proof}
Let $\Omega_0\Subset\G$ be an arbitrary open set  and put $q=v-c$. Then
$q\in W_H^{1,1}(\Omega_0)$. In exponential coordinates, the horizontal
fields are smooth and satisfy H\"ormander's condition, so \cite[Proposition~5.4]{Gar16} with $p=1$ therein, implies that
\[
 \gradH q^+=\1_{\{q>0\}}\gradH q,
 \qquad
 \gradH q^-=-\1_{\{q<0\}}\gradH q
\]
almost everywhere in $\Omega_0$. Since $q=q^+-q^-$, subtracting these two
identities shows that $\gradH q=0$ almost everywhere on $\{q=0\}$.
Equivalently, $\gradH v=0$ almost everywhere on $\{v=c\}\cap\Omega_0$.
Because $\Omega_0\Subset\G$ was arbitrary, all three assertions hold globally
on $\G$.
\end{proof}

\section{Proof of Theorem  \ref{mthm}}\label{pf1}

We now derive the endpoint estimate from Theorem~\ref{dcmp}. The argument shows why exact mass conservation and localization of the horizontal gradient are the two essential features of the decomposition.

\begin{proof}[Proof of Theorem~\ref{mthm}]
First suppose that $f\in L^1(\G)\cap L^2(\G)$ is real-valued. Write
$f=f^+-f^-$ with $f^+ := 
\max\{f,0\}$, $f^- := -\min\{f,0\}$, and fix $\lambda>0$. Note that $f^\pm \in L^1(\G)\cap L^2(\G)$. Applying Theorem~\ref{dcmp} with $\alpha=1$ and level $\lambda$ separately to $f^+$ and $f^-$ respectively, we obtain
\[
 f^\pm=\mu_\pm+\Lop^{1/2}u_\pm,
 \qquad
 0\leq\mu_\pm\leq\lambda,
\]
and measurable sets $\Omega_\pm=\{u_\pm>0\}$ such that
\[
 \lambda|\Omega_\pm|\leq\|f^\pm\|_1,
 \qquad
 \gradH u_\pm=0
 \quad\text{almost everywhere on }\G\setminus\Omega_\pm.
\]
Set
\[
 \mu:=\mu_+-\mu_-,
 \qquad u:=u_+-u_-,
 \qquad \Omega:=\Omega_+\cup\Omega_-.
\]
By the construction of the Riesz transform in Proposition~\ref{l2iso}, we get
\begin{equation}
 \Rvec f=\Rvec\mu+\gradH u.
\end{equation}
Since $\gradH u=0$ almost everywhere on $\G\setminus\Omega$, we have up to a set of measure zero
\[
 \{x:|\Rvec f(x)|>\lambda\}
 \subset\Omega\cup\{x:|\Rvec\mu(x)|>\lambda\}.
\]
Moreover,
\begin{equation}\label{omeg}
 \lambda|\Omega|
 \leq\|f^+\|_1+\|f^-\|_1
 =\|f\|_1.
\end{equation}
Because $0\leq\mu_\pm\leq\lambda$, we have $|\mu|\leq\lambda$ and
\[
 \int_\G|\mu|
 \leq\int_\G\mu_++\int_\G\mu_-
 =\|\mu_+\|_1+\|\mu_-\|_1
 =\|f^+\|_1+\|f^-\|_1
 =\|f\|_1.
\]
Here the penultimate equality follows from
Theorem~\ref{dcmp} (i). Consequently,
Proposition~\ref{l2iso} and Chebyshev's inequality imply
\begin{equation}\label{good}
 \lambda\big|\{x:|\Rvec\mu(x)|>\lambda\}\big|
 \leq\lambda^{-1}\|\Rvec\mu\|_2^2
 =\lambda^{-1}\|\mu\|_2^2 \leq \|\mu\|_1 
 \leq\|f\|_1.
\end{equation}
Combining \eqref{omeg} and \eqref{good} proves \eqref{wtyp} for
$f\in L^1\cap L^2$. A standard extension argument gives the rest of the assertion.

\end{proof}

Furthermore, we have the counterparts of Theorem \ref{mthm} and Corollary \ref{lpbd}.

\begin{theorem}\label{mthmC}
Let $\G$ be a stratified Lie group. The operator
$\Rvec=\gradH\Lop^{-1/2}$ extends uniquely to a continuous linear operator from
$L^1(\G;\mathbb{C})$ to $L^{1,\infty}(\G; \mathbb{C}^m)$, and
\begin{equation}\label{wtypC}
 \|\Rvec f\|_{L^{1,\infty}(\G;\mathbb C^m)}
 :=\sup_{\lambda>0}\lambda
 \big|\{x\in\G:|\Rvec f(x)|>\lambda\}\big|
 \leq 4\|f\|_{L^1(\G;\mathbb{C})}.
\end{equation}
The constant $4$ is independent of $m$, the homogeneous dimension, the step, and the group structure of $\G$.
\end{theorem}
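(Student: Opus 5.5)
The plan is to reduce to the real-valued case, Theorem~\ref{mthm}, by splitting $f$ into real and imaginary parts. The key point is that $\Rvec$ commutes with complex conjugation. Indeed, $\Lop$ has real coefficients, so its functional calculus (in particular $\Lop^{-1/2}$ on $\operatorname{Ran}(\Lop^{1/2})$) maps real functions to real functions, and so does $\gradH$. Hence the isometry $U$ of Proposition~\ref{l2iso} satisfies $\Rvec\bar f=\overline{\Rvec f}$. Writing $f=g+ih$ with $g=\operatorname{Re}f$ and $h=\operatorname{Im}f$ real-valued, we get
\[
\Rvec f=\Rvec g+i\Rvec h
\]
for $f\in L^1(\G;\mathbb C)\cap L^2(\G;\mathbb C)$, where $\Rvec g,\Rvec h$ take values in $\mathbb R^m$. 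Moreover
\[
\|g\|_1+\|h\|_1\le 2\|f\|_1,
\]
since $|g|,|h|\le|f|$ pointwise.

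Fix $\lambda>0$. Because $|\Rvec f|^2=|\Rvec g|^2+|\Rvec h|^2$ as an element of $\mathbb C^m$, we have $|\Rvec f|\le|\Rvec g|+|\Rvec h|$. Thus $\{|\Rvec f|>\lambda\}\subset\{|\Rvec g|>\lambda/2\}\cup\{|\Rvec h|>\lambda/2\}$. Applying \eqref{wtyp} at level $\lambda/2$ gives
\[
\lambda|\{|\Rvec f|>\lambda\}|\le 2\cdot 2(\|g\|_1+\|h\|_1).
\]
This only yields $8\|f\|_1$, which is too crude, so we must be more careful.

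To reach $4$, it is better to rerun the proof of Theorem~\ref{mthm} than to quote its conclusion. Apply Theorem~\ref{dcmp} with $\alpha=1$ at level $\lambda$ to each of the four nonnegative functions $g^\pm,h^\pm$. This gives $f=\mu+\Lop^{1/2}u$, where $\mu=(\mu_{g+}-\mu_{g-})+i(\mu_{h+}-\mu_{h-})$ and $\gradH u=0$ off $\Omega:=\Omega_{g+}\cup\Omega_{g-}\cup\Omega_{h+}\cup\Omega_{h-}$. Then
\[
\lambda|\Omega|\le\|g\|_1+\|h\|_1\le 2\|f\|_1 .
\]
For the good part, $|\mu|\le\sqrt2\,\lambda$ pointwise and $\|\mu\|_1\le\|g\|_1+\|h\|_1\le2\|f\|_1$. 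Hence $\|\mu\|_2^2\le\sqrt2\lambda\cdot2\|f\|_1$, and Chebyshev together with the isometry \eqref{riso} gives
\[
\lambda|\{|\Rvec\mu|>\lambda\}|\le 2\sqrt2\|f\|_1 .
\]
This total still exceeds $4$. The sharper route is to bound $\|\mu\|_2^2=\|\operatorname{Re}\mu\|_2^2+\|\operatorname{Im}\mu\|_2^2$ term by term, using $|\operatorname{Re}\mu|\le\lambda$ and $|\operatorname{Im}\mu|\le\lambda$ separately. That gives $\|\mu\|_2^2\le\lambda(\|g\|_1+\|h\|_1)\le 2\lambda\|f\|_1$, and hence $\lambda|\{|\Rvec\mu|>\lambda\}|\le 2\|f\|_1$. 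Adding the bound for $\Omega$ yields the constant $4$.

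The remaining step is the extension to all of $L^1(\G;\mathbb C)$. I would use density of $L^1\cap L^2$ together with the standard quasi-norm completeness of $L^{1,\infty}$, exactly as at the end of the proof of Theorem~\ref{mthm}; uniqueness follows from continuity. The main point needing care is the real-structure identity $\Rvec(g+ih)=\Rvec g+i\Rvec h$ with $\Rvec g,\Rvec h$ real-valued. I would verify this on $\operatorname{Ran}(\Lop^{1/2})$ via $U(\Lop^{1/2}k)=\gradH k$, using that $\Lop^{1/2}$ commutes with conjugation (real spectral calculus of a real operator), and then pass to closure.
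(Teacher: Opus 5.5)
Your argument is correct, but it is not the paper's route.

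\textbf{The paper's argument.} The paper uses Theorem~\ref{mthm} as a black box. Writing $f=a+ib$, it uses that $\Rvec a,\Rvec b$ are $\mathbb R^m$-valued, so $|\Rvec f|^2=|\Rvec a|^2+|\Rvec b|^2$. Hence $\{|\Rvec f|>\lambda\}\subset\{|\Rvec a|>\lambda/\sqrt2\}\cup\{|\Rvec b|>\lambda/\sqrt2\}$, and the real estimate gives $2\sqrt2(\|a\|_1+\|b\|_1)$. The pointwise bound $|a|+|b|\le\sqrt2|f|$ then gives $\|a\|_1+\|b\|_1\le\sqrt2\|f\|_1$, which yields exactly $4$.

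\textbf{Your first attempt.} You dismissed the black-box route too early. Your constant $8$ came from two losses. You used the triangle inequality with threshold $\lambda/2$, even though you had already written down the orthogonal splitting $|\Rvec f|^2=|\Rvec g|^2+|\Rvec h|^2$. You also used the crude bound $\|g\|_1+\|h\|_1\le2\|f\|_1$ instead of $\sqrt2\|f\|_1$.

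\textbf{Your refined argument.} Rerunning the proof of Theorem~\ref{mthm} with Theorem~\ref{dcmp} applied to $g^\pm,h^\pm$ is valid. You get $\Rvec f=\Rvec\mu+\gradH u$ with $\gradH u=0$ off $\Omega$, and $\lambda|\Omega|\le\|g\|_1+\|h\|_1$. For the good part, $\|\mu\|_2^2=\|\operatorname{Re}\mu\|_2^2+\|\operatorname{Im}\mu\|_2^2\le\lambda(\|g\|_1+\|h\|_1)$, and the complex isometry \eqref{riso} finishes it.

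This route never uses the real-structure identity $\Rvec\bar f=\overline{\Rvec f}$ that you flagged as the delicate point. It needs only $\mathbb C$-linearity of $\Rvec$, the complex isometry, and $\gradH u=0$ off $\Omega$. Combined with $\|g\|_1+\|h\|_1\le\sqrt2\|f\|_1$, it actually yields $2\sqrt2\|f\|_1$, better than the stated constant.

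\textbf{Comparison.} The paper's argument buys brevity and modularity: three lines, with no reopening of the decomposition. Yours buys a sharper constant and independence from the real structure of $\Rvec$.
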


\begin{proof}
For any $f \in L^1(\G;\mathbb{C})$, we can write $f = a + ib$ with $a,b \in L^1(\G)$. Thus we obtain
\[
 \{|\Rvec f|>\lambda\}
 \subset
 \{|\Rvec a|>\lambda/\sqrt2\}
 \cup
 \{|\Rvec b|>\lambda/\sqrt2\}.
\]
Applying the estimate for real-valued functions and using
$\|a\|_1+\|b\|_1\leq\sqrt2\|f\|_1$ show the validity of \eqref{wtypC}.
\end{proof}

\begin{corollary}\label{lpbdC}
For $1<p<2$ there exists a constant $D_p > 0$ which depends only  on $p$ such that for  $f\in L^p(\G;\mathbb{C})$ we have
\begin{equation}\label{lpupC}
 \|\Rvec f\|_{L^p(\G;\mathbb C^m)}
 \leq D_p \|f\|_{L^p(\G;\mathbb{C})}.
\end{equation}
In particular, the constant $D_p$ is independent of $m$, the homogeneous dimension, the step, and the group structure of $\G$.
\end{corollary}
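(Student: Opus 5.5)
We prove Corollary~\ref{lpbdC} by Marcinkiewicz interpolation between the weak type $(1,1)$ bound of Theorem~\ref{mthmC} and the $L^2$ isometry of Proposition~\ref{l2iso}. Some care is needed because $\Rvec$ is $\mathbb C^m$-valued and is a priori defined only on $L^2$ or on $L^1$.

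\textbf{Step 1: the two endpoint estimates.} On $L^1(\G;\mathbb C)\cap L^2(\G;\mathbb C)$, the operator $\Rvec$ is the restriction of the $L^2$ isometry. By Theorem~\ref{mthmC},
\[
\lambda\,|\{|\Rvec f|>\lambda\}|\le 4\|f\|_1 .
\]
By \eqref{riso} and Chebyshev's inequality,
\[
\lambda^2|\{|\Rvec f|>\lambda\}|\le \|\Rvec f\|_2^2=\|f\|_2^2 .
\]
The map $T f:=|\Rvec f|$ is sublinear, since $|\Rvec(f+g)|\le|\Rvec f|+|\Rvec g|$ pointwise. So we may apply the scalar Marcinkiewicz theorem to $T$. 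This sidesteps any vector-valued interpolation machinery.

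\textbf{Step 2: the Marcinkiewicz splitting, with explicit constants.} For $1<p<2$ and $f\in L^p$, fix $\lambda>0$ and split at height $\lambda$:
\[
f=f_1+f_2,\qquad f_1=f\1_{\{|f|>\lambda\}},\qquad f_2=f\1_{\{|f|\le\lambda\}} .
\]
Then $f_1\in L^1$ and $f_2\in L^2$, and linearity of the extension gives $\Rvec f=\Rvec f_1+\Rvec f_2$. This requires checking that the $L^1$ and $L^2$ extensions agree on $L^1\cap L^2$, which holds by construction in Theorem~\ref{mthmC} via density. We then bound
\[
|\{|\Rvec f|>\lambda\}|\le \frac{8}{\lambda}\int_{\{|f|>\lambda\}}|f|+\frac{4}{\lambda^2}\int_{\{|f|\le\lambda\}}|f|^2 ,
\]
multiply by $p\lambda^{p-1}$, and integrate in $\lambda$ using Fubini. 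This gives
\[
\|\Rvec f\|_p^p\le p\Bigl(\frac{8}{p-1}+\frac{4}{2-p}\Bigr)\|f\|_p^p .
\]
So we may take
\[
D_p=\Bigl(p\Bigl(\tfrac{8}{p-1}+\tfrac{4}{2-p}\Bigr)\Bigr)^{1/p},
\]
which visibly depends only on $p$.

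\textbf{Step 3: extension to all of $L^p$.} We first establish the bound on the dense class $L^1\cap L^2$, or alternatively on simple functions with finite-measure support. The operator then extends uniquely by continuity to $L^p(\G;\mathbb C)$, and this extension is consistent with the one in Step 2.

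The only delicate point is this well-definedness: $\Rvec f$ must make sense for $f\in L^p$ and be given by $\Rvec f_1+\Rvec f_2$. Working on the dense subclass first and extending afterwards handles it. Every constant involved comes from Theorem~\ref{mthmC} and \eqref{riso}, so $D_p$ is independent of $m$, $Q$, the step and the group structure of $\G$.
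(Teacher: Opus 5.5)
Your proposal is correct and follows essentially the paper's route: the paper gives no separate proof and simply invokes Marcinkiewicz interpolation between the weak-type $(1,1)$ bound of Theorem~\ref{mthmC} and the $L^2$ isometry \eqref{riso}. Your version fills in what the paper leaves implicit. Your constant $D_p=\bigl(p(\tfrac{8}{p-1}+\tfrac{4}{2-p})\bigr)^{1/p}$ checks out, and doing the splitting on the dense class $L^1\cap L^2$ before extending correctly handles the consistency of $\Rvec$ on $L^1+L^2$.
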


\section{Proof of Theorem  \ref{dcmp}}\label{pf2}

Throughout this section, we assume that $\lambda>0$, $0<\alpha\le2$ and $f$ is a fixed nonnegative function in $L^1(\G)\cap L^2(\G)$. All the function spaces in this section are real-valued (except those in Proposition \ref{reg}).

\medskip

Recall that $\mathsf{V}_\alpha= D(\Lop^{\alpha/4})$. We introduce the space
\begin{align*}
\K_\alpha:=\{v\in L^1(\G)\cap\mathsf V_\alpha:v\geq0\text{ almost everywhere}\},  
\end{align*}
and the functional 
\begin{equation}\label{engy}
	\mathcal J_\lambda^\alpha(v)
	:=\frac12\E_\alpha(v)-\int_\G(f-\lambda)v
	=\frac12\E_\alpha(v)-\int_\G fv+\lambda\|v\|_1, \qquad v\in\K_\alpha.
\end{equation}

\subsection{The minimizer and its variational inequality}

In the following we will use the direct method in the calculus of variations. To this end, we first establish the coercivity of the functional $\mathcal J_\lambda^\alpha$.

\begin{proposition}\label{coer}
For every $v\in \K_\alpha$, one has
\begin{equation}\label{coeq}
 \mathcal J_\lambda^\alpha(v)
 \geq\frac14\E_\alpha(v)
 +\frac\lambda2\|v\|_1-C_0,
\end{equation}
where
$$
C_0=\frac{Q^2}{(Q+\alpha)^2}\left(\frac{2\alpha}{\lambda(Q+\alpha)}\right)^{2\alpha/Q}(C_{\G,\alpha}\|f\|_2^2)^{(Q+\alpha)/Q},
$$
with the constant $C_{\G,\alpha}$ given in Lemma \ref{nash}.
\end{proposition}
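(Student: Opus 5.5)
We need to bound $\int f v$ from above. By Cauchy–Schwarz, $\int_\G fv\le \|f\|_2\|v\|_2$. Lemma~\ref{nash} then gives $\|v\|_2\le C_{\G,\alpha}^{1/2}\|v\|_1^{a}\E_\alpha(v)^{b}$, where $a=\frac{\alpha}{Q+\alpha}$ and $b=\frac{Q}{2(Q+\alpha)}$. Since $v\in\K_\alpha\subset L^1\cap\mathsf V_\alpha$, both factors are finite. Hence
\[
 \int_\G fv\le \bigl(C_{\G,\alpha}\|f\|_2^2\bigr)^{1/2}\,\|v\|_1^{\frac{\alpha}{Q+\alpha}}\,\E_\alpha(v)^{\frac{Q}{2(Q+\alpha)}} .
\]
The whole proof then reduces to absorbing this product into $\frac14\E_\alpha(v)+\frac\lambda2\|v\|_1$ plus a constant.

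For the absorption I would use a weighted Young inequality with three terms. The exponents satisfy $\frac{\alpha}{Q+\alpha}+\frac{Q}{2(Q+\alpha)}=\frac{Q+2\alpha}{2(Q+\alpha)}<1$, so the leftover exponent $\frac{Q}{2(Q+\alpha)}$ is positive and goes to the constant term. Concretely, write $A=\|v\|_1$, $B=\E_\alpha(v)$, $K=(C_{\G,\alpha}\|f\|_2^2)^{1/2}$. Apply Young with exponents $p_1=\frac{Q+\alpha}{\alpha}$ on $A$, $p_2=\frac{2(Q+\alpha)}{Q}$ on $B$, and $p_3=\frac{2(Q+\alpha)}{Q}$ on the constant; their reciprocals sum to $1$. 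Inserting weights, the product $K A^{\alpha/(Q+\alpha)}B^{Q/(2(Q+\alpha))}$ is at most $\frac{\lambda}{2}A+\frac14 B+C_0'$.

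Alternatively, one can do it in two steps, which is probably how the stated $C_0$ arises. First use $xy\le \frac14 y^2+x^2$ with $y=B^{1/2}$ and $x=KA^{\alpha/(Q+\alpha)}B^{\frac{Q}{2(Q+\alpha)}-\frac12}$. This does not quite work because the power of $B$ is negative, so I would instead split as $K A^{a}B^{b}\le \frac14 B+ c\,(KA^a)^{1/(1-b)}$. Here $\frac{1}{1-b}=\frac{2(Q+\alpha)}{Q+2\alpha}$, and the resulting power of $A$ is $\frac{2\alpha}{Q+2\alpha}<1$. A second Young step then gives $c'\,K^{2(Q+\alpha)/(Q+2\alpha)}A^{2\alpha/(Q+2\alpha)}\le\frac\lambda2 A+C_0$, with $C_0\sim \lambda^{-2\alpha/Q}K^{2(Q+\alpha)/Q}$. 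This scaling matches the stated $C_0\propto\lambda^{-2\alpha/Q}(C_{\G,\alpha}\|f\|_2^2)^{(Q+\alpha)/Q}$.

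Combining, $\mathcal J^\alpha_\lambda(v)\ge\frac12B-\frac14B+\lambda A-\frac\lambda2A-C_0$, which is \eqref{coeq}. The only real obstacle is the bookkeeping to reproduce the exact prefactor $\frac{Q^2}{(Q+\alpha)^2}\bigl(\frac{2\alpha}{\lambda(Q+\alpha)}\bigr)^{2\alpha/Q}$. I would obtain it by optimizing directly: maximize $g(A,B)=KA^aB^b-\frac14B-\frac\lambda2A$ over $A,B>0$ via calculus (the critical point equations are homogeneous). The maximum value is an upper bound for the constant, and any constant at least that large suffices. If the paper's constant comes out slightly larger than the true maximum, the inequality still holds.
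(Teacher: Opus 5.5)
Your proposal is correct and is essentially the paper's proof. Both use Cauchy--Schwarz plus Lemma~\ref{nash} to get $\int_\G fv\le K\|v\|_1^{a}\E_\alpha(v)^{b}$, then a three-term weighted Young inequality with exponents $1/a$, $1/b$, $1/\gamma$ (where $\gamma=1-a-b=b$) and weights chosen to produce $\frac\lambda2\|v\|_1+\frac14\E_\alpha(v)$. The bookkeeping you defer is short, and it resolves your hedge in your favor. Write $KA^aB^b=(\delta A/a)^a(\varepsilon B/b)^b\cdot K(a/\delta)^a(b/\varepsilon)^b$ and apply weighted AM--GM; this gives the constant $\gamma\bigl[K(a/\delta)^a(b/\varepsilon)^b\bigr]^{1/\gamma}$, which for $\delta=\lambda/2$, $\varepsilon=1/4$ is exactly the stated $C_0$. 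Since AM--GM is sharp here, this $C_0$ equals the maximum of your $g(A,B)$ exactly, not something slightly larger.
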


\begin{proof}
Using the Cauchy--Schwarz inequality and Lemma \ref{nash}, we obtain
\begin{equation*}
\int_\G fv\leq C_{\G,\alpha}^{1/2}\|f\|_2
\|v\|_1^{\frac{\alpha}{Q+\alpha}}
\E_\alpha(v)^{\frac{Q}{2(Q+\alpha)}},\qquad \forall \, v\in \K_\alpha.
\end{equation*}	 
Set $a=\frac{\alpha}{Q+\alpha}$ and $b=\frac{Q}{2(Q+\alpha)}$. Then $\gamma:=1-a-b=\frac{Q}{2(Q+\alpha)}>0$. For $A,B,C\geq  0$, Young's inequality yields that
\begin{align*}
CA^aB^b&=\left(\tfrac{\delta A}{a}\right)^a\left(\tfrac{\varepsilon B}{b}\right)^bC\left(\tfrac{a}{\delta}\right)^a\left(\tfrac{b}{\varepsilon}\right)^b\\
&\leq a\tfrac{\delta A}{a}+b\tfrac{\varepsilon B}{b}+\gamma \left[C(\tfrac{a}{\delta})^a(\tfrac{b}{\varepsilon})^b\right]^{1/\gamma}\\
&= \delta A+\varepsilon B+C_{\varepsilon,\delta},
\end{align*}
where
$$
C_{\varepsilon,\delta}=\frac{Q^2}{4\varepsilon (Q+\alpha)^2}C^{\frac{2(Q+\alpha)}{Q}}\left(\frac{\alpha}{\delta(Q+\alpha)}\right)^{2\alpha/Q}.
$$
Applying this with $A=\|v\|_1$, $B=\E_\alpha(v)$, $C=C_{\G,\alpha}^{1/2}\|f\|_2$, $\varepsilon=1/4$, and $\delta=\lambda/2$, one concludes that
$$
 \int_\G fv\leq\frac14\E_\alpha(v)+\frac\lambda2\|v\|_1+C_0.
$$
Substituting this into \eqref{engy} gives \eqref{coeq}.
\end{proof}

We can now apply the direct method to study $\inf_{v\in \K_\alpha} \mathcal{J}_\lambda^\alpha(v)$.

\begin{proposition}\label{min}
There is a unique $u\in\K_\alpha$ such that
$$
 \mathcal J_\lambda^\alpha(u) =\inf_{v\in\K_\alpha}\mathcal J_\lambda^\alpha(v).
$$
It is characterized by the variational inequality
\begin{equation}\label{vi}
 \E_\alpha(u,v-u)
 \geq\int_\G(f-\lambda)(v-u),
 \qquad \text{for all } v\in\K_\alpha,
\end{equation}
and satisfies the following energy identity:
\begin{equation}\label{iden}
 \E_\alpha(u)=\int_\G(f-\lambda)u.
\end{equation}
\end{proposition}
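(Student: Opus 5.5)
We use the direct method. By Proposition~\ref{coer}, $\mathcal J_\lambda^\alpha$ is bounded below on $\K_\alpha$ by $-C_0$, so $m:=\inf_{\K_\alpha}\mathcal J_\lambda^\alpha$ is finite; it is also $\le 0$, since $0\in\K_\alpha$. Take a minimizing sequence $(v_n)\subset\K_\alpha$. By \eqref{coeq}, $\E_\alpha(v_n)$ and $\|v_n\|_1$ are uniformly bounded, and Lemma~\ref{nash} then bounds $\|v_n\|_2$. Hence $(v_n)$ is bounded in the Hilbert space $\mathsf V_\alpha$, and after passing to a subsequence $v_n\rightharpoonup u$ weakly in $\mathsf V_\alpha$, hence weakly in $L^2$. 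Nonnegativity passes to the limit, since $\{v\ge0\}$ is weakly closed in $L^2$. The energy is weakly lower semicontinuous, being a continuous convex functional, and $\int fv_n\to\int fu$ because $f\in L^2$. For the $L^1$ term, use that for nonnegative $v$ one has $\|v\|_1=\sup\{\int v\varphi:\varphi\in C_c(\G),\,0\le\varphi\le1\}$. Each $\int v_n\varphi\to\int u\varphi$, so $\|u\|_1\le\liminf\|v_n\|_1<\infty$ and in particular $u\in L^1$. Thus $u\in\K_\alpha$ and $\mathcal J_\lambda^\alpha(u)\le\liminf \mathcal J_\lambda^\alpha(v_n)=m$.

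For uniqueness, note that $\K_\alpha$ is convex and that on $\K_\alpha$ the map $v\mapsto -\int(f-\lambda)v$ is linear, because $\|v\|_1=\int v$ there. If $u_1,u_2$ are two minimizers, the parallelogram identity gives
\[
\mathcal J_\lambda^\alpha\Bigl(\tfrac{u_1+u_2}{2}\Bigr)=\tfrac12\mathcal J_\lambda^\alpha(u_1)+\tfrac12\mathcal J_\lambda^\alpha(u_2)-\tfrac18\E_\alpha(u_1-u_2).
\]
Minimality forces $\E_\alpha(u_1-u_2)=0$. Since $u_1-u_2\in L^1\cap\mathsf V_\alpha$, Lemma~\ref{nash} (or Proposition~\ref{l2iso}) gives $u_1=u_2$.

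Next we derive the variational inequality. For $v\in\K_\alpha$ and $t\in(0,1]$, the point $u+t(v-u)$ lies in $\K_\alpha$ by convexity. Expand
\[
\mathcal J_\lambda^\alpha(u+t(v-u))-\mathcal J_\lambda^\alpha(u)=t\Bigl[\E_\alpha(u,v-u)-\int(f-\lambda)(v-u)\Bigr]+\tfrac{t^2}{2}\E_\alpha(v-u)\ge0.
\]
All integrals are finite since $v-u\in L^1\cap L^2$. Dividing by $t$ and letting $t\to0^+$ gives \eqref{vi}. Conversely, if $u\in\K_\alpha$ satisfies \eqref{vi}, the same expansion with $t=1$ shows $\mathcal J_\lambda^\alpha(v)\ge\mathcal J_\lambda^\alpha(u)$ for every $v\in\K_\alpha$. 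So \eqref{vi} characterizes the minimizer, which by uniqueness is $u$.

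Finally, apply \eqref{vi} with $v=2u$ and with $v=0$, both of which lie in $\K_\alpha$. This gives $\E_\alpha(u)\ge\int(f-\lambda)u$ and $-\E_\alpha(u)\ge-\int(f-\lambda)u$, hence \eqref{iden}.

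The only delicate point is the lower semicontinuity of the $L^1$ term under weak $\mathsf V_\alpha$ convergence on the noncompact group. The supremum-over-test-functions argument above handles it, using only nonnegativity of the $v_n$.
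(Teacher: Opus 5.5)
Your proof is correct and follows the paper's argument essentially step by step: coercivity plus the Nash inequality give boundedness in $\mathsf V_\alpha$; weak compactness, lower semicontinuity of $\E_\alpha$, and lower semicontinuity of the $L^1$ norm (via nonnegative compactly supported test functions) give existence; the parallelogram identity gives uniqueness; the one-sided derivative along $(1-t)u+tv$ gives the variational inequality; and testing with $v=0$ and $v=2u$ gives the energy identity. The only differences are cosmetic, namely the order of existence and uniqueness, and obtaining $u\ge 0$ from weak closedness of the cone rather than by testing against $\max\{-u,0\}$.
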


\begin{proof}
		For any $v,w\in\K_\alpha$, a direct computation gives
	$$
	\mathcal J_\lambda^\alpha\left(\tfrac{v+w}{2}\right)=\tfrac12\mathcal J_\lambda^\alpha(v)+\tfrac12\mathcal J_\lambda^\alpha(w) -\tfrac18\E_\alpha(v-w)\leq \tfrac12\mathcal J_\lambda^\alpha(v)+\tfrac12\mathcal J_\lambda^\alpha(w).
	$$
	The equality holds if and only if $\E_\alpha(v-w)=0$, which implies $\Lop^{\alpha/4}(v - w) = 0$ and thus $v = w$. Hence $\mathcal{J}_\lambda^\alpha$ is strictly convex, which implies that the minimizer is unique if it exists.
    
    We now employ the direct method to prove the existence. Let $\{v_k\}_{k=1}^\infty$ be a minimizing sequence. Since $\inf_{v\in \K_\alpha}\mathcal{J}_{\lambda}^\alpha(v) \leq \mathcal{J}_{\lambda}^\alpha(0)=0$, after discarding finitely many terms and using Proposition \ref{coer}, we may assume
    $$
    1>\mathcal{J}_\lambda^\alpha (v_k) \geq\frac14\E_\alpha(v_k)+\frac\lambda2\|v_k\|_1-C_0.
    $$
    Thus both $\E_\alpha(v_k)$ and $\|v_k\|_1$ are uniformly bounded. Moreover, Lemma \ref{nash} ensures that the $L^2$ norms of $\{v_k\}_{k = 1}^\infty$ are also uniformly bounded. 

   Therefore, $\{v_k\}_{k = 1}^\infty$ is bounded in $(\mathsf{V}_\alpha,(\|v\|_2^2+\|\Lop^{\alpha/4}v\|_2^2)^{1/2})$.
   After passing to a subsequence, we may assume that
   $$
    v_k\rightharpoonup u \qquad\text{weakly in }\mathsf V_\alpha.
   $$
   Consequently, we have
	$$
	v_k\rightharpoonup u\qquad\text{and}\qquad \Lop^{\alpha/4}v_k\rightharpoonup \Lop^{\alpha/4}u, \qquad\text{ in $L^2(\G)$}.
	$$
    It follows immediately that
    \begin{equation} \label{ff1}
\E_\alpha(u)=\|\Lop^{\alpha/4}u\|_2^2\leq\liminf_{k \to \infty}\|\Lop^{\alpha/4}v_k\|_2^2=\liminf_{k \to \infty}\E_\alpha(v_k).
    \end{equation}
    For any $0\leq \phi\in L^2(\G)$, the weak convergence gives $\int_{\G} u\phi=\lim_{k \to \infty}\int_{\G} v_k\phi \geq 0$, which implies $u\geq0$ by taking $\phi=\max\{-u,0\}$. It follows then
    $$
    \|u\|_1=\sup_{\substack{\phi\in C_c^\infty(\G)\\0\leq\phi\leq1}}\int_\G u\phi
    =\sup_{\substack{\phi\in C_c^\infty(\G)\\0\leq\phi\leq1}}\lim_{k\to\infty}\int_\G v_k\phi
    \leq\liminf_{k\to\infty}\|v_k\|_1,
    $$
    which shows that $u\in \K_\alpha$.

The second inequality in \eqref{ff1} and the  $L^1$-norm estimate above,     together with the continuity of
    $v\mapsto\int_\G fv$ imply that
    \begin{align*}
     \inf_{v\in \K_\alpha}\mathcal J_{\lambda}^\alpha(v)\leq \mathcal J_\lambda^\alpha(u)&=\frac{1}{2}\E_\alpha(u)-\int_{\G}(f-\lambda)u\\
      & \leq  \liminf_{k\to\infty} \frac{1}{2}\E_\alpha(v_k) + \lambda  \liminf_{k\to\infty} \int_{\G}v_k - \lim_{k\to\infty} \int_{\G} f v_k\\
      &\leq \liminf_{k\to\infty} \left[\frac{1}{2}\E_\alpha(v_k)-\int_{\G}(f-\lambda)v_k\right]\\
     &=\liminf_{k\to\infty}\mathcal J_\lambda^\alpha(v_k)=\inf_{v\in \K_\alpha}\mathcal J_{\lambda}^\alpha(v).
    \end{align*}
    Thus $u$ is the unique minimizer.
	
    We now prove \eqref{vi}. Fix $v\in\K_\alpha$ and define
	$$
	u_t:=(1-t)u+tv\in\K_\alpha,\qquad 0\leq t\leq1.
	$$
	Expanding the quadratic form, we obtain
	\begin{align*}
	\psi(t):=\mathcal J_\lambda^\alpha(u_t)
		={}&\mathcal J_\lambda^\alpha(u)
		+t\left(
		\E_\alpha(u,v-u)-\int_\G(f-\lambda)(v-u)
		\right)
		+\frac{t^2}{2}\E_\alpha(v-u).
	\end{align*}
    Since $u$ minimizes $\mathcal J_\lambda^\alpha$, we have $\psi(t)\geq \psi(0)$ for $t\in[0,1]$.  Therefore,
    $$
    0\leq\psi'(0)=\E_{\alpha}(u,v-u)-\int_{\G} (f-\lambda)(v-u),
    $$
	which justifies \eqref{vi}. Conversely, suppose that $u\in\K_\alpha$ satisfies \eqref{vi}. Then for every $v\in\K_\alpha$,
	\begin{align*}
	\mathcal J_\lambda^\alpha(v)-\mathcal J_\lambda^\alpha(u)=\E_\alpha(u,v-u)
	-\int_\G(f-\lambda)(v-u)+\frac12\E_\alpha(v-u)\geq0.
	\end{align*}
	Thus \eqref{vi} is also a sufficient condition. 
    
    Finally, \eqref{iden} follows by taking $v=0$ and $v=2u$ in \eqref{vi} successively.
\end{proof}

\subsection{The Lewy--Stampacchia estimate}

Let $u$ be the minimizer given by Proposition \ref{min}. Define a distribution $\eta$ on $C_c^\infty(\G)$ by
\begin{equation}\label{eta}
 \langle\eta,\phi\rangle
 =\E_\alpha(u,\phi)-\int_\G(f-\lambda)\phi,
 \qquad\text{for all }\phi\in C_c^\infty(\G).
\end{equation}
Let $K$ be a fixed compact subset of $\G$. Then for every $\phi\in C_c^\infty(\G)$ with
$\operatorname{supp}\phi\subset K$, the Cauchy--Schwarz inequality and
\eqref{intr} imply
\begin{align*}
	|\E_\alpha(u,\phi)|&\leq \E_\alpha(u)^{1/2}\|\Lop^{\alpha/4}\phi\|_2\leq
	\E_\alpha(u)^{1/2} \|\phi\|_2^{1-\alpha/4}\|\Lop\phi\|_2^{\alpha/4}\\
	&\leq \E_\alpha(u)^{1/2} (|K|^{1/2}\|\phi\|_\infty)^{1-\alpha/4}(|K|^{1/2}\|\Lop \phi\|_\infty)^{\alpha/4}\\
	&\leq|K|^{1/2}\E_\alpha(u)^{1/2} [(1-\tfrac{\alpha}{4})\|\phi\|_\infty+\tfrac{\alpha}{4}\|\Lop \phi\|_\infty],
	\end{align*}
where we used Young's inequality in the last inequality. We also have
$$
\left|\int_\G(f-\lambda)\phi\right|\leq
\bigl(\|f\|_{L^1(K)}+\lambda|K|\bigr)\|\phi\|_\infty.
$$
These two estimates show that $\eta$ is a distribution. Moreover, for any $0\leq\phi\in C_c^\infty(\G)$,  taking $v=u+\phi$ in the variational inequality \eqref{vi} yields
$$
\langle \eta,\phi\rangle=\E_\alpha(u,\phi)-\int_\G(f-\lambda)\phi\geq 0.
$$
Therefore $\eta$ is a positive distribution. Recall that every positive distribution is represented by a unique positive Radon measure, see for instance \cite[Theorem 6.22]{LL01}. We will use the same symbol $\eta$ for the resulting positive Radon measure subsequently, so that
\begin{align*}
	\langle\eta,\phi\rangle=\int_\G\phi\,d\eta,\qquad \text{for every }\phi\in C_c^\infty(\G).
\end{align*}

Moreover, we have the following stronger proposition.
\begin{proposition}\label{lewy}
Define $\eta$ by \eqref{eta}. Then for every nonnegative $\phi\in C_c^\infty(\G)$,
\begin{equation}\label{lseq}
 0\leq\langle\eta,\phi\rangle\leq\int_\G(\lambda-f)^+\phi\leq\lambda\|\phi\|_1.
\end{equation}
Consequently, $\eta$ is absolutely continuous with respect to Haar measure. We henceforth use the same symbol $\eta$ for its Radon--Nikodym density.  With this convention,
$$
0\leq\eta\leq(\lambda-f)^+\leq\lambda,\qquad\text{almost everywhere}.
$$
\end{proposition}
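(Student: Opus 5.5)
The lower bound $\langle\eta,\phi\rangle\ge0$ was shown just before the statement, and $\int(\lambda-f)^+\phi\le\lambda\|\phi\|_1$ is trivial. So the content is the upper bound $\langle\eta,\phi\rangle\le\int_\G(\lambda-f)^+\phi$. I would prove it by the classical Lewy--Stampacchia penalization/comparison trick, using only the variational inequality \eqref{vi} and the Markov property (Lemma~\ref{mark}).

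\emph{Step 1: a one-sided test.} Fix $0\le\phi\in C_c^\infty(\G)$ and $\varepsilon>0$. Test \eqref{vi} with $v=(u-\varepsilon\phi)^+$. Since $u-\varepsilon\phi\in L^1\cap\mathsf V_\alpha$, this $v$ lies in $\K_\alpha$ by Lemma~\ref{mark}(i). Writing $v-u=-\varepsilon\phi+w$ with $w:=(u-\varepsilon\phi)^-\ge0$, we get
\[
-\varepsilon\E_\alpha(u,\phi)+\E_\alpha(u,w)\ge-\varepsilon\int_\G(f-\lambda)\phi+\int_\G(f-\lambda)w .
\]
Equivalently,
\[
\varepsilon\langle\eta,\phi\rangle\le \E_\alpha(u,w)-\int_\G(f-\lambda)w .
\]

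\emph{Step 2: estimating the $w$-term.} Write $u=(u-\varepsilon\phi)+\varepsilon\phi$. By Lemma~\ref{mark}(iii), $\E_\alpha(u-\varepsilon\phi,w)\le-\E_\alpha(w)\le0$, so $\E_\alpha(u,w)\le\varepsilon\E_\alpha(\phi,w)$. Next, $w$ is supported in $\{u<\varepsilon\phi\}\subset\operatorname{supp}\phi$, and there $0\le w\le\varepsilon\phi$. Hence
\[
-\int(f-\lambda)w\le\int(\lambda-f)^+w\le\varepsilon\int(\lambda-f)^+\phi\,\1_{\{u<\varepsilon\phi\}} .
\]
Dividing by $\varepsilon$ gives
\[
\langle\eta,\phi\rangle\le\E_\alpha(\phi,w)+\int_\G(\lambda-f)^+\phi\,\1_{\{u<\varepsilon\phi\}} .
\]

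\emph{Step 3: passing $\varepsilon\to0$.} Since $\|w\|_2\le\varepsilon\|\phi\|_2\to0$ and $\Lop^{\alpha/2}\phi\in L^2$ (because $\phi\in D(\Lop)$), we have $\E_\alpha(\phi,w)=\langle\Lop^{\alpha/2}\phi,w\rangle_2\to0$. By dominated convergence the last integral tends to $\int(\lambda-f)^+\phi\,\1_{\{u=0\}}\le\int(\lambda-f)^+\phi$. This yields \eqref{lseq}.

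\emph{Conclusion.} \eqref{lseq} shows that the positive Radon measure $\eta$ satisfies $\eta(K)\le\lambda|K|$ for compact $K$, after approximating indicators from above by nonnegative test functions and using outer regularity. Hence $\eta$ is absolutely continuous with respect to Haar measure. Comparing $\int\phi\,d\eta\le\int(\lambda-f)^+\phi$ for all $\phi\ge0$, via density of $C_c^\infty$ in $C_c$ and Lebesgue differentiation or a standard duality argument, gives $0\le\eta\le(\lambda-f)^+$ a.e.

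The main obstacle is Step 2: choosing the test function so that the nonlocal energy term $\E_\alpha(u,w)$ is controlled. This is exactly where the sign inequality of Lemma~\ref{mark}(iii) replaces the pointwise computations of the local case. One must also make sure the splitting $u=(u-\varepsilon\phi)+\varepsilon\phi$ is legitimate in $\mathsf V_\alpha$, which holds since $\phi\in C_c^\infty\subset\mathsf V_\alpha$.
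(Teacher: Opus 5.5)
Your proposal is correct and follows the paper's proof essentially step for step: you use the same test function $v=(u-\varepsilon\phi)^+$ in \eqref{vi}, the same splitting $u=(u-\varepsilon\phi)+\varepsilon\phi$, and the same sign inequality from Lemma~\ref{mark}(iii). The differences are minor. You dispose of the cross term via $\E_\alpha(\phi,w)=\langle\Lop^{\alpha/2}\phi,w\rangle_2$ and $\|w\|_2\le\varepsilon\|\phi\|_2$, whereas the paper keeps $-\E_\alpha(z^-)$ and completes the square to get $\tfrac{\varepsilon^2}{4}\E_\alpha(\phi)$, which needs only $\phi\in\mathsf V_\alpha$. Your extra indicator $\1_{\{u<\varepsilon\phi\}}$ also yields $\eta\le(\lambda-f)^+\1_{\{u=0\}}$ directly, i.e.\ the complementarity of Lemma~\ref{comp}, which the paper proves separately.
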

\begin{proof}
    Fix a nonnegative function $\phi\in C_c^\infty(\G)$ and a number $\varepsilon>0$. Set
	$$
	z:=u-\varepsilon\phi,
	\qquad
	v:=z^+=\max\{z,0\}.
	$$
    Since the maps $s\mapsto s^\pm$ are $1$-Lipschitz and vanish at the origin, Lemma \ref{mark}(i) implies $z^\pm\in\mathsf V_\alpha$. Moreover, since $u\geq0$ and $\phi\geq0$,  we have
    $$
    0\leq z^+\leq u.
    $$
    Thus $v=z^+\in\K_\alpha$. 
    
    Applying \eqref{vi} with $v-u=z^- -\varepsilon\phi$ (since $u - \varepsilon \phi = z = z^+ - z^-$) and expanding both sides, we obtain
    $$
    \E_\alpha(u,z^-)-\varepsilon\E_\alpha(u,\phi)
    \geq\int_\G(f-\lambda)z^-
    -\varepsilon\int_\G(f-\lambda)\phi.
    $$
    That is, 
	\begin{equation}\label{epsi}
		\varepsilon\langle\eta,\phi\rangle\leq\E_\alpha(u,z^-)+\int_\G(\lambda-f)z^-.
	\end{equation}
	
	It remains to bound the two terms on the right-hand side. 
    Using $u=z+\varepsilon\phi$, Lemma \ref{mark}(iii) and the Cauchy--Schwarz inequality, we can estimate
	\begin{align*}
	\E_\alpha(u,z^-)&=\E_\alpha(z,z^-)+\varepsilon\E_\alpha(\phi,z^-)\leq -\E_\alpha(z^-) +\varepsilon \E_\alpha(\phi)^{1/2}\E_\alpha(z^-)^{1/2}\\
	&=-(\E_\alpha (z^-)^{1/2}-\tfrac{\varepsilon}{2}\E_\alpha(\phi)^{1/2})^2+\tfrac{\varepsilon^2}{4}\E_\alpha(\phi)\leq \tfrac{\varepsilon^2}{4}\E_\alpha(\phi).
	\end{align*}
    By definition, $0\leq z^-=(\varepsilon \phi -u)^+\leq \varepsilon \phi$ . Thus
	\begin{equation*}
		\int_\G(\lambda-f)z^-\leq\int_\G(\lambda-f)^+z^-\leq\varepsilon\int_\G(\lambda-f)^+\phi.
	\end{equation*}
	Inserting these two estimates into \eqref{epsi} and dividing both sides by $\varepsilon>0$, we see that
	$$
	\langle\eta,\phi\rangle\leq\frac{\varepsilon}{4}\E_\alpha(\phi)	+\int_\G(\lambda-f)^+\phi.
	$$
	Since $f\geq 0$, letting $\varepsilon \to 0^+$ yields
	$$
		\langle\eta,\phi\rangle\leq\int_\G(\lambda-f)^+\phi\leq \int_\G \lambda \phi=\lambda\|\phi\|_1.
	$$
	This proves \eqref{lseq} and implies that $\eta$ is absolutely continuous with respect to $dx$. Identifying $\eta$ with its Radon--Nikodym density, the inequality above gives
   $$
   0\leq\eta\leq(\lambda-f)^+\leq\lambda\qquad\text{almost everywhere}.
   $$
   The proof of Proposition \ref{lewy} is therefore finished.
\end{proof}

\subsection{Mass, regularity, and complementarity}
By Proposition \ref{lewy}, $\eta$ now denotes a bounded nonnegative
function.  We set
\begin{equation}\label{mudf}
	\mu:=\lambda-\eta.
\end{equation}
Using Proposition \ref{lewy} and \eqref{eta}, we know that
\begin{equation}\label{weak}
	\int_\G\mu\phi=\int_\G f\phi-\E_\alpha(u,\phi),\qquad\phi\in C_c^\infty(\G)
\end{equation}
and
\begin{equation}\label{claimi}
	0\leq\mu\leq\lambda,
	\qquad
	\mu\geq\lambda-(\lambda-f)^+=\min\{\lambda,f\}\quad \text{almost everywhere}.
\end{equation}
\begin{lemma}\label{mass}
	The function $\mu$ defined by \eqref{mudf} belongs to $L^1(\G)$ and satisfies
	\begin{equation*}
		\int_\G\mu=\int_\G f.
	\end{equation*}
\end{lemma}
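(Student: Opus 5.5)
The plan is to test the weak identity \eqref{weak} against the cutoffs $\chi_R$ of Lemma~\ref{cut} and then let $R\to\infty$.

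\emph{Integrability of $\mu$.} Since $0\le\mu\le\lambda$ by \eqref{claimi}, $\mu$ is locally integrable. For $\phi=\chi_R\in C_c^\infty(\G)$, \eqref{weak} gives
\[
 \int_\G\mu\chi_R=\int_\G f\chi_R-\E_\alpha(u,\chi_R).
\]

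\emph{The energy term.} The key step is to move all of $\Lop^{\alpha/2}$ onto the cutoff. Because $\chi_R\in D(\Lop)\subset D(\Lop^{\alpha/2})$ and $u\in\mathsf V_\alpha=D(\Lop^{\alpha/4})$, the spectral theorem yields
\[
 \E_\alpha(u,\chi_R)=\langle\Lop^{\alpha/4}u,\Lop^{\alpha/4}\chi_R\rangle_2=\int_\G u\,\Lop^{\alpha/2}\chi_R .
\]
We have $u\in L^1(\G)$, since $u\in\K_\alpha$, and $\Lop^{\alpha/2}\chi_R\in L^2\cap L^\infty$. Lemma~\ref{cut} then gives
\[
 |\E_\alpha(u,\chi_R)|\le\|u\|_1\,\|\Lop^{\alpha/2}\chi_R\|_\infty\le C_{\chi,\alpha}R^{-\alpha}\|u\|_1\to0 .
\]
This is exactly where the $L^1$ membership of the minimizer, built into $\K_\alpha$ and controlled by the coercivity in Proposition~\ref{coer}, is needed. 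I expect it to be the main point. Beyond it, the only care required is justifying the identity $\langle\Lop^{\alpha/4}u,\Lop^{\alpha/4}\chi_R\rangle=\langle u,\Lop^{\alpha/2}\chi_R\rangle$ and reading it as an $L^1$--$L^\infty$ pairing. This is legitimate because both functions lie in $L^2$ and the product is absolutely integrable.

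\emph{Passage to the limit.} To choose the cutoffs I take $\chi$ so that $\chi_R$ increases pointwise to $1$. Concretely, let $\chi=\psi\circ|\cdot|$ for a homogeneous norm and a nonincreasing $\psi$; then $\chi(\delta_{R^{-1}}x)$ is nondecreasing in $R$. If one prefers to avoid this choice, dominated convergence works on the $f$ side anyway, since $f\in L^1$ and $0\le\chi_R\le1$, so $\int f\chi_R\to\int f$. On the $\mu$ side, Fatou's lemma, or monotone convergence with the monotone choice, gives
\[
 \int_\G\mu\le\liminf_{R\to\infty}\int_\G\mu\chi_R=\int_\G f<\infty .
\]
Hence $\mu\in L^1$. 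Dominated convergence for $\mu\chi_R$ then upgrades this to the equality $\int_\G\mu=\lim_{R\to\infty}\int_\G\mu\chi_R=\int_\G f$.
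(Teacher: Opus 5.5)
Your proof is correct and follows the paper's argument essentially step for step. You test \eqref{weak} with $\chi_R$, rewrite $\E_\alpha(u,\chi_R)=\langle u,\Lop^{\alpha/2}\chi_R\rangle_2$ and bound it by $C_{\chi,\alpha}R^{-\alpha}\|u\|_1$ via Lemma~\ref{cut}, use dominated convergence for $\int f\chi_R$, apply Fatou to get $\mu\in L^1$, and finish with dominated convergence for the equality; the monotone choice of cutoff is, as you note, unnecessary.
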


\begin{proof}
	Let $\chi_R$ be the cutoff functions given in Lemma \ref{cut}. Recall that
	$$
	0\leq\chi_R\leq1,\quad\chi_R\to1 \quad\text{pointwise on }\G,\quad\text{and}\quad
	\|\Lop^{\alpha/2}\chi_R\|_\infty \leq C_{\chi,\alpha}R^{-\alpha}.
	$$
	We take $\phi=\chi_R$ in \eqref{weak}. Since $\chi_R\in D(\Lop^{\alpha/2})$ and $u\in D(\Lop^{\alpha/4})$,  we have 
	\begin{align*}
		\E_\alpha(u,\chi_R)=\langle\Lop^{\alpha/4}u,\Lop^{\alpha/4}\chi_R\rangle_2=\langle u,\Lop^{\alpha/4}\Lop^{\alpha/4}\chi_R\rangle_2=\langle u,\Lop^{\alpha/2}\chi_R\rangle_2.
	\end{align*}
	Therefore,
	\begin{equation*}
		|\E_\alpha(u,\chi_R)|
		\leq\|u\|_1\|\Lop^{\alpha/2}\chi_R\|_\infty
		\leq C_{\chi,\alpha}R^{-\alpha}\|u\|_1.
	\end{equation*}
	By the dominated convergence theorem, $\int f\chi_R\to \int f$ and hence
	$$
	\lim_{R\to\infty}\int_\G\mu\chi_R=
	\lim_{R\to\infty}\int_\G f\chi_R-\lim_{R\to\infty}\E_\alpha(u,\chi_R)
	=\int_\G f.
	$$
	
	Since $\mu\geq0$, Fatou's lemma now implies
	$$
	\int_\G\mu
	\leq\liminf_{R\to\infty}\int_\G\mu\chi_R
	=\int_\G f<\infty.
	$$
	Thus $\mu\in L^1(\G)$. Applying the dominated convergence theorem again, we have
	$$
	\int_\G\mu
	=\lim_{R\to\infty}\int_\G\mu\chi_R
	=\int_\G f,
	$$
    as required.
\end{proof}

We next upgrade the weak relation \eqref{weak} to an operator identity.
\begin{proposition}\label{reg}
The minimizer $u$ given by Proposition \ref{min} satisfies
$$
u\in D(\Lop^{\alpha/2}),\qquad
\Lop^{\alpha/2}u=f-\mu\quad\text{in }L^2(\G).
$$
Equivalently, $u$ satisfies
\begin{equation}\label{form}
 \E_\alpha(u,\psi)=\int_\G(f-\mu)\overline{\psi},
 \qquad \text{for every }\psi\in\mathsf V_\alpha.
\end{equation}
Finally, if $\alpha\geq1$, then $u\in D(\Lop^{1/2})=W_H^{1,2}(\G;\mathbb{C})$.
\end{proposition}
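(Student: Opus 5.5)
Since $0\le\mu\le\lambda$ and $\mu\in L^1(\G)$ by Lemma~\ref{mass}, we have $\mu\in L^2(\G)$. Because $f\in L^2(\G)$ as well, the function $g:=f-\mu$ lies in $L^2(\G)$. The identity \eqref{weak} reads $\E_\alpha(u,\phi)=\int_\G g\phi$ for all real $\phi\in C_c^\infty(\G)$.

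The first step is to extend this identity from test functions to all of $\mathsf V_\alpha$. Both sides are continuous in $\phi$ for the norm $(\|\phi\|_2^2+\E_\alpha(\phi))^{1/2}$. The left side satisfies $|\E_\alpha(u,\phi)|\le\E_\alpha(u)^{1/2}\E_\alpha(\phi)^{1/2}$, and the right side satisfies $|\int g\phi|\le\|g\|_2\|\phi\|_2$. By the core property \eqref{core}, and after splitting complex test functions into real and imaginary parts (using the sesquilinear convention), we obtain \eqref{form} for every $\psi\in\mathsf V_\alpha$.

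The second step turns \eqref{form} into an operator identity by a self-adjointness argument. Fix $\psi\in\mathsf V_\alpha$; then \eqref{form} says $\langle\Lop^{\alpha/4}u,\Lop^{\alpha/4}\psi\rangle_2=\langle g,\psi\rangle_2$. Hence $\Lop^{\alpha/4}u$ belongs to $D((\Lop^{\alpha/4})^*)=D(\Lop^{\alpha/4})$, with $\Lop^{\alpha/4}\Lop^{\alpha/4}u=g$. By the spectral calculus, $D(\Lop^{\alpha/2})=\{v\in D(\Lop^{\alpha/4}):\Lop^{\alpha/4}v\in D(\Lop^{\alpha/4})\}$. Therefore $u\in D(\Lop^{\alpha/2})$ and $\Lop^{\alpha/2}u=f-\mu$. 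Conversely, if $\Lop^{\alpha/2}u=f-\mu$, then \eqref{form} follows by moving $\Lop^{\alpha/4}$ across the inner product, which gives the claimed equivalence.

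The third step, for $\alpha\ge1$, uses the spectral theorem directly. For $u\in L^2\cap D(\Lop^{\alpha/2})$ we have $\int\lambda\,d\langle E_\lambda u,u\rangle\le\int(1+\lambda^{\alpha})\,d\langle E_\lambda u,u\rangle<\infty$, because $\lambda\le1+\lambda^\alpha$ when $\alpha\ge1$. This is the same interpolation idea as \eqref{intr}. So $u\in D(\Lop^{1/2})$, which is $W_H^{1,2}$ as identified after \eqref{grad}.

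The only delicate point is the density extension in the first step. It requires that $g$ be genuinely in $L^2$, which is why Lemma~\ref{mass} together with the bound $\mu\le\lambda$ is needed, and that the core property hold for the form norm of $\E_\alpha$. Both are available from \eqref{core} and Lemma~\ref{mass}.
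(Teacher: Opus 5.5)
Your proof is correct and follows the paper's route. You extend \eqref{weak} to all of $\mathsf V_\alpha$ by the core property \eqref{core}, using $f-\mu\in L^2$. You then pass to the operator identity and finish with the spectral bound $\lambda\le 1+\lambda^\alpha$ for $\alpha\ge1$.

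The only difference is in the middle step. Where the paper simply cites the representation theorem for Dirichlet forms, you prove it directly from the self-adjointness of $\Lop^{\alpha/4}$ and the identity $D(\Lop^{\alpha/2})=\{v\in D(\Lop^{\alpha/4}):\Lop^{\alpha/4}v\in D(\Lop^{\alpha/4})\}$. This is a valid, self-contained replacement.
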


\begin{proof}
	By Lemma \ref{mass} and the bound $0\leq \mu \leq \lambda$, we have
	$$
	\mu\in L^1(\G)\cap L^\infty(\G)\subset L^2(\G).
	$$
	By the core property \eqref{core}, we can approximate any real
	$\psi\in\mathsf V_\alpha$ by functions $\{\phi_k\}_{k = 1}^\infty\subset C_c^\infty(\G)$. Therefore, by \eqref{eta} and \eqref{mudf} it holds that,
	\begin{align*}
	|\E_\alpha(u,\psi)-\langle f-\mu,{\psi}\rangle_2|&\leq  |\E_\alpha(u,\psi)-\E_\alpha(u,{\phi_k})|+|\langle f-\mu,{\phi_k}\rangle_2-\langle f-\mu,{\psi}\rangle_2|\\
	&\quad+|\E_\alpha(u,\phi_k)-\langle f-\mu,{\phi_k}\rangle_2|\\
	&\leq \E_\alpha(u)^{1/2}\E_\alpha(\phi_k-\psi)^{1/2}+\|f-\mu\|_2\|\phi_k-\psi\|_2.
	\end{align*}
Now	letting $k\to \infty$ gives
	$$
	\E_\alpha(u,\psi)=\int_{\G}(f-\mu)\psi,\qquad \text{for every real }\psi\in \mathsf{V}_\alpha,
	$$
 which leads to \eqref{form} by the conjugate linearity of $\E_\alpha$. By the representation theorem for Dirichlet forms, identity \eqref{form} implies 
	$$
	u\in D(\Lop^{\alpha/2}),\qquad\Lop^{\alpha/2}u=f-\mu
	\quad\text{in }L^2(\G).
	$$
	
	Suppose that $\alpha\geq1$. Using the spectral theorem and the elementary inequality $\lambda\leq1+\lambda^\alpha$ for $\lambda\geq0$, we obtain
	$$
	\|\Lop^{1/2}u\|_2^2
	=\int_{[0,\infty)}\lambda d\langle E_\lambda u,u\rangle_2\leq \int_{[0,\infty)}(1+\lambda^\alpha)d\langle E_\lambda u,u\rangle_2=\|u\|_2^2+\|\Lop^{\alpha/2}u\|_2^2<\infty.
	$$
	Thus $u\in D(\Lop^{1/2})=W_H^{1,2}(\G;\mathbb{C})$.
\end{proof}

The following complementarity relation identifies the role of the obstacle.
The reaction density $\eta$ can be nonzero only on the contact set
$\{u=0\}$. 

\begin{lemma}\label{comp}
	One has $\eta u=0$ almost everywhere. Consequently,
	$$
    \eta=0,\qquad\mu=\lambda,
	\qquad\text{almost everywhere on }\Omega:=\{x\in\G:u(x)>0\}.
    $$
\end{lemma}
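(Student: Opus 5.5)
We test the weak identity \eqref{form} against $\psi=u$ itself and compare with the energy identity \eqref{iden}. Since $u\in\K_\alpha\subset\mathsf V_\alpha$ is real-valued, Proposition~\ref{reg} gives
\[
 \E_\alpha(u)=\E_\alpha(u,u)=\int_\G(f-\mu)u .
\]
On the other hand, Proposition~\ref{min} gives $\E_\alpha(u)=\int_\G(f-\lambda)u$. Subtracting the two identities yields
\[
 \int_\G(\lambda-\mu)u=\int_\G\eta\,u=0 .
\]
Here we use $\mu=\lambda-\eta$ from \eqref{mudf}.

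The only points needing care are integrability and sign. The integrals make sense because $u\in L^1(\G)\cap L^2(\G)$, $f\in L^2(\G)$, $\mu\in L^1(\G)\cap L^\infty(\G)\subset L^2(\G)$ by Lemma~\ref{mass}, and $\eta$ is bounded with $0\le\eta\le\lambda$ by Proposition~\ref{lewy}. So $\eta u\in L^1(\G)$ and the subtraction is legitimate. Since $\eta\geq0$ and $u\geq0$ almost everywhere, the integrand $\eta u$ is nonnegative with zero integral. Hence $\eta u=0$ almost everywhere.

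The consequence is then immediate. On $\Omega=\{u>0\}$ we may divide by $u$, so $\eta=0$ almost everywhere on $\Omega$, and therefore $\mu=\lambda-\eta=\lambda$ almost everywhere there.

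I do not expect a real obstacle. The one subtle step is making sure \eqref{form} applies with $\psi=u$ rather than only with compactly supported test functions. Proposition~\ref{reg} already handles this via the core property \eqref{core}, so testing against $u\in\mathsf V_\alpha$ is allowed. Alternatively, one can pair $\Lop^{\alpha/2}u=f-\mu$ in $L^2$ with $u$ and use $\langle\Lop^{\alpha/2}u,u\rangle_2=\|\Lop^{\alpha/4}u\|_2^2$.
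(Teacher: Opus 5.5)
Your proposal is correct and matches the paper's proof: test \eqref{form} with $\psi=u$, compare with the energy identity \eqref{iden} to obtain $\int_\G \eta u=0$, and conclude from $\eta u\ge 0$. Your integrability checks, for instance $\eta u\in L^1(\G)$ because $\eta$ is bounded and $u\in L^1(\G)$, are a welcome addition that the paper leaves implicit.
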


\begin{proof}
	Taking $\psi=u$ in \eqref{form} gives
	\begin{equation*}
		\E_\alpha(u)=\int_\G(f-\mu)u.
	\end{equation*}
	On the other hand, the energy identity \eqref{iden} says that
	\begin{equation*}
		\E_\alpha(u)=\int_\G(f-\lambda)u.
	\end{equation*}
	Combining these two equalities and using $\eta=\lambda-\mu$ (recall \eqref{mudf}), we obtain
	$$
	0=\int_\G(\lambda-\mu)u=\int_\G\eta u.
	$$
	Since $\eta u\geq0$ and its integral is zero, it follows that
	$\eta u=0$ almost everywhere. Since $u>0$ a.e. on $\Omega$,  we have $\eta=0$ and hence $\mu=\lambda$ almost everywhere on $\Omega$.
\end{proof}

We are now in a position to complete the proof of the decomposition theorem.

\begin{proof}[Proof of Theorem \ref{dcmp}]
	Let $u$ be the unique minimizer in Proposition \ref{min} and  $\mu=\lambda-\eta$. From the construction they are nonnegative. By Proposition \ref{reg}, it holds that
	$$
	f=\mu+\Lop^{\alpha/2}u\qquad\text{in }L^2(\G).
	$$
	Then assertion (i) is given by \eqref{claimi} and Lemma \ref{mass}.
	
	By Lemma \ref{comp}, $\mu=\lambda$ almost everywhere on
	$\Omega=\{u>0\}$, which proves assertion (ii).  Assertion (iii) follows from (i) and (ii) immediately since 
	$$
	\lambda|\Omega|=\int_\Omega\mu\leq\int_\G\mu
	=\int_\G f=\|f\|_{1}.
	$$
	
	It remains to prove the final statement. When $\alpha\geq1$, $u\in D(\Lop^{1/2})=W_{H}^{1,2}(\G;\mathbb{C})$ by Proposition \ref{reg}. 
	Moreover, by using Cauchy--Schwarz inequality,
	$u\in W_{H,\mathrm{loc}}^{1,1}(\G)$ (noticing that $u$ is real-valued).  Since $u=u^+ = \max\{u,0\}$, applying Lemma \ref{stam} with $c=0$ gives
	$$
	\gradH u=\gradH u^+=\1_{\{u>0\}}\gradH u.
	$$
	Thus $\gradH u=0$ almost everywhere on $\G\setminus\Omega$.
\end{proof}

\section{Proof of Theorem~\ref{thmcce}}\label{sce}

In this section we give the proof of Theorem~\ref{thmcce}. The idea of the proof is the following: we first modify the construction given in \cite[Counterexample 1 in \S~4]{D26} for Riesz transforms associated with Schr\"odinger operators; then an approximation of the function $V^\frac{1}{2}$ by polynomials implies the convergence of Dirichlet forms (in the sense of Mosco), which gives a sequence of operators with unbounded $L^p$ bounds of Riesz transforms associated with Schr\"odinger operators; finally we transfer the result to stratified Lie groups by using the results of \cite{RS16}. To realize the final step we first consider the complex-valued spaces instead of the real-valued ones.

\subsection{Riesz transforms associated with Schr\"odinger operators}

For the construction we will use the known results on Riesz transforms associated with Schr\"odinger operators. More precisely, we will modify the construction in \cite[\S~7]{S95} or \cite[Counterexample 1 in \S~4]{D26}: on $\mathbb{R}^3$, for any given $p > 2$, we pick an $\varepsilon \in (0,1 - 2/p)$ and define
\begin{align}\label{defvV}
    V(x,y,z) := (x^2 + y^2)^\frac{\varepsilon - 2}{2} \in L^1_{\mathrm{loc}}(\mathbb{R}^3), \qquad 
    v(x,y,z) := \sum_{k = 0}^\infty \frac{(x^2 + y^2)^\frac{\varepsilon k }{2}}{\varepsilon^{2k} (k!)^2}.
\end{align}
Throughout this subsection we use $\Delta := - ( \partial_x^2 + \partial_y^2 + \partial_z^2)$ and $\nabla$ to denote the Laplacian and gradient on the Euclidean $\mathbb{R}^3$ space respectively. It is direct to check that $(\Delta + V ) v = 0 $ in the distributional sense. 
Set $u(x,y,z) := \phi(\sqrt{x^2 + y^2}) \phi(z) v(x,y,z)$ with $\phi \in C_c^\infty(\mathbb{R})$ an even function which is $1$ near $0$ and  $g := (\Delta + V + 1) u$. A direct calculation gives
\[
g = \Delta (\phi(\sqrt{x^2 + y^2}) \phi(z)) v - 2\nabla (\phi(\sqrt{x^2 + y^2}) \phi(z)) \cdot \nabla v  + u,
\]
which implies $g$ is bounded and has compact support. 

\begin{lemma}\label{lUNbSR}
For every fixed $p > 2$ and $\varepsilon \in (0,1 - 2/p)$, define $V$ in \eqref{defvV}. The operator $\partial_x (\Delta + V + 1)^{-1/2}$ is not bounded on $L^p(\mathbb{R}^3;\mathbb{C})$.
\end{lemma}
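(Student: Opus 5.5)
Argue by contradiction, using the explicit test function $u$ and $g=(\Delta+V+1)u$ built just before the statement. Suppose $T:=\partial_x(\Delta+V+1)^{-1/2}$ were bounded on $L^p(\mathbb{R}^3;\mathbb{C})$. The operator $H:=\Delta+V+1$ is the self-adjoint operator associated with the closed form $\int|\nabla w|^2+\int V|w|^2+\int|w|^2$. Since $V\ge0$ and $V\in L^1_{\mathrm{loc}}$, this form has core $C_c^\infty(\mathbb{R}^3)$. Since $H\ge1$, $H^{-1/2}$ is bounded on $L^2$. The plan is to show that $u$ is the form solution $u=H^{-1}g$, and then to write
\[
\partial_x u = T\,(H^{1/2}u) = T\,(H^{-1/2}g).
\]
We then show that $H^{-1/2}g\in L^p$ while $\partial_x u\notin L^p$.

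\textbf{Step 1: identifying $u=H^{-1}g$.} First check that $u$ lies in the form domain. Near the axis $\{x=y=0\}$, the function $v$ equals $1+c\,r^{\varepsilon}+O(r^{2\varepsilon})$ with $r=\sqrt{x^2+y^2}$. Hence $\nabla u = O(r^{\varepsilon-1})$, which is in $L^2_{\mathrm{loc}}$ because $r^{2\varepsilon-2}$ is integrable in two dimensions. Also $V|u|^2\sim r^{\varepsilon-2}$, which is integrable in two dimensions since $\varepsilon>0$. Next check that $(\Delta+V+1)u=g$ holds in the weak (form) sense. This uses $(\Delta+V)v=0$ distributionally, upgraded to the form identity by a cutoff near the axis: the boundary flux $r\,\partial_r v\sim r^{\varepsilon}\to0$, so no delta mass appears. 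Uniqueness of form solutions ($H\ge1$) then gives $u=H^{-1}g$.

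\textbf{Step 2: $H^{-1/2}g\in L^p$.} We have $g\in L^\infty$ with compact support, so $g\in L^1\cap L^\infty$. Since $V\ge0$, the Trotter/Feynman--Kac formula gives domination $|e^{-tH}g|\le e^{-t}e^{t\Delta_{\mathbb{R}^3}}|g|$ pointwise, where $e^{t\Delta_{\mathbb{R}^3}}$ is the ordinary heat semigroup. Writing $H^{-1/2}=\Gamma(1/2)^{-1}\int_0^\infty t^{-1/2}e^{-tH}\,dt$, we obtain $|H^{-1/2}g|\le(1-\Delta_{\mathbb{R}^3})^{-1/2}|g|$, a Bessel potential of an $L^1\cap L^\infty$ function. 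That lies in every $L^q$ with $1\le q\le\infty$, in particular in $L^p$. Under boundedness of $T$ this would give $\partial_x u\in L^p$.

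\textbf{Step 3: $\partial_x u\notin L^p$.} Near the axis, where the cutoffs equal $1$,
\[
\partial_x u = \partial_x v = \frac{c\,\varepsilon\,x}{r^{2-\varepsilon}} + O\!\left(r^{2\varepsilon-1}\right), \qquad c=\frac{1}{\varepsilon^{2}},
\]
so $|\partial_x u|\gtrsim r^{\varepsilon-1}|\cos\theta|$ on a neighbourhood of a segment of the axis. This is not $L^p$ integrable in the $(x,y)$-plane exactly when $(1-\varepsilon)p\ge2$, i.e. $\varepsilon\le1-2/p$, which holds by the choice $\varepsilon\in(0,1-2/p)$. This contradicts Step 2, so $T$ is unbounded.

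\textbf{Main obstacle.} The delicate step is Step 1, namely the rigorous justification that $H^{1/2}u=H^{-1/2}g$, i.e. $u\in D(H)$ with $Hu=g$. This requires the form computation near the singular axis, where the cutoff argument must show the flux term vanishes. The domination in Step 2 is standard, since $V\ge0$.
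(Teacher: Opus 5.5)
Your proposal is correct and is essentially the paper's argument. Assuming $\partial_x(\Delta+V+1)^{-1/2}$ is bounded on $L^p$, you combine it with the $L^p$-boundedness of $(\Delta+V+1)^{-1/2}$ to force $\partial_x u=\partial_x(\Delta+V+1)^{-1}g\in L^p$, which contradicts $|\partial_x u|\sim|x|(x^2+y^2)^{\varepsilon/2-1}$ near the axis; the paper gets that $L^p$-boundedness from the $L^p$-contractivity of the Markovian semigroup $Q_t$, while you get it from the domination $|e^{-t(\Delta+V+1)}g|\le e^{-t}e^{-t\Delta}|g|$, and either suffices. Your Step 1 (checking that $u$ lies in the form domain and that the flux across the singular axis vanishes, so $u=(\Delta+V+1)^{-1}g$) makes explicit an identification the paper uses without comment.
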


\begin{proof}
Note that the following sesquilinear   form is a Dirichlet form:
\begin{align}\label{deftE}
    \tilde{\E}(v,w) :=  \langle \nabla v, \nabla w \rangle_2 + \langle V v,w \rangle_2.
\end{align}
Recall that $\langle f, g \rangle_2 = \int_{\mathbb{R}^3} f \bar{g} $ and $D(\tilde{\E}) := \{v \in L^2(\mathbb{R}^3;\mathbb{C}):  \tilde{\E}(v) :=  \tilde{\E}(v,v) < \infty\}$. In fact, it is easy to prove that it is closed and  if $v \in D(\tilde{\E})$, $\Phi:\mathbb C\to\mathbb C$ is $1$-Lipschitz and $\Phi(0)=0$, then it follows from definition that $v \in W^{1,2}(\mathbb{R}^3;\mathbb{C})$ and by the classical mapping property of the Sobolev spaces (see \cite[Theorem 2.1.11]{Z89}) we have 
$\tilde{\E}(\Phi\circ v)\leq\tilde{\E}(v)$ (see also the argument in the proof of Lemma \ref{mark} by using \eqref{jump22p}). If we use $Q_t$ to denote its associated semigroup, then we know $Q_t$ is a contraction on $L^p(\mathbb{R}^3;\mathbb{C})$ (cf. the argument in \cite[the argument in the proof of Theorem 1.3.3]{D89} together with \cite[Theorem 1.4.1]{FOT11}). Now by the formula:
\begin{align}\label{repres}
(\Delta + V + 1)^{-1/2}f
  =\frac1{\sqrt\pi}\int_0^\infty
       t^{-1/2}e^{-t} \, Q_tf dt,
\end{align}
we obtain that $(\Delta + V + 1)^{-1/2}$ is also bounded in $L^p(\mathbb{R}^3;\mathbb{C})$. Then we argue by contradiction. Assume that the operator $\partial_x (\Delta + V + 1)^{-1/2}$ is also bounded on $L^p(\mathbb{R}^3;\mathbb{C})$, then we have the operator $\partial_x (\Delta + V + 1)^{-1}$ is also bounded on $L^p(\mathbb{R}^3;\mathbb{C})$. However, the function $g$ constructed above has finite $L^p$ norm but 
\[
\partial_x (\Delta + V + 1)^{-1} g = \partial_x u
\]
does not belong to $L^p(\mathbb{R}^3;\mathbb{C})$ since near the origin we have 
\[
|\partial_x u| \sim |x| (x^2 + y^2)^{\varepsilon/2 - 1},
\]
where $A \sim B$ means there exists a constant $C > 0$ such that $C^{-1} A \le B \le C A$. This leads to a contradiction.
\end{proof}

\subsection{Mosco convergence and polynomial approximation}\label{ssMo}

Given a measure space $X = (X,\nu)$, for any Dirichlet form $\E$ on $L^2(X;\mathbb{C})$, we extend the definition of $\E$ on $D(\E) := \{u \in L^2(X;\mathbb{C}): \E(u) := \E(u,u) < \infty\}$ to the whole $L^2(X;\mathbb{C})$ by letting $\E(u) :=\infty$ for $u \notin D(\E)$. Then we have the notion of convergence of Dirichlet forms, which is called Mosco convergence in the literature (see for example \cite{M94}).

\begin{definition}[Mosco convergence]\label{defMo}
We say a Dirichlet form $\E$ on $L^2(X;\mathbb{C})$ is the Mosco limit of a sequence of Dirichlet forms $\{\E_k\}_{k = 1}^\infty$ on $L^2(X;\mathbb{C})$ if the following conditions hold:
\begin{enumerate}[label=\textup{(\roman*)}]
    \item for every $u_k \rightharpoonup u$ (converges weakly) in $L^2(X;\mathbb{C})$, we have
    \[
    \E(u) \le \liminf_{k \to \infty}\E_k(u_k);
    \]
    \item for every $u \in L^2(X;\mathbb{C})$, there exists a sequence $\{u_k\}_{k = 1}^\infty$ that converges to $u$ strongly  in $L^2(X;\mathbb{C})$ such that
    \[
     \E(u) \ge \limsup_{k \to \infty}\E_k(u_k).
    \]
\end{enumerate}
\end{definition}

Now we construct our $\E$ and $\{\E_k\}_{k = 1}^\infty$. To this end, with the function $V$ in \eqref{defvV}, let 
\[
W_k(x,y,z) := \min\{k,V(x,y,z)^\frac{1}{2}\},
\]
which is continuous on $\mathbb{R}^3$. Let $B_k := \{(x,y,z) \in \mathbb{R}^3: x^2 + y^2 + z^2 \le k^2\}$. Then on $B_k$ we can find a real polynomial $P_k$ such that
\begin{align}\label{approxP}
    |P_k - W_k| \le \frac{1}{k}, \qquad \mbox{on} \quad B_k.
\end{align}
From the choice of $P_k$ we know that for almost every point in $\mathbb{R}^3$ we have $P_k^2 \to V$ and the following estimate holds:
\begin{align}\label{approxP2}
    |P_k^2 - W_k^2| \le |P_k - W_k||P_k - W_k + 2 W_k| \le \frac{1}{k}\left( \frac{1}{k} + 2k \right) \le 3, \qquad \mbox{on} \quad B_k，
\end{align}
which implies
\begin{align}\label{approxP3}
0 \le P_k^2 =   |P_k^2| \le 3 + V, \qquad \mbox{on} \quad B_k.
\end{align}

\begin{remark}\label{redP}
If we use $\deg P_k$ to denote the degree of $P_k$ as a polynomial, then we must have $\deg P_k \to \infty$ as $k \to \infty$. In fact, if we define $\mathfrak p_k (x) := P_k(x,0,0)$, we must have $\deg \mathfrak p_k \le \deg P_k$. So it is sufficient to prove that $\deg \mathfrak p_k \to \infty$ as $k \to \infty$. Now for any $k \ge 3$, it follows from \eqref{approxP} that
\[
\mathfrak p_k(0) \ge k - \frac{1}{k} \ge \frac{8k}{9}, \qquad \mathfrak p_k\left( (k/2)^{\frac{2}{\varepsilon - 2}} \right) \le \frac{k}{2} + \frac{1}{k} \le \frac{11k}{18}.
\]
By the mean value theorem, there exists a $\xi \in [0,(k/2)^{2/(\varepsilon - 2)}] \subset [0,1]$ such that
\[
\mathfrak p_k'(\xi) = \frac{\mathfrak p_k\left( (k/2)^{\frac{2}{\varepsilon - 2}} \right) - \mathfrak p_k(0)}{(k/2)^{2/(\varepsilon - 2)} - 0} \le - \frac{5k}{18}   (k/2)^{-2/(\varepsilon - 2)}.
\]
Then Markov's inequality (see \cite[Theorem 5.1.8]{BE95}) gives 
\[
\frac{5k}{18}   (k/2)^{-2/(\varepsilon - 2)} \le \max_{[-1,1]} |\mathfrak p_k'| \le (\deg p_k)^2 \max_{[-1,1]} |\mathfrak p_k| \le (\deg p_k)^2 \frac{10k}{9},
\]
which implies 
\[
\deg p_k \ge \frac{1}{2}  (k/2)^{1/( 2 -\varepsilon)} \to \infty
\]
as $k \to \infty$.
\end{remark}

Now we define 
\begin{align}\label{defEE}
    \E(v,w) :=&  \langle \nabla v, \nabla w \rangle_2 + \langle (V + 1) v,w \rangle_2, \\
    \label{defEEk}
     \E_k(v,w) :=&  \langle \nabla v, \nabla w \rangle_2 + \langle (P_k^2 + 1) v,w \rangle_2.
\end{align}
With the same argument as the one below \eqref{deftE}, we know $\E$ and $\{\E_k\}_{k = 1}^\infty$ are Dirichlet forms on $L^2(\mathbb{R}^3;\mathbb{C})$. 

\begin{lemma}\label{lMoc}
With Dirichlet forms $\E$ and $\{\E_k\}_{k = 1}^\infty$ defined in \eqref{defEE} and \eqref{defEEk} respectively, $\E$ is the Mosco limit of $\{\E_k\}_{k = 1}^\infty$.
\end{lemma}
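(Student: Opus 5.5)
The plan is to verify the two conditions of Definition~\ref{defMo} directly. The only facts used about $P_k$ are the pointwise bounds on $B_k$ coming from \eqref{approxP} and \eqref{approxP3}, together with $P_k^2\to V$ almost everywhere. Outside $B_k$ the polynomial $P_k^2$ is uncontrolled, so every estimate will be localized to a fixed ball, and $k$ will be taken large enough that this ball lies in $B_k$.

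\emph{Liminf inequality (i).} Let $u_k\rightharpoonup u$ in $L^2(\mathbb R^3;\mathbb C)$. After passing to a subsequence realizing the $\liminf$, we may assume $\sup_k\E_k(u_k)<\infty$; otherwise there is nothing to prove.
\begin{enumerate}[label=\textup{(\alph*)}]
\item Since $P_k^2\ge0$, the sequence $\nabla u_k$ is bounded in $L^2$. Hence $u\in W^{1,2}$, and weak lower semicontinuity gives
\[
\|\nabla u\|_2^2\le\liminf_{k}\|\nabla u_k\|_2^2,\qquad \|u\|_2^2\le\liminf_{k}\|u_k\|_2^2 .
\]
\item Since $u_k$ is bounded in $W^{1,2}$, Rellich's theorem gives $u_k\to u$ strongly in $L^2(B_R)$ for every $R$.
\item For the potential term, fix $M,R>0$ and set $h_M:=\min\{M,V^{1/2}\}$. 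For $k\ge\max\{M,R\}$ we have $\min\{M,W_k\}=h_M$. By \eqref{approxP}, $P_k\ge W_k-1/k$ on $B_k$, so on $B_R$
\[
P_k^2\ge (h_M-1/k)_+^2 .
\]
The weights $(h_M-1/k)_+^2$ are bounded by $M^2$ and converge uniformly to $h_M^2$. Combined with (b), this yields
\[
\liminf_k\int P_k^2|u_k|^2\ge \int_{B_R}h_M^2|u|^2 .
\]
\item Letting $M,R\to\infty$ by monotone convergence gives $\int V|u|^2\le\liminf_k\int P_k^2|u_k|^2$.
\end{enumerate}
The three pieces are lower semicontinuous separately, so adding them gives $\E(u)\le\liminf_k\E_k(u_k)$.

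\emph{Limsup inequality (ii).} If $u\notin D(\E)$ there is nothing to prove, so let $u\in D(\E)$, that is, $u\in W^{1,2}$ and $V|u|^2\in L^1$. Taking the constant sequence $u_k=u$ fails, because $\int P_k^2|u|^2$ need not be finite. We therefore cut off first.
\begin{enumerate}[label=\textup{(\alph*)}]
\item Let $\chi\in C_c^\infty(\mathbb R^3)$ satisfy $0\le\chi\le1$ and $\chi=1$ near $0$, and put $\chi_R:=\chi(\cdot/R)$ and $u_R:=\chi_Ru$. Since $|\nabla\chi_R|\le C/R$ and $|u_R|\le|u|$, dominated convergence gives $\E(u_R-u)\to0$, and hence $\E(u_R)\to\E(u)$ and $u_R\to u$ in $L^2$.
\item Fix $R$. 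For $k$ with $\operatorname{supp}u_R\subset B_k$, the bound \eqref{approxP3} gives $P_k^2|u_R|^2\le(3+V)|u|^2$, which is integrable. Since $P_k^2\to V$ almost everywhere, dominated convergence yields $\E_k(u_R)\to\E(u_R)$ as $k\to\infty$.
\item A diagonal choice finishes the argument. Take $R_j\uparrow\infty$ with $\|u_{R_j}-u\|_2<1/j$ and $|\E(u_{R_j})-\E(u)|<1/j$. Then take $k_j$ increasing such that $|\E_k(u_{R_j})-\E(u_{R_j})|<1/j$ for all $k\ge k_j$. Set $u_k:=u_{R_j}$ for $k_j\le k<k_{j+1}$.
\end{enumerate}
This sequence converges strongly to $u$ and satisfies $\limsup_k\E_k(u_k)=\E(u)$.

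The main obstacle is the lack of control of $P_k$ outside $B_k$. In (ii) it is handled by the compactly supported recovery sequence, which avoids any need for a core property. In (i) it is handled by restricting to balls and using Rellich compactness. The only delicate point is that (i) requires strong local $L^2$ convergence, and this comes from the uniform gradient bound, which uses $P_k^2\ge0$.
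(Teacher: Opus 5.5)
Your proof is correct and uses the same strategy as the paper: Rellich compactness and weak lower semicontinuity for (i), and a cutoff recovery sequence with dominated convergence via \eqref{approxP3} for (ii). The differences are only technical: in (i) the paper handles the potential term with a.e.\ convergence and Fatou's lemma (using $P_k^2\to V$ a.e.) rather than your truncated weights $(h_M-1/k)_+^2$; in (ii) it takes $\chi$ supported in $B_1$ and $u_k=\chi(\cdot/k)u$, so the support lies in $B_k$ automatically and your diagonal argument is not needed.
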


\begin{proof}
We begin with the first item. Without loss of generality we can assume that 
\[
\lim_{k \to \infty} \E_k(u_k) \qquad \mbox{has a finite limit},
\]
otherwise we can pick a subsequence to realize the lower limit and there is nothing to prove if the limit is $\infty$. From the definition \eqref{defEEk} we know that $u_k$ is bounded in $W^{1,2}(\mathbb{R}^3;\mathbb{C})$. Since the embedding from $W^{1,2}(\mathbb{R}^3;\mathbb{C})$ to $L^2(B_\ell; \mathbb{C})$ is compact (see for example \cite[Theorem 6.3]{AF03}), we can pick a subsequence (still call $u_k$) such that $u_k$ converges to $u$ almost everywhere on $B_\ell$. By a diagonal argument we can pick a subsequence (still call $u_k$) such that $u_k$ converges to $u$ almost everywhere on $\mathbb{R}^3$.  Pick a subsequence again if necessary we can make $u_k \rightharpoonup u$ also in  $W^{1,2}(\mathbb{R}^3;\mathbb{C})$. Lower semicontinuity of the $W^{1,2}(\mathbb{R}^3;\mathbb{C})$ norm gives
\[
\langle \nabla u, \nabla u \rangle_2 + \langle u,u \rangle_2 \le \liminf_{k \to \infty} (\langle \nabla u_k, \nabla u_k \rangle_2 + \langle u_k,u_k \rangle_2).
\]
On the other hand, Fatou's lemma gives
\[
\langle V u,u \rangle_2 \le \liminf_{k \to \infty} \langle P_k^2 u_k,u_k \rangle_2.
\]
Combining these two inequalities together, we obtain
\[
\E(u) \le  \liminf_{k \to \infty} (\langle \nabla u_k, \nabla u_k \rangle_2 + \langle u_k,u_k \rangle_2) + \liminf_{k \to \infty} \langle P_k^2 u_k,u_k \rangle_2 \le \liminf_{k \to \infty} \E_k(u_k).
\]

We are left to prove the second item. To this end, we fix a cutoff function $\chi \in C_c^\infty(\mathbb{R}^3)$ such that $0 \le \chi \le 1$, $\chi = 1$ on $B_{1/2}$ and $\chi = 0$ outside $B_1$. Let $\chi_k := \chi(\cdot/k)$. For every  $u \in L^2(\mathbb{R}^3;\mathbb{C})$, we can pick $u_k := \chi_k u$. It is clear that $u_k \to u$ in $L^2(\mathbb{R}^3;\mathbb{C})$.  Without loss of generality we can assume $\E(u) < \infty$, otherwise there is nothing to prove.  Then \eqref{approxP3}, together with the dominated convergence theorem, gives 
\[
\lim_{k \to \infty} \langle (P_k^2 + 1) u_k,u_k \rangle_2 = \langle (V + 1) u,u \rangle_2.
\]
Notice that 
\[
\nabla u_k  - \nabla u = u \nabla \chi_k  + (\chi_k - 1) \nabla u.
\]
It is not hard to prove that $\nabla u_k$ converges to $\nabla u$ in $L^2$-norm. Combining these two convergences above, we obtain
\[
     \E(u) = \lim_{k \to \infty}\E_k(u_k).
    \]
\end{proof}

As a corollary, we have the following result. 

\begin{corollary}\label{cstrc}
The operator $(\Delta + P_k^2 + 1)^{-1/2}$ converges to $(\Delta + V + 1)^{-1/2}$ in the strong operator topology of $L^2(\mathbb{R}^3;\mathbb{C})$.
\end{corollary}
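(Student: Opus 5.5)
The plan is to invoke the standard theorem that Mosco convergence of closed quadratic forms is equivalent to strong resolvent convergence of the associated self-adjoint operators, and then pass to the inverse square roots through the functional calculus. Let $A_k:=\Delta+P_k^2+1$ and $A:=\Delta+V+1$ denote the nonnegative self-adjoint operators associated with $\E_k$ and $\E$. Both satisfy $A_k,A\ge 1$. Lemma~\ref{lMoc} together with Mosco's theorem (see \cite{M94}) gives
\[
(A_k+1)^{-1}\to(A+1)^{-1}
\]
strongly in $L^2(\mathbb R^3;\mathbb C)$. Strong resolvent convergence at one point of the resolvent set implies strong convergence $h(A_k)\to h(A)$ for every bounded continuous function $h$ on $[0,\infty)$; this is the standard consequence, e.g.\ Reed--Simon, Theorem VIII.20. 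We apply it to $h(s)=(s+1)^{-1/2}$, or equivalently to $h(s)=\min\{1,s^{-1/2}\}$, which is bounded and continuous and agrees with $s^{-1/2}$ on $[1,\infty)$. Since the spectra of $A_k$ and $A$ lie in $[1,\infty)$, we get $h(A_k)=A_k^{-1/2}$ and $h(A)=A^{-1/2}$, and the conclusion of Corollary~\ref{cstrc} follows.

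To keep the argument self-contained, we could instead use the representation \eqref{repres} in the shifted form
\[
A_k^{-1/2}f=\frac1{\sqrt\pi}\int_0^\infty t^{-1/2}e^{-tA_k}f\,dt .
\]
Because $A_k\ge1$, the integrand is dominated in norm by $t^{-1/2}e^{-t}\|f\|_2$, which is integrable. Strong resolvent convergence gives $e^{-tA_k}f\to e^{-tA}f$ for each fixed $t$, and dominated convergence in $t$ then finishes the argument.

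The only nontrivial input is therefore the passage from Mosco convergence to strong resolvent convergence. If we prove this directly rather than citing it, we fix $f$ and let $w_k=(A_k+1)^{-1}f$, which is the minimizer of $\E_k(w)+\|w\|_2^2-2\operatorname{Re}\langle f,w\rangle_2$. Comparing with $w=0$ shows that $\E_k(w_k)$ and $\|w_k\|_2$ are bounded, so along a subsequence $w_k\rightharpoonup w$ weakly. Condition (i) of Definition~\ref{defMo} gives the liminf inequality for the functional, and the recovery sequences from condition (ii) give the limsup inequality. Together these show that $w$ minimizes the limit functional, so $w=(A+1)^{-1}f$. Evaluating at the minima also gives convergence of the minimal values, which upgrades weak convergence to $\|w_k\|_2\to\|w\|_2$ and hence to strong convergence. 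Uniqueness of the limit then shows that the full sequence converges.

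This upgrade from weak to strong convergence is the step I expect to require the most care, though it is classical.
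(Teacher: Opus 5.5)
Your argument is correct, but your main route differs from the paper's. The paper works directly with the subordination formula \eqref{repres}. It invokes Mosco's result that Mosco convergence gives strong convergence $e^{-t}(Q_k)_t\to e^{-t}Q_t$ \emph{uniformly} for $t$ in compact intervals. It then splits the $t$-integral at a large $T$ and controls the tail by contractivity.

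Your primary route instead passes through strong resolvent convergence and the functional-calculus theorem for bounded continuous functions. This gives the result in one line and applies equally to any such function of the operators.

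Your fallback route is essentially the paper's, with one difference. Dominated convergence in $t$, with majorant $2t^{-1/2}e^{-t}\|f\|_2$, needs only pointwise-in-$t$ convergence of the semigroups rather than uniform convergence. That is a slightly weaker input and removes the $\varepsilon$-bookkeeping.

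Your sketch of Mosco convergence $\Rightarrow$ resolvent convergence is the standard one and works. Convergence of the minimal values gives $\E_k(w_k)+\|w_k\|_2^2\to\E(w)+\|w\|_2^2$. Combined with the two separate lower-semicontinuity inequalities $\E(w)\le\liminf\E_k(w_k)$ and $\|w\|_2^2\le\liminf\|w_k\|_2^2$, this forces $\|w_k\|_2\to\|w\|_2$, and hence strong convergence.

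One slip to fix: you defined $A_k=\Delta+P_k^2+1$, so the choice $h(s)=(s+1)^{-1/2}$ gives $h(A_k)=(\Delta+P_k^2+2)^{-1/2}$. That is not the operator in the statement, so this choice is not ``equivalent'' to $\min\{1,s^{-1/2}\}$. Use only the latter, or apply $(s+1)^{-1/2}$ to $\Delta+P_k^2$, the operator of $\tilde\E_k$.

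Also, the textbook statements are usually phrased with non-real resolvent points. Here this is harmless: $\min\{1,s^{-1/2}\}$ vanishes at infinity and all spectra lie in $[1,\infty)$. So you only need the $C_0$ version of the functional-calculus convergence, which follows from strong convergence of $(A_k+1)^{-1}$ at this single real point by Stone--Weierstrass.
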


\begin{proof}
In the proof we define the Dirichlet form
\begin{align}\label{deftEk}
    \tilde{\E}_k(v,w) :=  \langle \nabla v, \nabla w \rangle_2 + \langle P_k^2 v,w \rangle_2
\end{align}
and we denote its semigroup by $(Q_k)_t$, which is a contraction on $L^2(\mathbb{R}^3;\mathbb{C})$. Recall that $\tilde{\E}$ is given by \eqref{deftE} and $Q_t$ is its semigroup. From our construction \eqref{defEE} and \eqref{defEEk} we know their corresponding semigroups are $e^{-t} \, Q_t$ and $e^{-t} \, (Q_k)_t$ respectively. By using \eqref{repres}, for every $f \in L^2(\mathbb{R}^3;\mathbb{C})$, we have
\begin{align}\label{repres2}
\| (\Delta + P_k^2 + 1)^{-1/2}f - (\Delta + V + 1)^{-1/2}f \|_{2}
  \le \frac1{\sqrt\pi}\int_0^\infty
       t^{-1/2}  e^{-t} \|  (Q_k)_t f-   Q_tf  \|_2dt,
\end{align}
where we used the Minkowski inequality in the last inequality. Then for every $\epsilon > 0$, we first pick $T = T(\epsilon) > 0$ such that
\[
\frac1{\sqrt\pi}\int_T^\infty t^{-1/2}  e^{-t} dt \le \epsilon.
\]
It follows from Lemma \ref{lMoc} and \cite[Corollary 2.6.1]{M94} that the semigroup $e^{-t} \, (Q_k)_t$ converges to $e^{-t} \, Q_t$ in the strong operator topology of $L^2(\mathbb{R}^3;\mathbb{C})$ uniformly on $(0,T]$. As a result, we can pick $k_0$ such that for all $k \ge k_0$, we have 
\[
 \|e^{-t}\,  (Q_k)_t f-  e^{-t}Q_tf  \|_2 \le \frac{\epsilon \sqrt{\pi}}{\sqrt{T}},\qquad \forall \, t \in (0, T].
\]
Then we split the integral in \eqref{repres2}:
\begin{align*}
& \| (\Delta + P_k^2 + 1)^{-1/2}f - (\Delta + V + 1)^{-1/2}f \|_{2} \\
  \le \, &     \frac1{\sqrt\pi}\int_0^T
       t^{-1/2}  \|e^{-t}\,  (Q_k)_t f-  e^{-t}Q_tf  \|_2 dt + \frac1{\sqrt\pi}\int_T^\infty
       t^{-1/2}  e^{-t} \|  (Q_k)_t f-   Q_tf  \|_2dt
       \le  \epsilon +  2 \epsilon \|f\|_2,
\end{align*}
where we have used the fact that both $(Q_k)_t$ and $Q_t$ are contractions. 
\end{proof}

\begin{proposition}\label{pkkey}
For every fixed $p > 2$ and $\varepsilon \in (0,1 - 2/p)$, define $V$ in \eqref{defvV} and $P_k$ in \eqref{approxP}. Then we have
\[
\sup_{k}\|\partial_x (\Delta + P_k^2 + 1)^{-1/2}\|_{L^p(\mathbb{R}^3;\mathbb{C}) \to L^p(\mathbb{R}^3;\mathbb{C})} = \infty.
\]
\end{proposition}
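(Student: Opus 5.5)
We argue by contradiction. Suppose that
\[
M:=\sup_{k}\|\partial_x (\Delta + P_k^2 + 1)^{-1/2}\|_{L^p(\mathbb{R}^3;\mathbb{C}) \to L^p(\mathbb{R}^3;\mathbb{C})}<\infty .
\]
We will show that $\partial_x(\Delta+V+1)^{-1/2}$ is then bounded on $L^p(\mathbb{R}^3;\mathbb{C})$ with norm at most $M$. This contradicts Lemma~\ref{lUNbSR}. The bridge between the approximating operators and the limit operator is Corollary~\ref{cstrc}, combined with weak compactness and lower semicontinuity in $L^p$.

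\emph{Step 1 (limit of the resolvents).} Fix $f\in C_c^\infty(\mathbb{R}^3;\mathbb{C})$, so that $f\in L^2\cap L^p$. Set
\[
h_k:=(\Delta+P_k^2+1)^{-1/2}f,\qquad h:=(\Delta+V+1)^{-1/2}f .
\]
By Corollary~\ref{cstrc}, $h_k\to h$ in $L^2$. Moreover $h_k\in D(\E_k^{1/2})\subset W^{1,2}(\mathbb{R}^3;\mathbb{C})$, so $\partial_x h_k$ is a genuine $L^2$ function. It is bounded in $L^2$, because
\[
\|\partial_x h_k\|_2^2\le \E_k(h_k)=\|f\|_2^2 .
\]
By hypothesis we also have $\|\partial_x h_k\|_p\le M\|f\|_p$.

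\emph{Step 2 (identify the weak limit).} Passing to a subsequence, $\partial_x h_k\rightharpoonup g$ weakly in $L^2$ and weakly in $L^p$, using reflexivity since $1<p<\infty$. Testing against $\varphi\in C_c^\infty$ gives
\[
\int \partial_x h_k\,\bar\varphi=-\int h_k\,\partial_x\bar\varphi\to-\int h\,\partial_x\bar\varphi .
\]
Hence $g=\partial_x h$ in the distributional sense. Since $h\in D(\E^{1/2})\subset W^{1,2}$, this is the classical derivative of $h$. Weak lower semicontinuity of the $L^p$ norm then yields
\[
\|\partial_x(\Delta+V+1)^{-1/2}f\|_p\le M\|f\|_p .
\]

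\emph{Step 3 (density).} This inequality holds on the dense subspace $C_c^\infty$, so it extends to all of $L^p$. More precisely, the operator defined on $L^2\cap L^p$ extends boundedly to $L^p$, which is the sense of boundedness used in Lemma~\ref{lUNbSR}.

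The point needing the most care is that Lemma~\ref{lUNbSR}'s contradiction argument is consistent with this notion of boundedness. That argument passes from $\partial_x(\Delta+V+1)^{-1/2}$ to $\partial_x(\Delta+V+1)^{-1}$ applied to the bounded, compactly supported function $g$. This step requires $(\Delta+V+1)^{-1/2}$ to map $L^2\cap L^p$ into $L^2\cap L^p$, which follows from \eqref{repres} and the $L^p$-contractivity of $Q_t$. The a priori $L^p$ bound from Steps 1--3 on the dense class therefore suffices, and no further obstacle is expected.
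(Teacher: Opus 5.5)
Your proposal is correct and follows essentially the same route as the paper: you assume a uniform bound, use Corollary~\ref{cstrc} to identify the weak $L^p$ limit of $\partial_x(\Delta+P_k^2+1)^{-1/2}f$ as $\partial_x(\Delta+V+1)^{-1/2}f$ in the distributional sense, pass the bound to the limit by weak lower semicontinuity, and extend by density to contradict Lemma~\ref{lUNbSR}. The differences are cosmetic. You take $f\in C_c^\infty$ rather than $f\in L^p\cap L^2$, and you add the $L^2$ bound $\|\partial_x h_k\|_2^2\le\E_k(h_k)=\|f\|_2^2$ together with a consistency remark about Lemma~\ref{lUNbSR}.
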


\begin{proof}
Assume to the contrary that there exists a constant $C_p' > 0$ such that 
\begin{align}\label{conPk}
    \|\partial_x (\Delta + P_k^2 + 1)^{-1/2} f\|_p \le C_p' \|f\|_p,\qquad \forall \, k.
\end{align}
Now fix $f \in L^p(\mathbb{R}^3;\mathbb{C}) \cap L^2(\mathbb{R}^3;\mathbb{C})$, we have $\partial_x (\Delta + P_k^2 + 1)^{-1/2} f$ is bounded in $L^p(\mathbb{R}^3;\mathbb{C})$, which implies (passing to a subsequence if necessary) it converges to some function $g \in L^p(\mathbb{R}^3;\mathbb{C})$ weakly in $L^p(\mathbb{R}^3;\mathbb{C})$ and thus in the distributional sense. Corollary \ref{cstrc} implies that $\partial_x (\Delta + P_k^2 + 1)^{-1/2} f$ converges to $\partial_x (\Delta + V + 1)^{-1/2}f $ in the distributional sense so $g = \partial_x (\Delta + V + 1)^{-1/2}f$. Then by the lower semicontinuity of the $L^p(\mathbb{R}^3;\mathbb{C})$ norm and \eqref{conPk} that 
\[
\|\partial_x (\Delta + V + 1)^{-1/2} f\|_p \le \liminf_{k \to \infty} \|\partial_x (\Delta + P_k^2 + 1)^{-1/2} f\|_p \le C_p' \|f\|_p
\]
holds for all $f \in L^p(\mathbb{R}^3;\mathbb{C}) \cap L^2(\mathbb{R}^3;\mathbb{C})$ thus it can be extended to all $f \in L^p(\mathbb{R}^3;\mathbb{C})$, which leads to a contradiction with Lemma \ref{lUNbSR}.
\end{proof}

\subsection{Going back to Riesz transforms on groups}
\begin{proof}[Proof of Theorem \ref{thmcce}]
Fix $p > 2$. Pick an $\varepsilon \in (0,1 - 2/p)$ and define $V$ in \eqref{defvV} and $P_k$ in \eqref{approxP}.
Now for every $k$ we have the real polynomial $P_k$ on $\mathbb{R}^3$ and we can define the following operators on $L^p(\mathbb{R}^3;\mathbb{C})$
\begin{equation}\label{defAk}
  A_j^k : =\partial_{x_j}\quad (j=1,2,3),
  \qquad A_4^k : =iP_k,
  \qquad A_5^k : =iI,
\end{equation}
where $I$ is the identity operator.
Note that for $j = 1,2,3$, the operator $A_j^k$ is the generator of the translation and for $j = 4,5$, the operator $A_4^k$ and $A_5^k$ are the generator of the multiplication $e^{itP_k}$ and $e^{it}$. As a consequence, their one-parameter groups act isometrically on $L^p(\mathbb{R}^3;\mathbb{C})$ for all $p \in [1,\infty]$. Let $G_k$ be the Lie group of the real Lie algebra generated by $A_k^j, 1 \le j \le 5$. Then we know $G_k$ acts isometrically on $L^p(\mathbb{R}^3;\mathbb{C})$ for all $p \in [1,\infty]$ as well and it is nilpotent. Assume the step of $G_k$ is $s_k$. Let $\G_k$ be the free  Carnot group of  step $s_k$ generated by $A_k^j, 1 \le j \le 5$. By the universal property of free  Carnot groups (see \cite[Definition 14.1.1]{BLU07}), there is a group homomorphism from $\G_k$ to $G_k$.

Recall that the heat semigroup is a contraction semigroup, it follows from \cite[Theorem 2.3]{RS16} (with $a_p\le 2$ and $k = 0$ there) and Proposition \ref{pkkey} that
\[
\sup_{k}\|\Rvec^{\G_k}\|_{L^p(\G_k;\mathbb{C}) \to L^p(\G_k;\mathbb{C}^5)} \ge \frac12 \sup_{k}\|\partial_x (\Delta + P_k^2 + 1)^{-1/2}\|_{L^p(\mathbb{R}^3;\mathbb{C}) \to L^p(\mathbb{R}^3;\mathbb{C})} = \infty.
\]
Since we have 
\[
\|\Rvec^{\G_k}\|_{L^p(\G_k;\mathbb{C})  \to L^p(\G_k;\mathbb{C}^5)} \le 2 \|\Rvec^{\G_k}\|_{L^p(\G_k;\mathbb{R})  \to L^p(\G_k;\mathbb{R}^5)},
\]
we know also 
\[
\sup_{k} \|\Rvec^{\G_k}\|_{L^p(\G_k;\mathbb{R})  \to L^p(\G_k;\mathbb{R}^5)} = \infty.
\]
Picking a subsequence if necessary we prove the theorem.
\end{proof}

\begin{remark}\label{restep}
The step $s_k$ in the proof above is determined by $\deg P_k$. In fact, according to Remark \ref{redP}, we have
\[
s_k = \deg P_k + 1 \to \infty
\]
as $k \to \infty$. 
\end{remark}

\paragraph*{Acknowledgements.}

S.-C. Mao is partially supported by the China Postdoctoral Science Foundation (Grant No.~2026M793367). Y. Wang acknowledges support from
the China Scholarship Council and Ludwig-Maximilians-Universität
München. Y. Zhang has received funding from the European
Research Council (ERC) under the European Union's Horizon 2020 research and innovation programme (grant agreement GEOSUB, No.~945655).

\paragraph*{Declaration of AI Use.}

During the preparation of this manuscript, the authors used OpenAI GPT-5.6 as a brainstorming tool to explore possible approaches and generate preliminary research ideas, and for English language editing and stylistic improvements, including grammar, wording, and readability. The authors reviewed and edited all outputs, independently verified all mathematical arguments and references, and take full responsibility for the content of the publication.

\printbibliography

@article {MWZ26,
    AUTHOR = {Mao, Sheng-Chen and Wang, Yaojun    and Zhang, Ye},
     TITLE = {Dimension-free $L^p$-bounds for the Riesz transform on
{H}-type groups},
   JOURNAL = {In preparation},
      YEAR = {2026},
}

@article {LF04,
    AUTHOR = {Lust-Piquard, Fran\c coise},
     TITLE = {Dimension free estimates for discrete {R}iesz transforms on
              products of abelian groups},
   JOURNAL = {Adv. Math.},
  FJOURNAL = {Advances in Mathematics},
    VOLUME = {185},
      YEAR = {2004},
    NUMBER = {2},
     PAGES = {289--327},
      ISSN = {0001-8708,1090-2082},
   MRCLASS = {42B15 (43A15 43A32 46L55 47A20)},
  MRNUMBER = {2060471},
MRREVIEWER = {Hong-Quan\ Li},
       DOI = {10.1016/j.aim.2003.07.002},
       URL = {https://doi.org/10.1016/j.aim.2003.07.002},
}

@article {DOY06,
    AUTHOR = {Duong, Xuan Thinh and Ouhabaz, El Maati and Yan, Lixin},
     TITLE = {Endpoint estimates for {R}iesz transforms of magnetic
              {S}chr\"odinger operators},
   JOURNAL = {Ark. Mat.},
  FJOURNAL = {Arkiv f\"or Matematik},
    VOLUME = {44},
      YEAR = {2006},
    NUMBER = {2},
     PAGES = {261--275},
      ISSN = {0004-2080,1871-2487},
   MRCLASS = {35Q40 (31B10 42B20 47F05 81Q10)},
  MRNUMBER = {2292721},
MRREVIEWER = {Andrei\ B.\ Bogatyr\"ev},
       DOI = {10.1007/s11512-006-0021-x},
       URL = {https://doi.org/10.1007/s11512-006-0021-x},
}

@article {CD99,
    AUTHOR = {Coulhon, Thierry and Duong, Xuan Thinh},
     TITLE = {Riesz transforms for {$1\leq p\leq 2$}},
   JOURNAL = {Trans. Amer. Math. Soc.},
  FJOURNAL = {Transactions of the American Mathematical Society},
    VOLUME = {351},
      YEAR = {1999},
    NUMBER = {3},
     PAGES = {1151--1169},
      ISSN = {0002-9947,1088-6850},
   MRCLASS = {58G11 (31C12 53C21 60J35)},
  MRNUMBER = {1458299},
MRREVIEWER = {Zhongmin\ Qian},
       DOI = {10.1090/S0002-9947-99-02090-5},
       URL = {https://doi.org/10.1090/S0002-9947-99-02090-5},
}

@book {BE95,
    AUTHOR = {Borwein, Peter and Erd\'elyi, Tam\'as},
     TITLE = {Polynomials and polynomial inequalities},
    SERIES = {Graduate Texts in Mathematics},
    VOLUME = {161},
 PUBLISHER = {Springer-Verlag, New York},
      YEAR = {1995},
     PAGES = {x+480},
      ISBN = {0-387-94509-1},
   MRCLASS = {41-02 (00A07 12-02 26-02 30-02 30C15 41A17)},
  MRNUMBER = {1367960},
MRREVIEWER = {D.\ S.\ Lubinsky},
       DOI = {10.1007/978-1-4612-0793-1},
       URL = {https://doi.org/10.1007/978-1-4612-0793-1},
}

@book {
BLU07,
    AUTHOR = {Bonfiglioli, A. and Lanconelli, E. and Uguzzoni, F.},
     TITLE = {Stratified {L}ie groups and potential theory for their
              sub-{L}aplacians},
    SERIES = {Springer Monographs in Mathematics},
 PUBLISHER = {Springer, Berlin},
      YEAR = {2007},
     PAGES = {xxvi+800},
      ISBN = {978-3-540-71896-3; 3-540-71896-6},
   MRCLASS = {22E30 (31C45 35-02 35H10 43A80)},
  MRNUMBER = {2363343},
MRREVIEWER = {Maria Stella Fanciullo},
}

@article {M94,
    AUTHOR = {Mosco, Umberto},
     TITLE = {Composite media and asymptotic {D}irichlet forms},
   JOURNAL = {J. Funct. Anal.},
  FJOURNAL = {Journal of Functional Analysis},
    VOLUME = {123},
      YEAR = {1994},
    NUMBER = {2},
     PAGES = {368--421},
      ISSN = {0022-1236,1096-0783},
   MRCLASS = {47N20 (31C25 35B27 46N20 47A99 60J99 73B27)},
  MRNUMBER = {1283033},
MRREVIEWER = {Yevgen\ Y.\ Khruslov},
       DOI = {10.1006/jfan.1994.1093},
       URL = {https://doi.org/10.1006/jfan.1994.1093},
}

@article {RS16,
    AUTHOR = {Robinson, Derek W. and Sikora, Adam},
     TITLE = {Gru\v sin operators, {R}iesz transforms and nilpotent {L}ie
              groups},
   JOURNAL = {Math. Z.},
  FJOURNAL = {Mathematische Zeitschrift},
    VOLUME = {282},
      YEAR = {2016},
    NUMBER = {1-2},
     PAGES = {461--472},
      ISSN = {0025-5874,1432-1823},
   MRCLASS = {35J70 (22E25 35A30 35H20 43A65)},
  MRNUMBER = {3448390},
       DOI = {10.1007/s00209-015-1548-y},
       URL = {https://doi.org/10.1007/s00209-015-1548-y},
}

@book {Z89,
    AUTHOR = {Ziemer, William P.},
     TITLE = {Weakly differentiable functions},
    SERIES = {Graduate Texts in Mathematics},
    VOLUME = {120},
      NOTE = {Sobolev spaces and functions of bounded variation},
 PUBLISHER = {Springer-Verlag, New York},
      YEAR = {1989},
     PAGES = {xvi+308},
      ISBN = {0-387-97017-7},
   MRCLASS = {46E35},
  MRNUMBER = {1014685},
MRREVIEWER = {V.\ M.\ Gol\cprime dshte\u in},
       DOI = {10.1007/978-1-4612-1015-3},
       URL = {https://doi.org/10.1007/978-1-4612-1015-3},
}

@book {D89,
    AUTHOR = {Davies, E. B.},
     TITLE = {Heat kernels and spectral theory},
    SERIES = {Cambridge Tracts in Mathematics},
    VOLUME = {92},
 PUBLISHER = {Cambridge University Press, Cambridge},
      YEAR = {1989},
     PAGES = {x+197},
      ISBN = {0-521-36136-2},
   MRCLASS = {35P15 (35J25 35P20 47F05 58G05 58G11 58G25)},
  MRNUMBER = {990239},
MRREVIEWER = {H.\ Triebel},
       DOI = {10.1017/CBO9780511566158},
       URL = {https://doi.org/10.1017/CBO9780511566158},
}

@book {AF03,
    AUTHOR = {Adams, Robert A. and Fournier, John J. F.},
     TITLE = {Sobolev spaces},
    SERIES = {Pure and Applied Mathematics (Amsterdam)},
    VOLUME = {140},
   EDITION = {Second},
 PUBLISHER = {Elsevier/Academic Press, Amsterdam},
      YEAR = {2003},
     PAGES = {xiv+305},
      ISBN = {0-12-044143-8},
   MRCLASS = {46E35 (46-01 46-02 46B70 46Exx)},
  MRNUMBER = {2424078},
}

@book {FOT11,
    AUTHOR = {Fukushima, Masatoshi and Oshima, Yoichi and Takeda, Masayoshi},
     TITLE = {Dirichlet forms and symmetric {M}arkov processes},
    SERIES = {De Gruyter Studies in Mathematics},
    VOLUME = {19},
   EDITION = {extended},
 PUBLISHER = {Walter de Gruyter \& Co., Berlin},
      YEAR = {2011},
     PAGES = {x+489},
      ISBN = {978-3-11-021808-4},
   MRCLASS = {60J25 (28A12 31C45 60F10 60J40 60J45 60J55)},
  MRNUMBER = {2778606},
}

@article {D26,
    AUTHOR = {Dziuba\'nski, Jacek},
     TITLE = {On dimension-free and potential-free estimates for {R}iesz
              transforms associated with {S}chr\"odinger operators},
   JOURNAL = {Nonlinear Anal.},
  FJOURNAL = {Nonlinear Analysis. Theory, Methods \& Applications. An
              International Multidisciplinary Journal},
    VOLUME = {262},
      YEAR = {2026},
     PAGES = {Paper No. 113918, 7},
      ISSN = {0362-546X,1873-5215},
   MRCLASS = {42B20 (35J10 47D08 47G10)},
  MRNUMBER = {4946110},
       DOI = {10.1016/j.na.2025.113918},
       URL = {https://doi.org/10.1016/j.na.2025.113918},
}

@article {S95,
    AUTHOR = {Shen, Zhong Wei},
     TITLE = {{$L^p$} estimates for {S}chr\"odinger operators with certain
              potentials},
   JOURNAL = {Ann. Inst. Fourier (Grenoble)},
  FJOURNAL = {Universit\'e{} de Grenoble. Annales de l'Institut Fourier},
    VOLUME = {45},
      YEAR = {1995},
    NUMBER = {2},
     PAGES = {513--546},
      ISSN = {0373-0956,1777-5310},
   MRCLASS = {35J10 (42B20)},
  MRNUMBER = {1343560},
MRREVIEWER = {V.\ S.\ Rabinovich},
       DOI = {10.5802/aif.1463},
       URL = {https://doi.org/10.5802/aif.1463},
}

@book {LL01,
    AUTHOR = {Lieb, Elliott H. and Loss, Michael},
     TITLE = {Analysis},
    SERIES = {Graduate Studies in Mathematics},
    VOLUME = {14},
   EDITION = {Second},
 PUBLISHER = {American Mathematical Society, Providence, RI},
      YEAR = {2001},
     PAGES = {xxii+346},
      ISBN = {0-8218-2783-9},
   MRCLASS = {00A05 (26-01 28-01 31-01 35J10 42-01)},
  MRNUMBER = {1817225},
       DOI = {10.1090/gsm/014},
       URL = {https://doi.org/10.1090/gsm/014},
}

@article{Al92, 
title={An Application of Homogenization Theory to Harmonic Analysis: Harnack Inequalities And Riesz Transforms on Lie Groups of Polynomial Growth}, 
volume={44}, 
DOI={10.4153/CJM-1992-042-x}, 
number={4}, 
journal={Canadian Journal of Mathematics}, 
author={Alexopoulos, G.}, 
year={1992}, 
pages={691–727}
}

@article{Bar10,
  title={Riesz Transforms on Groups of Heisenberg Type.},
  author={Barbas, Helena},
  journal={Journal of Geometric Analysis},
  volume={20},
  number={1},
  pages={1--38},
  year={2010}
}

@article{BO15,
  title={Sharp martingale inequalities and applications to Riesz transforms on manifolds, Lie groups and Gauss space},
  author={Ba{\~n}uelos, Rodrigo and Os{\k{e}}kowski, Adam},
  journal={Journal of Functional Analysis},
  volume={269},
  number={6},
  pages={1652--1713},
  year={2015},
  publisher={Elsevier}
}

@article{CMZ96,
  title={About Riesz transforms on the Heisenberg groups},
  author={Coulhon, Thierry and M{\"u}ller, Detlef and Zienkiewicz, Jacek},
  journal={Mathematische Annalen},
  volume={305},
  number={1},
  pages={369--379},
  year={1996},
  publisher={Springer}
}

@article{Fol75,
  title={Subelliptic estimates and function spaces on nilpotent Lie groups},
  author={Folland, Gerald B},
  journal={Arkiv f{\"o}r matematik},
  volume={13},
  number={1},
  pages={161--207},
  year={1975},
  publisher={Kluwer Academic Publishers Dordrecht}
}

@book{FS82,
    AUTHOR = {Folland, G. B. and Stein, Elias M.},
     TITLE = {Hardy spaces on homogeneous groups},
    SERIES = {Mathematical Notes},
    VOLUME = {28},
 PUBLISHER = {Princeton University Press, Princeton, NJ; University of Tokyo
              Press, Tokyo},
      YEAR = {1982},
     PAGES = {xii+285},
      ISBN = {0-691-08310-X},
   MRCLASS = {43A85 (22E45 42B30)},
  MRNUMBER = {657581},
MRREVIEWER = {Daryl\ Geller},
}

@incollection{Gar16,
  title={Hypoelliptic operators and some aspects of analysis and geometry of sub-Riemannian spaces},
  author={Garofalo, Nicola},
  booktitle={Geometry, analysis and dynamics on sub-Riemannian manifolds},
  pages={123--257},
  year={2016},
  publisher={European Mathematical Society-EMS-Publishing House GmbH}
}

@article{GR18,
  title={Partial balayage on Riemannian manifolds},
  author={Gustafsson, Bj{\"o}rn and Roos, Joakim},
  journal={Journal de Math{\'e}matiques Pures et Appliqu{\'e}es},
  volume={118},
  pages={82--127},
  year={2018},
  publisher={Elsevier}
}

@article{Jan04,
  title={Weak-type estimates for singular integrals and the Riesz transform},
  author={Janakiraman, Prabhu},
  journal={Indiana University Mathematics Journal},
  pages={533--555},
  year={2004},
  publisher={JSTOR}
}

@book{KS00,
  title={An introduction to variational inequalities and their applications},
  author={Kinderlehrer, David and Stampacchia, Guido},
  year={2000},
  publisher={SIAM}
}

@article{Kli21,
  title={Quasi-regular Dirichlet forms and the obstacle problem for elliptic equations with measure data},
  author={Tomasz Klimsiak},
  journal={Studia Mathematica},
  volume={258},
  pages={121-156},
  year={2021}
}

@article{LP04,
  title={Riesz transforms on generalized Heisenberg groups and Riesz transforms associated to the CCR heat flow},
  author={Lust-Piquard, Fran{\c{c}}oise},
  journal={Publicacions Matem{\`a}tiques},
  pages={309--333},
  year={2004},
  publisher={JSTOR}
}

@misc{OSS26,
      title={A dimension-free weak-type $(1,1)$ bound for the vector Riesz transform on $\mathbb{R}^n$}, 
      author={Yuyuan Ouyang and Daniel Spector and Cody B. Stockdale},
      year={2026},
      eprint={2608.18068},
      archivePrefix={arXiv},
      primaryClass={math.CA},
      url={https://arxiv.org/abs/2608.18068}, 
}

@article{SS21,
  title={On the dimensional weak-type (1, 1) bound for Riesz transforms},
  author={Spector, Daniel and Stockdale, Cody B},
  journal={Communications in Contemporary Mathematics},
  volume={23},
  number={07},
  pages={2050072},
  year={2021},
  publisher={World Scientific}
}

@article{Ste83,
  title={Some results in harmonic analysis in {${\bf R}^{n}$}, for
              {$n\rightarrow \infty $}},
  author={Stein, Elias M},
  journal={Bulletin of the American Mathematical Society},
  volume={9},
  number={1},
  pages={71--73},
  year={1983}
}

@inproceedings{St86,
  title={Problems in harmonic analysis related to curvature and oscillatory integrals},
  author={Stein, Elias M},
  booktitle={Proceedings of the International Congress of mathematicians},
  volume={1},
  number={2},
  pages={196--221},
  year={1986},
  organization={Berkeley Calif.}
}

@article{SV13,
  title={Lewy--Stampacchia type estimates for variational inequalities driven by (non) local operators},
  author={Servadei, Raffaella and Valdinoci, Enrico},
  journal={Revista Matem{\'a}tica Iberoamericana},
  volume={29},
  number={3},
  pages={1091--1126},
  year={2013}
}

@book{VSC92,
  title={Analysis and geometry on groups},
  author={Varopoulos, Nicholas Th and Saloff-Coste, Laurent and Coulhon, Thierry},
  year={1992},
  publisher={Cambridge University Press}
}

@article {PV13,
    AUTHOR = {Pinamonti, Andrea and Valdinoci, Enrico},
     TITLE = {A {L}ewy-{S}tampacchia estimate for variational inequalities
              in the {H}eisenberg group},
   JOURNAL = {Rend. Istit. Mat. Univ. Trieste},
  FJOURNAL = {Rendiconti dell'Istituto di Matematica dell'Universit\`a di
              Trieste. An International Journal of Mathematics},
    VOLUME = {45},
      YEAR = {2013},
     PAGES = {23--45},
      ISSN = {0049-4704,2464-8728},
   MRCLASS = {35R03 (35H20 49J40)},
  MRNUMBER = {3168296},
}

@article {GRS22,
    AUTHOR = {Goel, Divya and R\u{a}dulescu, Vicen\c{t}iu D. and Sreenadh,
              Konijeti},
     TITLE = {Variational framework and {L}ewy-{S}tampacchia type estimates
              for nonlocal operators on {H}eisenberg group},
   JOURNAL = {Ann. Fenn. Math.},
  FJOURNAL = {Annales Fennici Mathematici},
    VOLUME = {47},
      YEAR = {2022},
    NUMBER = {2},
     PAGES = {707--721},
      ISSN = {2737-0690,2737-114X},
   MRCLASS = {35R03 (49J40)},
  MRNUMBER = {4412512},
MRREVIEWER = {Qinglong\ Zhang},
       DOI = {10.54330/afm.116794},
       URL = {https://doi.org/10.54330/afm.116794},
}

\medskip
\mbox{}\\
Sheng-Chen Mao\\
School of Mathematics and Statistics\\
Lanzhou University\\
No.~222 Tianshui South Road\\
Lanzhou 730000, P.R. China\\
Email: \href{mailto:maoshengchen@lzu.edu.cn}{maoshengchen@lzu.edu.cn}
\quad or \quad
\href{mailto:maosci@163.com}{maosci@163.com}

\bigskip\noindent
Yaojun Wang \\
Mathematisches Institut \\
Ludwig-Maximilians-Universit\"at M\"unchen \\
Theresienstr. 39 \\
80333 M\"unchen, Germany \\
Email: \href{yaojun.wang@math.lmu.de}{yaojun.wang@math.lmu.de}

\medskip
\mbox{}\\
Ye Zhang (\textit{Corresponding author})\\
SISSA\\
via Bonomea 265\\
34136 Trieste, Italy\\
Email: \href{mailto:yezhang@sissa.it}{yezhang@sissa.it}
\quad or \quad
\href{mailto:zhangye0217@gmail.com}{zhangye0217@gmail.com}

\end{document}